\documentclass[10pt,reqno]{amsart}
\usepackage{graphicx}
\usepackage{amsmath, amssymb}
\usepackage{color}
\usepackage{url}
\textwidth=14.5cm
\oddsidemargin=0.8cm
\evensidemargin=0.8cm
\topmargin=1cm

\newtheorem{theorem}{Theorem}[section]
\newtheorem{proposition}[theorem]{Proposition} \newtheorem{corollary}[theorem]{Corollary}
\newtheorem{lemma}[theorem]{Lemma}
\newtheorem{remark}[theorem]{Remark}

\newtheorem{definition}[theorem]{Definition}

\begin{document}

\title[Matrix liberation process I]{Matrix liberation process \\ \Small{I: Large deviation upper bound and almost sure convergence}}
\author{Yoshimichi Ueda}
\address{
Graduate School of Mathematics, 
Nagoya University, 
Furocho, Chikusaku, Nagoya, 464-8602, Japan
}
\email{ueda@math.nagoya-u.ac.jp}
\thanks{Supported by Grant-in-Aid for Challenging Exploratory Research 16K13762.}
\subjclass[2010]{60F10; 15B52; 46L54.}
\keywords{Random matrix; Stochastic process; Unitary Brownian motion; Large deviation; Large N limit; Free probability}
\date{Feb.\ 15th, 2018}
\begin{abstract}
We introduce the concept of matrix liberation process, a random matrix counterpart of the liberation process in free probability, and prove a large deviation upper bound for its empirical distribution and several properties on its rate function. As a simple consequence we obtain the almost sure convergence of the empirical distribution of the matrix liberation process to that of the corresponding liberation process \emph{as continuous processes} in the large $N$ limit.
\end{abstract}
\maketitle

\allowdisplaybreaks{

\section{Introduction}

Let $M_N(\mathbb{C})^\mathrm{sa}$ be all the $N\times N$ self-adjoint matrices endowed with the natural inner product $\langle A, B\rangle_\mathrm{HS} := \mathrm{Tr}_N(AB)$, and it has the following natural orthogonal basis: 
$$
C_{\alpha \beta} := 
\begin{cases} 
\frac{1}{\sqrt{2}}(E_{\alpha \beta} + E_{\beta \alpha}) & (1 \leq \alpha < \beta \leq N), \\
\qquad\quad E_{\alpha\alpha} & (1 \leq \alpha = \beta \leq N), \\
\frac{\mathrm{i}}{\sqrt{2}}(E_{\alpha\beta}-E_{\beta\alpha}) & (1 \leq \beta < \alpha \leq N). 
\end{cases}
$$    
Here, $\mathrm{Tr}_N$ stands for the non-normalized trace (i.e., $\mathrm{Tr}_N(I_N) = N$ with the identity matrix $I_N$) and the $E_{\alpha\beta}$ are $N\times N$ standard matrix units. Using these inner product and orthogonal basis we identify $M_N(\mathbb{C})^\mathrm{sa}$ with the $N^2$-dimensional Euclidean space $\mathbb{R}^{N^2}$, when we use usual stochastic analysis tools on Euclidean spaces. Choose the $nN^2$-dimensional standard Brownian motion $B_{\alpha\beta}^{(i)}$, $1 \leq \alpha,\beta \leq N$, $1 \leq i \leq n$ with natural filtration $\mathcal{F}_t$, and define 
$$
H_N^{(i)}(t) := 
\sum_{\alpha,\beta=1}^N \frac{B^{(i)}_{\alpha\beta}(t)}{\sqrt{N}}C_{\alpha\beta}, \quad t\geq 0, \quad 1 \leq i \leq n, 
$$
which are called the \emph{$n$ independent $N\times N$ self-adjoint matrix Brownian motions} on $M_N(\mathbb{C})^\mathrm{sa}$. The stochastic differential equation (SDE in short)
$$
\mathrm{d}U^{(i)}_N(t) = \mathrm{i}\,\mathrm{d}H^{(i)}_N(t)\,U^{(i)}_N(t) - \frac{1}{2}U^{(i)}_N(t)\,\mathrm{d}t \quad \text{with} \quad U^{(i)}_N(0) = I_N, \quad 1 \leq i \leq n,  
$$
defines unique $n$ independent diffusion processes $U^{(i)}_N$, $1 \leq i \leq n$, on the $N\times N$ unitary group $\mathrm{U}(N)$, which are called the \emph{$n$ independent $N\times N$ left unitary Brownian motions}. It is known, see e.g., \cite[Lemma 1.4(2)]{Levy:arXiv:1112.2452v2} and its proof, that they satisfy the so-called left increment property, that is, the $U_N^{(i)}(t)U_N^{(i)}(s)^*$, $t \geq s$, are independent of $\mathcal{F}_s$ and has the same distribution as that of $U_N^{(i)}(t-s)$. This property plays a crucial role throughout this article. 

\medskip
For each $1 \leq i \leq n+1$, an $r(i)$-tuple $\Xi_i(N) = (\xi_{ij}(N))_{j=1}^{r(i)}$ of $N\times N$ self-adjoint matrices is given. \emph{Throughout this article, we assume that the given sequence $\Xi(N) := (\Xi_i(N))_{i=1}^{n+1}$ are operator-norm bounded, that is, $\Vert\xi_{ij}(N)\Vert \leq R$ with some constant $R > 0$, and has a limit joint distribution $\sigma_0$ as $N\to\infty$.} See section 2, item {\bf 3} for its precise formulation of $\sigma_0$. Here we introduce the \emph{$N\times N$ matrix liberation process} starting at $\Xi(N)$ as the multi-matrix-valued process 
\begin{align*}
&t \mapsto \Xi^\mathrm{lib}(N)(t) = \big(\Xi_i^\mathrm{lib}(N)(t)\big)_{i=1}^{n+1} = \big((\xi_{ij}^\mathrm{lib}(N)(t))_{j=1}^{r(i)}\big)_{i=1}^{n+1} \\
&\qquad\qquad\qquad\qquad\text{with} 
\quad
\xi_{ij}^\mathrm{lib}(N)(t) := 
\begin{cases} 
U_N^{(i)}(t)\xi_{ij}(N)U_N^{(i)}(t)^* & (1 \leq i \leq n), \\
\qquad \xi_{n+1\,j}(N) & (i=n+1).
\end{cases}
\end{align*}
We emphasize that the matrix liberation process $\Xi^\mathrm{lib}(N)$ is new in random matrix theory and also that each $\Xi_i^\mathrm{lib}(N)$ is a constant process in distribution, that is, its empirical distribution is independent of time, but the whole family $\Xi^\mathrm{lib}(N)$ creates really non-commutative phenomena. 

\medskip
The concept of matrix liberation process comes from the liberation process in free probability defined as follows. Let $(\mathcal{M},\tau)$ be a tracial $W^*$-probability space, and $\mathcal{A}_i \subset \mathcal{M}$, $1 \leq i \leq n+1$, be unital $*$-subalgebras (possibly to be $W^*$-subalgebras). Let $v_i$, $1 \leq i \leq n$, be $n$ freely independent, left free unitary Brownian motions (\cite{Biane:Fields97}) in $(\mathcal{M},\tau)$ with $v_i(0) = 1$, which are ($*$-)freely independent of the $\mathcal{A}_i$. Then the family consisting of $\mathcal{A}_i(t) := v_i(t)\mathcal{A}_i v_i(t)^*$, $1 \leq i \leq n$, and $\mathcal{A}_{n+1}(t) := \mathcal{A}_{n+1}$ converges (in distribution or in moments) to a family of freely independent copies of $\mathcal{A}_i$ as $t \to \infty$. Following Voiculescu \cite{Voiculescu:AdvMath99}, we call this `algebra-valued process' $t \mapsto (\mathcal{A}_i(t))_{i=1}^{n+1}$ the \emph{liberation process starting at $(\mathcal{A}_i)_{i=1}^{n+1}$}. The matrix liberation process $\Xi^\mathrm{lib}(N)$ is a natural random matrix model of the liberation process. The attempt of investigating the matrix liberation process $\Xi^\mathrm{lib}(N)$ is quite natural, because independent large random matrices are typical sources of free independence thanks to the celebrated work of Voiculescu \cite{Voiculescu:InventMath91} on one hand and because, on the other hand, the concept of free independence is central in free probability theory and the liberation process is a `stochastic interpolation' between a given statistical relation and the freely independent one in the free probability framework. 

\medskip
The purpose of this article is to take a first step towards systematic study of the matrix liberation process $\Xi^\mathrm{lib}(N)$ (rather than the unitary Brownian motions $U_N^{(i)}$) with the hope of providing a basis for the study of liberation process and free independence in view of random matrices. Here we take a large deviation phenomenon for its empirical distribution, say $\tau_{\Xi^\mathrm{lib}(N)}$, (see section 2, item {\bf 2} for its formulation) as $N \to \infty$, and actually prove a large deviation upper bound in scale $1/N^2$ as $N \to \infty$. The reader may think that a possible approach is to obtain a large deviation upper bound for the $U_N^{(i)}$ at first and then to use the contraction principle. However, we do not employ such an approach, because we try to find the resulting formula of rate function in as direct a fashion as possible. In fact, the rate function that we will find is constructed by using a certain derivation that is similar to Voiculescu's one in his liberation theory and shown to be good and to have a unique minimizer, which is identified with the empirical distribution $\sigma_0^\mathrm{lib}$ of the liberation process starting at the distribution $\sigma_0$ (see section 2, item {\bf 3} for its precise formulation). Hence the standard Borel--Cantelli argument shows that $\tau_{\Xi^\mathrm{lib}(N)} \rightarrow \sigma_0^\mathrm{lib}$ in the topology of weak convergence uniformly on finite time intervals almost surely as $N\to\infty$. (See the end of the next section for several previously known related results.)

\medskip
Let us take a closer look at the contents of this article. Section 2 is concerned with the framework to capture empirical distributions $\tau_{\Xi^\mathrm{lib}(N)}$ and $\sigma_0^\mathrm{lib}$ in terms of $C^*$-algebras. We emphasize that the $C^*$-algebra language is not avoidable if one wants to discuss the appropriate topology on the space of empirical distributions of non-commutative processes, because $C^*$-algebras are only appropriate, non-commutative counterparts of the spaces of continuous functions over topological spaces. Hence section 2 is just a collection of formulations for several concepts, but important to understand this article. 

We employ the strategy of the celebrated work on independent $N\times N$ self-adjoint Brownian motions due to Biane, Capitaine and Guionnet \cite{BianeCapitaineGuionnet:InventMath03} (also see \cite[part VI, section 18]{Guionnet:LNM}). Namely, we use the exponential martingale of the martingale 
\begin{equation}\label{Eq1}
t \mapsto \mathbb{E}\big[\mathrm{tr}_N(P(\xi_{\bullet\diamond}^\mathrm{lib}(N)(\cdot)))\mid\mathcal{F}_t\big] - \mathbb{E}\big[\mathrm{tr}_N(P(\xi_{\bullet\diamond}^\mathrm{lib}(N)(\cdot)))\big]
\end{equation}
with $\mathrm{tr}_N := \frac{1}{N}\mathrm{Tr}_N$ for any self-adjoint non-commutative polynomial $P$ in indeterminates $x_{ij}(t)$, $1 \leq i \leq n+1$, $1 \leq j \leq r(i)$ and $t \geq 0$, where $P(\xi_{\bullet\diamond}^\mathrm{lib}(N)(\cdot))$ denotes the substitution of $\xi_{ij}^\mathrm{lib}(N)(t)$ for each $x_{ij}(t)$ into the polynomial $P$. Thus we need to compute the resulting exponential martingale by giving the explicit formula of the quadratic variation of the martingale \eqref{Eq1}. This is done in section 3 by utilizing the Clark--Ocone formula in Malliavin calculus. This is similar to \cite{BianeCapitaineGuionnet:InventMath03}, but we need some standard technology on SDEs in the framework of Malliavin calculus (e.g., \cite[chapter 2]{Nualart:Book06}). The key of section 3 is the introduction of a suitable non-commutative derivation, whose formula is not exactly same as but similar to the derivation in Voiculescu's free mutual information \cite{Voiculescu:AdvMath99}. This new derivation will further be investigated elsewhere.  

The resulting quadratic variation involves the conditional expectation with respect to the filtration $\mathcal{F}_t$, and hence we need to investigate its large $N$ limit in the time uniform fashion. This rather technical issue is the theme of section 4, and the proof of the main result there is divided into two parts: We first describe the desired large $N$ limit at each time, and then prove that the convergence is actually uniform in time. In the first part we use the known convergence results on standard Gaussian self-adjoint random matrices, while in the second part the use of Thierry L\'{e}vy's method \cite{Levy:arXiv:1112.2452v2} combining combinatorial techniques with the famous It\^{o} formula is crucial.  

The rest of the discussion goes along a standard strategy in the large deviation theory for hydrodynamics. Namely, we need to prove the exponential tightness of the probability measures in question, and introduce a suitable good rate function by looking at the quadratic variation computed in section 3. These together with proving the large deviation upper bound are done in section 5. In the same section we give a few important properties on the rate function including the fact that $\sigma_0^\mathrm{lib}$ is its unique minimizer, and obtain the almost sure convergence of the empirical distribution $\tau_{\Xi^\mathrm{lib}(N)}$ as continuous processes. The final section 6 is a brief discussion on one of our on-going works in this direction.  

\section{Empirical distributions of (matrix) liberation processes} 

This section is devoted to a natural framework to capture the empirical distributions of (matrix) liberation processes.

\medskip
Let $\mathbb{C}\big\langle x_{\bullet\diamond}(\cdot) \big\rangle := \mathbb{C}\big\langle \{x_{ij}(t)\}_{1 \leq j \leq r(i), 1\leq i \leq n+1, t \geq 0} \big\rangle$ be the universal unital $*$-algebra with subject to $x_{ij}(t)=x_{ij}(t)^*$. We enlarge it to the universal enveloping $C^*$-algebra $C^*_R\big\langle x_{\bullet\diamond}(\cdot)\big\rangle$ with subject to $\Vert x_{ij}(t)\Vert \leq R$. Let $TS\big(C^*_R\big\langle x_{\bullet\diamond}(\cdot)\big\rangle\big)$ be all the tracial states on $C^*_R\big\langle x_{\bullet\diamond}(\cdot)\big\rangle$. We denote by $TS^c\big(C^*_R\big\langle x_{\bullet\diamond}(\cdot)\big\rangle\big)$ the set of $\tau \in TS\big(C^*_R\big\langle x_{\bullet\diamond}(\cdot)\big\rangle\big)$ such that 
$$
t  \mapsto x_{ij}^\tau(t) := \pi_\tau(x_{ij}(t)) \in \pi_\tau\big(\big(C^*_R\big\langle x_{\bullet\diamond}(\cdot)\big\rangle\big)\big) \curvearrowright \mathcal{H}_\tau
$$ 
define strong-operator continuous processes, where $\pi_\tau : C^*_R\big\langle x_{\bullet\diamond}(\cdot)\big\rangle \to B(\mathcal{H}_\tau)$ denotes the GNS representation associated with $\tau$ and the natural lifting of $\tau$ to $\pi_\tau\big(\big(C^*_R\big\langle x_{\bullet\diamond}(\cdot)\big\rangle\big)\big)''$ (the closure in the strong-operator topology) in $ \mathcal{H}_\tau$ is still denoted by the same symbol $\tau$. 

\begin{lemma}\label{L2.1} For any $\tau \in TS\big(C^*_R\big\langle x_{\bullet\diamond}(\cdot)\big\rangle\big)$ the following are equivalent: 
\begin{itemize} 
\item[(1)] $\tau \in TS^c\big(C^*_R\big\langle x_{\bullet\diamond}(\cdot)\big\rangle\big)$. 
\item[(2)] For every $\ell \in \mathbb{N}$ and any possible pairs $(i_1,j_1),\dots,(i_\ell,j_\ell)$ the function 
$$
(t_1,\dots,t_\ell) \in [0,+\infty)^\ell \mapsto 
\tau(x_{i_1 j_1}(t_1)\cdots x_{i_\ell j_\ell}(t_\ell)) \in \mathbb{C}
$$
is continuous. 
\end{itemize}
\end{lemma}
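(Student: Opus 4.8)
The plan is to prove the two implications separately, using throughout three soft facts: the GNS Hilbert space $\mathcal{H}_\tau$ carries a cyclic trace vector $\Omega$ with $\tau(\pi_\tau(y)) = \langle \pi_\tau(y)\Omega,\Omega\rangle$ for all $y$; each $\pi_\tau(x_{ij}(t))$ has operator norm at most $R$, so the family $\{x_{ij}^\tau(t)\}_{t\ge 0}$ is uniformly bounded; and $[0,+\infty)^\ell$ is metrizable, so sequential continuity suffices everywhere.

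For (1) $\Rightarrow$ (2), I would first recall that multiplication is jointly strong-operator continuous on norm-bounded sets: if $A^{(k)}_m \to A^{(k)}$ strongly with $\sup_{m,k}\Vert A^{(k)}_m\Vert < \infty$, then $A^{(1)}_m\cdots A^{(\ell)}_m \to A^{(1)}\cdots A^{(\ell)}$ strongly, which follows from the telescoping identity $A_m B_m - AB = A_m(B_m-B) + (A_m-A)B$ and induction on $\ell$. Given (1), for any sequences $t^{(k)}_m \to t_k$ ($1\le k\le\ell$) we have $x_{i_k j_k}^\tau(t^{(k)}_m) \to x_{i_k j_k}^\tau(t_k)$ strongly with all norms $\le R$; hence $x_{i_1 j_1}^\tau(t^{(1)}_m)\cdots x_{i_\ell j_\ell}^\tau(t^{(\ell)}_m) \to x_{i_1 j_1}^\tau(t_1)\cdots x_{i_\ell j_\ell}^\tau(t_\ell)$ strongly, and applying the vector functional $\langle\,\cdot\,\Omega,\Omega\rangle = \tau$ yields convergence of the joint moments, i.e.\ (2).

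For (2) $\Rightarrow$ (1), fix $(i,j)$ and $\eta\in\mathcal{H}_\tau$; the goal is norm-continuity of $t\mapsto x_{ij}^\tau(t)\eta$. Since $\Vert x_{ij}^\tau(t)\Vert\le R$ uniformly in $t$, an $\varepsilon/3$ argument reduces this to $\eta$ ranging over the dense subspace $\pi_\tau(\mathbb{C}\langle x_{\bullet\diamond}(\cdot)\rangle)\Omega$, say $\eta = \pi_\tau(a)\Omega$ with $a$ a $*$-polynomial. For $s,t\ge 0$, self-adjointness of $x_{ij}(s)-x_{ij}(t)$ gives
\[
\big\Vert x_{ij}^\tau(s)\eta - x_{ij}^\tau(t)\eta\big\Vert^2 = \tau\big(a^*(x_{ij}(s)-x_{ij}(t))^2 a\big),
\]
and I would expand the right-hand side into the four terms $\tau(a^* x_{ij}(s)^2 a)$, $-\tau(a^* x_{ij}(s)x_{ij}(t) a)$, $-\tau(a^* x_{ij}(t)x_{ij}(s) a)$, $\tau(a^* x_{ij}(t)^2 a)$. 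After expanding the polynomial $a$ into words, each of these is a finite linear combination of joint moment functions of exactly the type in (2), evaluated at argument tuples all but (at most) two of whose entries are the fixed times occurring in $a$. By (2) each of the four terms therefore converges to $\tau(a^* x_{ij}(t)^2 a)$ as $s\to t$, so their alternating sum tends to $0$; this is the desired continuity. (Alternatively one may note that $s\mapsto x_{ij}^\tau(s)\eta$ is weakly continuous by testing against $\pi_\tau(b)\Omega$ via (2), while $s\mapsto\Vert x_{ij}^\tau(s)\eta\Vert^2 = \tau(a^* x_{ij}(s)^2 a)$ is continuous, and weak continuity together with continuity of the norm forces norm continuity.)

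I expect the only mildly delicate point to be the bookkeeping in (2) $\Rightarrow$ (1): one must observe that freezing the times inside $a$ turns each word of $a^* x_{ij}(s)x_{ij}(t) a$ into a section of one of the continuous functions granted by (2), and that finite linear combinations and the substitution $s=s_m\to t$ preserve continuity. This is routine but is precisely where hypothesis (2) is actually used; everything else — metrizability of the time parameter, joint strong continuity of multiplication on bounded sets, density of polynomials, and uniform boundedness of $\{x_{ij}^\tau(t)\}$ — is standard functional analysis.
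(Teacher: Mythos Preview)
Your argument is correct and follows essentially the same route as the paper: for $(2)\Rightarrow(1)$ you compute $\Vert(x_{ij}^\tau(s)-x_{ij}^\tau(t))\pi_\tau(a)\Omega\Vert^2$ on a dense set of vectors, expand into four moment terms, and invoke (2) exactly as the paper does; your treatment of $(1)\Rightarrow(2)$ simply spells out what the paper dismisses as ``trivial, since $\Vert x_{ij}(t)\Vert\le R$''.
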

\begin{proof} (1) $\Rightarrow$ (2) is trivial, since $\Vert x_{ij}(t) \Vert \leq R$. 

(2) $\Rightarrow$ (1): For any monomial $P = x_{i_1 j_1}(t_1)\cdots x_{i_\ell j_\ell}(t_\ell)$ one has, by assumption,  
\begin{align*}
&\Vert(x_{ij}^\tau(t) - x_{ij}^\tau(s))\Lambda_\tau(P)\Vert_{\mathcal{H}_\tau}^2 \\
&= 
\tau(x_{i_\ell j_\ell}(t_\ell)\cdots x_{i_1 j_1}(t_1)x_{ij}(t)^2 x_{i_1 j_1}(t_1)\cdots x_{i_\ell j_\ell}(t_\ell)) \\
&\quad-
\tau(x_{i_\ell j_\ell}(t_\ell)\cdots x_{i_1 j_1}(t_1)x_{ij}(t)x_{ij}(s) x_{i_1 j_1}(t_1)\cdots x_{i_\ell j_\ell}(t_\ell)) \\
&\quad-
\tau(x_{i_\ell j_\ell}(t_\ell)\cdots x_{i_1 j_1}(t_1)x_{ij}(s)x_{ij}(t) x_{i_1 j_1}(t_1)\cdots x_{i_\ell j_\ell}(t_\ell)) \\
&\quad+
\tau(x_{i_\ell j_\ell}(t_\ell)\cdots x_{i_1 j_1}(t_1)x_{ij}(s)^2 x_{i_1 j_1}(t_1)\cdots x_{i_\ell j_\ell}(t_\ell)) \\
&\to 0 \quad (\text{as $t \to s$}),  
\end{align*} 
where $\Lambda_\tau : C^*_R\big\langle x_{ij}(\cdot)\big\rangle \to \mathcal{H}_\tau$ denotes the canonical map. Since $\Vert x_{ij}^\tau(t) \Vert \leq \Vert x_{ij}(t) \Vert \leq R$ as above, we conclude that $t \mapsto x_{ij}^\tau(t)$ is strong-operator continuous. 
\end{proof} 

Let $\mathcal{W}_\ell$ be the words of length $\ell$ in indeterminates $x_{ij} = x_{ij}^*$, $1 \leq i \leq n+1$, $1 \leq j \leq r(i)$. For each $w \in \mathcal{W}_\ell$ we denote by $w(t_1,\dots,t_\ell)$ the substitution of $x_{i_k j_k}(t_k)$ for $x_{i_k j_k}$ into $w = x_{i_1 j_1}\cdots x_{i_\ell j_\ell}$. We introduce the function $d : TS^c\big(C^*_R\big\langle x_{\bullet\diamond}(\cdot)\big\rangle\big)\times TS^c\big(C^*_R\big\langle x_{\bullet\diamond}(\cdot)\big\rangle\big) \to [0,+\infty)$ by 
$$
d(\tau_1,\tau_2) := \sum_{m=1}^\infty \sum_{\ell=1}^\infty \frac{1}{2^m(2R)^\ell} \max_{w \in \mathcal{W}_\ell} \sup_{(t_1,\dots,t_\ell) \in [0,m]^\ell}\big| \tau_1(w(t_1,\dots,t_\ell)) - \tau_2(w(t_1,\dots,t_\ell))\big|
$$
for $\tau_1, \tau_2 \in TS^c\big(C^*_R\big\langle x_{\bullet\diamond}(\cdot)\big\rangle\big)$. 

\begin{lemma}\label{L2.2} 
{\rm(1)} $\big(TS^c\big(C^*_R\big\langle x_{\bullet\diamond}(\cdot)\big\rangle\big),d\big)$ is a complete metric space. 

{\rm(2)} For any sequence $(\delta_k)_{k\geq 1}$ of positive real numbers, 
$$
\Gamma_{(\delta_k)} := \bigcap_{k\geq1} \Bigg\{ \tau \in TS^c\big(C^*_R\big\langle x_{\bullet\diamond}(\cdot)\big\rangle\big)\,\Big|\,\sup_{\substack{0 \leq s,t \leq k \\ |s-t| \leq \delta_k}} \max_{\substack{1 \leq j \leq r(i) \\ 1 \leq i \leq n+1}} \tau\big((x_{ij}(s) - x_{ij}(t))^2\big)^{1/2} \leq \frac{1}{k} \Bigg\}  
$$
defines a compact subset in $TS^c\big(C^*_R\big\langle x_{\bullet\diamond}(\cdot)\big\rangle\big)$ endowed with $d$. 
\end{lemma}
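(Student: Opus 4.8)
\emph{Proof strategy.} The plan for part {\rm(1)} is first to record that $d$ is a metric: it is finite because each summand is at most $\frac{1}{2^m(2R)^\ell}\cdot 2R^\ell = 2^{-(m+\ell-1)}$ (since $\Vert w(t_1,\dots,t_\ell)\Vert \leq R^\ell$ and the $\tau_i$ are states), symmetry and the triangle inequality follow from those of $|\cdot|$ together with the $\max$ and $\sup$ operations, and $d(\tau_1,\tau_2)=0$ forces $\tau_1=\tau_2$ on every time-labelled monomial $w(t_1,\dots,t_\ell)$, hence on the dense $*$-subalgebra $\mathbb{C}\langle x_{\bullet\diamond}(\cdot)\rangle$, hence on all of $C^*_R\langle x_{\bullet\diamond}(\cdot)\rangle$. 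For completeness I would take a $d$-Cauchy sequence $(\tau_k)_k$; inspecting the defining series, for every $m$ and $\ell$ the functions $(t_1,\dots,t_\ell)\mapsto\tau_k(w(t_1,\dots,t_\ell))$, $w\in\mathcal{W}_\ell$, are uniformly Cauchy on $[0,m]^\ell$, so they converge locally uniformly to a continuous limit which I denote $\Phi(w(t_1,\dots,t_\ell))$. Extending $\Phi$ linearly over $\mathbb{C}\langle x_{\bullet\diamond}(\cdot)\rangle$ — time-labelled words forming a linear basis of it — produces a functional that inherits $|\Phi(a)|\leq\Vert a\Vert$, positivity, unitality and traciality from the $\tau_k$, and therefore extends to an element $\tau$ of $TS(C^*_R\langle x_{\bullet\diamond}(\cdot)\rangle)$; by Lemma~\ref{L2.1} and the continuity of its moment functions $\tau\in TS^c(C^*_R\langle x_{\bullet\diamond}(\cdot)\rangle)$. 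Finally $d(\tau_k,\tau)\to0$ by splitting the double series into a finite part, controlled by the local uniform convergence just obtained, and a tail, controlled by $\sum_{m,\ell}2^{-(m+\ell-1)}<\infty$.

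For part {\rm(2)} I would first observe that $\Gamma_{(\delta_k)}$ is closed: for each fixed $k$, $s,t$ and $(i,j)$ the set $\{\tau : \tau((x_{ij}(s)-x_{ij}(t))^2)\leq 1/k^2\}$ is $d$-closed, since $\tau\mapsto\tau((x_{ij}(s)-x_{ij}(t))^2)$ is $d$-continuous (it only involves monomials of length $\leq 2$), and $\Gamma_{(\delta_k)}$ is an intersection of such sets. The heart of the matter is sequential compactness. Given $(\tau_p)_p\subset\Gamma_{(\delta_k)}$, a diagonal extraction over the countable set of pairs (word $w$, rational time tuple) gives a subsequence along which $\tau_p(w(q_1,\dots,q_\ell))$ converges for every $w$ and every rational tuple. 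The key point is that membership in $\Gamma_{(\delta_k)}$ makes the moment functions $(t_1,\dots,t_\ell)\mapsto\tau_p(w(t_1,\dots,t_\ell))$ equicontinuous on each $[0,m]^\ell$ \emph{uniformly in $p$}: telescoping one time coordinate at a time and applying the trace Cauchy--Schwarz inequality $|\tau(CY)|\leq\tau(C^2)^{1/2}\,\tau(Y^*Y)^{1/2}$ with $C=x_{ij}(t)-x_{ij}(s)$ self-adjoint and $\Vert Y\Vert\leq R^{\ell-1}$ yields
$$
\big|\tau_p(w(t_1,\dots,t_\ell)) - \tau_p(w(t_1',\dots,t_\ell'))\big| \leq \ell\,R^{\ell-1}\max_{1\leq q\leq\ell}\tau_p\big((x_{i_q j_q}(t_q)-x_{i_q j_q}(t_q'))^2\big)^{1/2},
$$
whose right-hand side is at most $\ell R^{\ell-1}/k$ once $k\geq m$ and $\max_q|t_q-t_q'|\leq\delta_k$, hence $<\varepsilon$ for $k$ large. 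Arzel\`{a}--Ascoli, combined with the pointwise convergence on rational tuples, then upgrades the extracted subsequence to one converging uniformly on every $[0,m]^\ell$, i.e.\ a $d$-Cauchy subsequence, which by part {\rm(1)} converges to some $\tau\in TS^c(C^*_R\langle x_{\bullet\diamond}(\cdot)\rangle)$; passing to the limit in the inequalities defining $\Gamma_{(\delta_k)}$ keeps $\tau$ inside it. Since sequential compactness is compactness in a metric space, this proves {\rm(2)}.

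The step I expect to be the main obstacle is precisely this uniform equicontinuity estimate, together with the reason it is indispensable: $C^*_R\langle x_{\bullet\diamond}(\cdot)\rangle$ has uncountably many generators and is non-separable, so although $TS(C^*_R\langle x_{\bullet\diamond}(\cdot)\rangle)$ is weak-$*$ compact it need not be metrizable and one cannot simply quote weak-$*$ sequential compactness; the role of cutting down to $\Gamma_{(\delta_k)}$ is to reinstate an equicontinuity that holds \emph{simultaneously for the whole sequence}, and the trace Cauchy--Schwarz inequality is exactly what converts the uniform $L^2$-modulus of continuity built into $\Gamma_{(\delta_k)}$ into a uniform modulus of continuity for all noncommutative moments, which is what an Arzel\`{a}--Ascoli argument consumes.
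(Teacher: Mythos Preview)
Your proposal is correct and follows essentially the same route as the paper: for (1), pointwise limits on monomials extended by density plus the local-uniform convergence giving membership in $TS^c$ via Lemma~\ref{L2.1}; for (2), the trace Cauchy--Schwarz equicontinuity estimate followed by an Arzel\`a--Ascoli/diagonal extraction and an appeal to part (1). The only cosmetic difference is the order of operations in (2): you first diagonalize over rational time-tuples and then use equicontinuity to upgrade to uniform convergence, whereas the paper applies Arzel\`a--Ascoli directly on each $[0,m]^\ell$ and then runs two nested diagonal arguments over $\ell$ and $m$; these are equivalent implementations of the same idea.
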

\begin{proof} (1) It is easy to see that $d$ defines a metric on $TS^c\big(C^*_R\big\langle x_{\bullet\diamond}(\cdot)\big\rangle\big)$. Thus it suffices to confirm the completeness of the space. 

Let $\tau_p \in TS^c\big(C^*_R\big\langle x_{\bullet\diamond}(\cdot)\big\rangle\big)$ be a Cauchy sequence, that is, $d(\tau_p,\tau_q) \to 0$ as $p,q\to\infty$. For every $w = x_{i_1 j_1}\cdots x_{i_\ell j_\ell} \in \mathcal{W}_\ell$ we have 
$$
|\tau_p(w(t_1,\dots,t_\ell)) - \tau_q(w(t_1,\dots,t_\ell))| \leq 2^m(2R)^\ell d(\tau_p,\tau_q) \to 0 
$$
as $p,q\to \infty$ for every $(t_1,\dots,t_\ell) \in [0,m]^\ell$. Hence, $\lim_{p\to\infty}\tau_p(x_{i_1 j_1}(t_1)\cdots x_{i_1 j_\ell}(t_\ell))$ exists for every word $x_{i_1 j_1}(t_1)\cdots x_{i_\ell j_\ell}(t_\ell)$ in $\mathbb{C}\big\langle x_{\bullet\diamond}(\cdot)\big\rangle$. Since $\mathbb{C}\big\langle  x_{\bullet\diamond}(\cdot)\big\rangle$ is the universal $*$-algebra generated by the $x_{ij}(t) = x_{ij}(t)^*$, the words $x_{i_1 j_1}(t_1)\cdots x_{i_\ell j_\ell}(t_\ell)$ together with the unit $1$ form a linear basis. Hence, we can construct a linear functional $\tau$ on $\mathbb{C}\big\langle x_{\bullet\diamond}(\cdot)\big\rangle$ in such a way that $\tau(1) = 1$ and $\tau(x_{i_1 j_1}(t_1)\cdots x_{i_\ell j_\ell}(t_\ell)) = 
\lim_{p\to\infty}\tau_p(x_{i_1 j_1}(t_1)\cdots x_{i_\ell j_\ell}(t_\ell))$;  
hence $\tau(P) = \lim_{p\to\infty}\tau_p(P)$ for every $P \in \mathbb{C}\big\langle x_{ij}(\cdot)\big\rangle$. Clearly, $\tau$ is a tracial state. We have $|\tau(P)| = \lim_{p\to\infty}|\tau_p(P)| \leq \Vert P \Vert$ for every $P \in \mathbb{C}\big\langle x_{\bullet\diamond}(\cdot)\big\rangle$ ($\hookrightarrow C_R^*\big\langle x_{\bullet\diamond}(\cdot)\big\rangle$ naturally), and therefore, $\tau$ extends a tracial state on $C_R^*\big\langle x_{\bullet\diamond}(\cdot)\big\rangle$. 

Fix $w \in \mathcal{W}_\ell$ and $m \in \mathbb{N}$ for a while. We have 
\begin{align*}
\big|\tau_p(w(t_1,\dots,t_\ell)) - \tau(w(t_1,\dots,t_\ell))\big| 
&= 
\lim_{q \to \infty}\big|\tau_p(w(t_1,\dots,t_\ell)) - \tau_q(w(t_1,\dots,t_\ell))\big| \\
&\leq 
2^m(2R)^\ell \varlimsup_{k'\to\infty} d(\tau_p,\tau_q) 
\end{align*}
for every $(t_1,\dots,t_\ell) \in [0,m]^\ell$; hence 
\begin{align*}
\sup_{(t_1,\dots,t_\ell) \in [0,m]^\ell}\big|\tau_p(w(t_1,\dots,t_\ell)) - \tau(w(t_1,\dots,t_\ell))\big| 
\leq 
2^m(2R)^\ell \varlimsup_{q\to\infty} d(\tau_p,\tau_q). 
\end{align*}
Thus $\tau(w(t_1,\dots,t_\ell)) = \lim_{p\to\infty}\tau_p(w(t_1,\dots,t_\ell))$ is uniform in $(t_1,\dots,t_\ell) \in [0,m]^\ell$. Since $m \in \mathbb{N}$ is arbitrary, we conclude, by Lemma \ref{L2.1}, that $\tau \in TS^c\big(C^*_R\big\langle x_{\bullet\diamond}(\cdot)\big\rangle\big)$. 

(2) Let $\tau_p$ be an arbitrary sequence in $\Gamma_{(\delta_k)}$. For every $m=1,2,\dots$ and every $w \in \mathcal{W}_\ell$, the sequence of continuous functions $\tau_p(w(t_1,\dots,t_\ell))$ is equicontinuous on $[0,m]^\ell$, since  
\begin{align*}
\big|\tau_p(w(t_1,\dots,t_\ell)) - \tau_p(w(t'_1,\dots,t'_\ell))\big| 
\leq 
R^{\ell-1}\sum_{m=1}^\ell \tau_p\big((x_{ij}(t_m) - x_{ij}(t'_m))^2\big)^{1/2}
\end{align*}
by the Cauchy--Schwarz inequality.  Hence, for each $m,\ell=1,2,\dots$, the Arzela-Ascoli theorem (see e.g., \cite[Theorem 11.28]{Rudin:RedBook}) guarantees that any subsequence of $\tau_p$ has a subsequence $\tau_{p'}$ such that $\tau_{p'}(w(t_1,\dots,t_\ell))$ converges uniformly on $[0,m]^\ell$ as $p'\to\infty$ for all $w \in \mathcal{W}_\ell$ ({\it n.b.}\ $\mathcal{W}_\ell$ is a finite set). Then, the usual diagonal argument with respect to $\ell=1,2,\dots$ enables us to select a subsequence $\tau_{p''}$ in such a way that for every $w \in \mathcal{W}_\ell$, $\ell = 1,2,\dots$, the sequence of continuous functions $\tau_{p''}(w(t_1,\dots,t_\ell))$ converges uniformly on $[0,m]^\ell$ for as $p''\to\infty$. This is done for each $m$ and any given subsequence of $\tau_p$. Thus, by the usual diagonal argument again with respect to $m$, we can choose a common subsequence $\tau_{p'''}$ that satisfies the same uniform convergence for all $m$.  In the same way as in the discussion about (1) above we can construct a tracial state $\tau \in TS^c\big(C^*_R\big\langle x_{\bullet\diamond}(\cdot)\big\rangle\big)$ in such a way that $d(\tau_{p'''},\tau) \to 0$ as $p''' \to \infty$. Moreover, for every pair $0 \leq s, t \leq k$ with $|s-t| \leq \delta_k$ and every possible pair $(i,j)$, one has $\tau\big((x_{ij}(s) - x_{ij}(t))^2\big) = \lim_{p''' \to\infty} \tau_{p'''}\big((x_{ij}(s) - x_{ij}(t))^2\big) \leq 1/k^2$, and hence $\tau$ falls into $\Gamma_{(\delta_k)}$. 
\end{proof}

We will provide some notations that will be used throughout the rest of this article. 

\medskip\noindent
{\bf 1. Time-marginal tracial states:} Let $C^*_R\big\langle x_{\bullet\diamond}\big\rangle = C_R^*\big\langle \{x_{ij}\}_{1 \leq i \leq n+1, 1 \leq j \leq r(i)}\big\rangle$ be the universal $C^*$-algebra generated by the $x_{ij} = x_{ij}^*$, $1 \leq i \leq n+1$, $1 \leq j \leq r(i)$ with subject to $\Vert x_{ij} \Vert \leq R$. For each $\mathbf{t} := (t_1,\dots,t_{n+1}) \in [0,+\infty)^{n+1}$, there exists a unique $*$-homomorphism (actually a $*$-isomorphism) $\pi_\mathbf{t} : C^*_R\big\langle x_{\bullet\diamond}\big\rangle \to C^*_R\big\langle x_{\bullet\diamond}(\cdot)\big\rangle$ sending $x_{ij}$ to $x_{ij}(t_i)$. When $t := t_1=\cdots=t_{n+1}$ we simply write $\pi_t := \pi_\mathbf{t}$. The $\pi_\mathbf{t} $ induces a continuous map $\pi_\mathbf{t} ^* : TS^c\big(C^*_R\big\langle x_{\bullet\diamond}(\cdot)\big\rangle\big) \to TS\big(C^*_R\big\langle x_{\bullet\diamond}\big\rangle\big)$ by $\pi_\mathbf{t} ^*(\tau) := \tau\circ\pi_\mathbf{t}$, where $TS\big(C^*_R\big\langle x_{\bullet\diamond}\big\rangle\big)$ is equipped with the $w^*$-topology. By Lemma \ref{L2.1} it is easy to see that $\mathbf{t} \mapsto \pi_\mathbf{t} ^*(\tau)$ is continuous for every $\tau \in TS^c\big(C^*_R\big\langle x_{\bullet\diamond}(\cdot)\big\rangle\big)$. We call $\pi_\mathbf{t}^*(\tau)$ \emph{the marginal tracial state of $\tau$ at multiple time $\mathbf{t}$}. 

\medskip\noindent 
{\bf 2. The empirical distribution $\tau_{\Xi^\mathrm{lib}(N)}$ of $\Xi^\mathrm{lib}(N)$:} The matrix liberation process $\Xi^\mathrm{lib}(N)$ defines $\tau_{\Xi^\mathrm{lib}(N)} \in TS^c\big(C^*_R\big\langle x_{\bullet\diamond}(\,\cdot\,)\big\rangle\big)$ in such a way that 
$$
\tau_{\Xi^\mathrm{lib}(N)}(P) := \mathrm{tr}_N\big(P(\xi_{\bullet\diamond}^\mathrm{lib}(N)(\cdot))\big), \qquad P \in \mathbb{C}\big\langle x_{\bullet\diamond}(\cdot)\big\rangle.
$$
We call this tracial state $\tau_{\Xi^\mathrm{lib}(N)}$ \emph{the empirical distribution of the matrix liberation process $\Xi^\mathrm{lib}(N)$}. The tracial state $\tau_{\Xi^\mathrm{lib}(N)}$ is a random tracial state; actually, it depends upon the $n$ independent left unitary Brownian motions $U_N^{(i)}$ via $\xi^\mathrm{lib}_{ij}(N)$. Hence we have a Borel probability measure $\mathbb{P}(\tau_{\Xi^\mathrm{lib}(N)} \in \,\cdot\,)$ on $TS^c\big(C^*_R\big\langle x_{\bullet\diamond}(\cdot)\big\rangle\big)$, and the large deviation upper bound that we will prove is about the sequence of probability measures $\mathbb{P}(\tau_{\Xi^\mathrm{lib}(N)} \in \,\cdot\,)$. 

\medskip\noindent
{\bf 3. The empirical distribution $\sigma_0^\mathrm{lib}$ of the liberation process with initial distribution $\sigma_0$:} 
The limit joint distribution $\sigma_0$ of the sequence $\Xi(N)$ is defined to be a tracial state on $C^*_R\big\langle x_{\bullet\diamond}\big\rangle$ naturally. Using its GNS construction and taking a suitable free product, we can construct self-adjoint random variables $x^{\sigma_0}_{ij} = x^{\sigma_0}_{ij}{}^*$, $1 \leq i \leq n+1$, $1 \leq j \leq r(i)$ and $n$ freely independent, left free unitary Brownian motions $v_i$, $1 \leq i \leq n$, in a tracial $W^*$-probability space, say $(\mathcal{L},\tilde{\sigma}_0)$, in such a way that the joint distribution of the $x^{\sigma_0}_{ij}$ is indeed $\sigma_0$ and that the $x^{\sigma_0}_{ij}$ and the $v_i$ are freely independent. Thanks to the universality of the $C^*$-algebra $C^*_R\big\langle x_{\bullet\diamond}(\cdot)\big\rangle$, the strong-operator continuous processes 
$$
x_{ij}^{\sigma_0^\mathrm{lib}}(t) := 
\begin{cases} 
 v_i(t)\,x_{ij}^{\sigma_0}\,v_i(t)^* & (1 \leq i \leq n), \\
 \qquad x_{n+1\,j}^{\sigma_0} & (i=n+1) 
 \end{cases} 
$$
define a tracial state $\sigma_0^\mathrm{lib} \in TS^c\big(C^*_R\big\langle x_{\bullet\diamond}(\cdot)\big\rangle\big)$. 

\medskip
Here is a simple fact. 

\begin{proposition}\label{P2.3}  For every $P \in \mathbb{C}\big\langle x_{\bullet\diamond}(\cdot)\big\rangle$ we have $\lim_{N\to\infty}\mathbb{E}\big[\tau_{\Xi^\mathrm{lib}(N)}(P)\big] = \sigma_0^\mathrm{lib}(P)$, that is, $\lim_{N\to\infty}\mathbb{E}\big[\tau_{\Xi^\mathrm{lib}(N)}(\,\cdot\,)\big] = \sigma_0^\mathrm{lib}$ in the weak$^*$-topology.  
\end{proposition}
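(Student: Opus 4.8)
The plan is to deduce this from the convergence in $*$-distribution, as $N\to\infty$, of the independent matrix unitary Brownian motions $U_N^{(i)}$, regarded as processes, jointly with the deterministic matrices $\Xi(N)$.

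First I would reduce to a single word. Since $\mathbb{C}\big\langle x_{\bullet\diamond}(\cdot)\big\rangle$ is spanned by $1$ and the words $w(t_1,\dots,t_\ell)$, $w=x_{i_1 j_1}\cdots x_{i_\ell j_\ell}\in\mathcal{W}_\ell$, and both $\mathbb{E}\big[\tau_{\Xi^\mathrm{lib}(N)}(\,\cdot\,)\big]$ and $\sigma_0^\mathrm{lib}$ are linear, it suffices to show $\mathbb{E}\big[\tau_{\Xi^\mathrm{lib}(N)}(w(t_1,\dots,t_\ell))\big]\to\sigma_0^\mathrm{lib}(w(t_1,\dots,t_\ell))$ for every such word and every $(t_1,\dots,t_\ell)$. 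Substituting the definitions of $\xi^\mathrm{lib}_{ij}(N)$ and $x^{\sigma_0^\mathrm{lib}}_{ij}$, the left-hand side equals $\mathbb{E}[\mathrm{tr}_N(Q_N)]$ and the right-hand side equals $\tilde{\sigma}_0(Q_\infty)$, where $Q_N$ (resp.\ $Q_\infty$) is the fixed $*$-monomial obtained from $w(t_1,\dots,t_\ell)$ by replacing each $x_{ij}(t)$ with $U_N^{(i)}(t)\,\xi_{ij}(N)\,U_N^{(i)}(t)^*$ (resp.\ $v_i(t)\,x^{\sigma_0}_{ij}\,v_i(t)^*$) for $1\le i\le n$ and with $\xi_{n+1\,j}(N)$ (resp.\ $x^{\sigma_0}_{n+1\,j}$) for $i=n+1$. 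Thus $Q_N$ is a word in the finitely many matrices $\{U_N^{(i)}(t_k)\}$, $\{U_N^{(i)}(t_k)^*\}$ ($1\le i\le n$, $1\le k\le\ell$) and $\{\xi_{ij}(N)\}$, and $\mathbb{E}[\mathrm{tr}_N(Q_N)]$ is the corresponding joint $*$-moment of this family.

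The key input is then: for any finite set of times, the family of processes $(U_N^{(i)})_{i=1}^n$ jointly with $\Xi(N)$ converges in $*$-distribution to $(v_i)_{i=1}^n$ jointly with $(x^{\sigma_0}_{ij})$ inside $(\mathcal{L},\tilde{\sigma}_0)$ --- i.e.\ the $v_i$ are freely independent left free unitary Brownian motions, free from $(x^{\sigma_0}_{ij})$, the latter having distribution $\sigma_0$. I would prove this from three ingredients. (i) $\Xi(N)\to\sigma_0$ in $*$-distribution, which is the standing assumption. (ii) For each fixed $i$, $(U_N^{(i)}(t_1),\dots,U_N^{(i)}(t_\ell))$ converges in $*$-distribution to $(v_i(t_1),\dots,v_i(t_\ell))$: for a single time this is Biane's convergence of the matrix unitary Brownian motion to the free unitary Brownian motion \cite{Biane:Fields97}, and for several times one sorts the $t_k$, writes $U_N^{(i)}(t)$ as the corresponding ordered product of the increments $U_N^{(i)}(t_{(m)})U_N^{(i)}(t_{(m-1)})^*$ --- which are independent, each distributed as $U_N^{(i)}$ at the time-lag, and each with conjugation-invariant law --- applies asymptotic freeness of these increments, and identifies the limit through the left free multiplicative increment characterization of the left free unitary Brownian motion (cf.\ the left increment property recalled in the Introduction, \cite{Levy:arXiv:1112.2452v2}). (iii) The $n$ processes $U_N^{(i)}$ and the deterministic $\Xi(N)$ are mutually independent, and the law of each process $U_N^{(i)}$ is invariant under global conjugation by an arbitrary fixed unitary, because the driving Hermitian Brownian motion $H_N^{(i)}$ is a standard Gaussian process on $M_N(\mathbb{C})^\mathrm{sa}\cong\mathbb{R}^{N^2}$, whose law is invariant under the orthogonal transformation $A\mapsto VAV^*$; hence Voiculescu's asymptotic freeness theorem \cite{Voiculescu:InventMath91} for independent ensembles, all but one of which are conjugation invariant, applies and, combined with (i) and (ii), yields that the whole family is asymptotically free with the stated limit.

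Finally, since $\mathbb{E}[\mathrm{tr}_N(Q_N)]$ is a fixed polynomial in the joint $*$-distribution of $\big((U_N^{(i)})_{i=1}^n,\Xi(N)\big)$ and the latter converges to that of $\big((v_i)_{i=1}^n,(x^{\sigma_0}_{ij})\big)$, we get $\mathbb{E}[\mathrm{tr}_N(Q_N)]\to\tilde{\sigma}_0(Q_\infty)=\sigma_0^\mathrm{lib}(w(t_1,\dots,t_\ell))$, which finishes the proof; note that only convergence in expectation is claimed, so no concentration estimate is needed here. The main obstacle is ingredient (ii): the multi-time convergence of a single matrix unitary Brownian motion to the free unitary Brownian motion together with the correct identification of the limiting process as the left free unitary Brownian motion --- this is exactly where the left increment property and the free-probabilistic description of $v_i$ do the work. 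Assembling (iii) also requires a little care, namely checking that Voiculescu's asymptotic freeness result is applied to the right families (each indexed by the finitely many chosen times) and that the resulting free product is precisely the restriction of $\tilde{\sigma}_0$ to the relevant subalgebra.
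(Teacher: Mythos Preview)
Your proposal is correct and essentially matches what the paper does: its proof is a one-line reference to Biane's \cite[Theorem 1(2)]{Biane:Fields97}, and your outline is precisely the skeleton of that argument (reduce to words, rewrite in terms of $U_N^{(i)}(t_k)$ and $\xi_{ij}(N)$, then use asymptotic freeness of independent unitary Brownian motions with deterministic matrices). One small remark: the general ``independent, conjugation-invariant ensembles plus one deterministic family are asymptotically free'' statement you invoke in (iii) is not literally in \cite{Voiculescu:InventMath91} (which treats Gaussian matrices); for the present setting it is cleanest either to cite Biane's result directly or to reduce via the increment trick to finitely many independent single-time unitary Brownian motions, whose laws are unitarily invariant with limiting distribution, and then appeal to the standard asymptotic freeness for such families.
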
 
\begin{proof} The proof of \cite[Theorem 1(2)]{Biane:Fields97} works well without essential change. 
\end{proof} 

This essentially known fact should be understood as a counterpart of the convergence of finite dimensional distributions, and will be strengthened to the convergence as continuous processes in subsection 5.3. Namely, we will prove that the empirical distribution $\tau_{\Xi^\mathrm{lib}(N)}$ itself converges to $\sigma_0^\mathrm{lib}$ in the metric $d$ almost surely. Here, we briefly mention the known facts concerning the above proposition. The almost-sure version (i.e., without taking the expectation $\mathbb{E}$) of the above proposition has also been known so far (see e.g., the introduction of \cite{CollinsDahlqvistKemp:PTRF1x}); in fact, one can see it in the same way as in \cite[Theorem 1(2)]{Biane:Fields97} with the use of more recent results, for example, \cite[Proposition 6.9]{Levy:AdvMath08} and (the proof of) \cite[Theorem 4.3.5]{HiaiPetz:Book} (see the comment just before Example 4.3.7 there). Moreover, its almost-sure, strong convergence (i.e., the convergence of operator norms) version was recently established by Collins, Dahlqvist and Kemp \cite{CollinsDahlqvistKemp:PTRF1x}. In those results, the event of convergence (whose probability is of course $1$) depends on the choice of time indices $t_1,\dots,t_k$, unlike the fact that we will prove in subsection 5.3.

\section{Computation of Exponential Martingale} 

It is easy to see that, as long as $i \neq n+1$, 
\begin{equation}\label{Eq2}
\begin{aligned}
\big\langle\xi_{ij}^\mathrm{lib}(N)(t),C_{\alpha\beta}\big\rangle_\mathrm{HS} 
&= \big\langle\xi_{ij}(N),C_{\alpha\beta}\big\rangle_\mathrm{HS} \\
&\quad+ \sum_{\alpha',\beta'=1}^N \int_0^t \Big\langle\mathrm{i}\Big[\frac{1}{\sqrt{N}}C_{\alpha'\beta'},\xi_{ij}^\mathrm{lib}(N)(s)\Big],C_{\alpha\beta}\Big\rangle_\mathrm{HS}\,\mathrm{d}B_{\alpha'\beta'}^{(i)}(s) \\
&\quad+ \int_0^t \big\langle\mathrm{tr}_N(\xi_{ij}^\mathrm{lib}(N)(s))I_N -\xi_{ij}^\mathrm{lib}(N)(s),C_{\alpha\beta} \big\rangle_\mathrm{HS}\,\mathrm{d}s
\end{aligned}
\end{equation}
in the Euclidian coordinates on $M_N(\mathbb{C})^\mathrm{sa}$ with respect to the basis $C_{\alpha\beta}$. 

For a given $P = P^* \in \mathbb{C}\big\langle x_{\bullet\diamond}(\cdot)\big\rangle$ the matrix liberation process $t \mapsto \Xi^\mathrm{lib}(N)(t)$ gives the (real-valued) bounded martingale $M_N$ in \eqref{Eq1}, that is, 
$$
M_N(t) = \mathbb{E}\big[\tau_{\Xi^\mathrm{lib}(N)}(P)\mid\mathcal{F}_t\big] - \mathbb{E}\big[\tau_{\Xi^\mathrm{lib}(N)}(P)\big].
$$
The Clark--Ocone formula (see e.g., \cite[Proposition 6.11]{Hu:Book16} for any dimension and \cite[subsection 1.3.4]{Nualart:Book06} for $1$-dimension) asserts that 
$$
M_N(t) 
= \sum_{k=1}^n \sum_{\alpha',\beta'=1}^N \int_0^t \mathbb{E}\big[\mathrm{D}^{(k;\alpha',\beta')}_s\mathrm{tr}_N\big(P(\xi_{\bullet\diamond}^\mathrm{lib}(N)(\cdot))\big)\mid\mathcal{F}_s\big]\,\mathrm{d}B_{\alpha'\beta'}^{(k)}(s), 
$$
where $\mathrm{D}^{(k;\alpha',\beta')}_s$ denotes the Malliavin derivative in the Brownian motion $B_{\alpha'\beta'}^{(k)}$ explained in \cite[p.119]{Nualart:Book06}. The aim of this section is to compute this integrand explicitly by introducing a suitable non-commutative derivative.  

\medskip
Observe that all the coefficients of SDE \eqref{Eq2} are independent of the time parameter and linear in the space variable. Thus, $\mathrm{D}_s^{(k;\alpha',\beta')}\big\langle\xi_{ij}^\mathrm{lib}(N)(t),C_{\alpha\beta}\big\rangle_\mathrm{HS}$ is well-defined. See e.g., \cite[Theorem 2.2.1]{Nualart:Book06} for details. The function $\xi \mapsto U_N^{(i)}(t)\xi U_N^{(i)}(t)^*$ is linear, and hence the matrix-valued process $Y(t)$ in \cite[p.126]{Nualart:Book06} is given by $Y^{(\alpha,\beta)}_{(\alpha',\beta')}(t) = \big\langle U_N^{(i)}(t) C_{\alpha'\beta'} U_N^{(i)}(t)^*,C_{\alpha\beta}\big\rangle_\mathrm{HS}$. By \eqref{Eq2}, the formula \cite[Eq.(2.59)]{Nualart:Book06} enables us to obtain that   
\begin{align*}
&\mathrm{D}_s^{(k;\alpha',\beta')}\big\langle \xi_{ij}^\mathrm{lib}(N)(t),C_{\alpha\beta}\big\rangle_\mathrm{HS} \\
&= 
\delta_{k,i}\mathbf{1}_{[0,t]}(s)\,
\big\langle U_N^{(k)}(t) C_{\alpha_1\beta_1} U_N^{(k)}(t)^*,C_{\alpha\beta}\big\rangle_\mathrm{HS} 
\big\langle U_N^{(k)}(s)^* C_{\alpha_2\beta_2} U_N^{(k)}(s),C_{\alpha_1\beta_1}\big\rangle_\mathrm{HS} \\
&\phantom{aaaaaaaaaaaaaaaaaaaaaaaaaaaaaaaaaaaaa}
\times\Big\langle \mathrm{i}\Big[\frac{1}{\sqrt{N}}C_{\alpha'\beta'},\xi_{ij}^\mathrm{lib}(N)(s)\Big],C_{\alpha_2\beta_2}\Big\rangle_\mathrm{HS} \\
&= 
\frac{\delta_{k,i}\mathbf{1}_{[0,t]}(s)}{\sqrt{N}}\,
\big\langle U_N^{(k)}(t)U_N^{(k)}(s)^*\,\mathrm{i}\,\big[C_{\alpha'\beta'},\xi_{ij}^\mathrm{lib}(N)(s)\big]U_N^{(k)}(s)U_N^{(k)}(t)^*,C_{\alpha\beta}\big\rangle_\mathrm{HS}, 
\end{align*}
where we used the convention of summation over repeated indices $(\alpha_1,\beta_1)$, $(\alpha_2,\beta_2)$ as in \cite[section 2.2]{Nualart:Book06}. For a while, we assume that $P$ is a monomial in the $\xi^\mathrm{lib}_{ij}(N)(t)$. By the Leibniz formula of $\mathrm{D}^{(k;\alpha',\beta')}_s$ we have, for any $\zeta \in \mathbb{C}$, 
\begin{align*}
&\mathrm{D}^{(k;\alpha',\beta')}_s\,\mathrm{Re}\Big(\mathrm{tr}_N\big(\zeta P(\xi_{\bullet\diamond}^\mathrm{lib}(N)(\cdot))\big)\Big) \\
&= 
\frac{1}{\sqrt{N}}\sum_{\substack{P = Q_1 x_{kj}(t) Q_2 \\ s \leq t}} \sum_{\alpha,\beta=1}^N \mathrm{Re}\Big(\zeta\,\mathrm{tr}_N(Q_1 C_{\alpha\beta} Q_2)\Big)\, 
\mathrm{D}_s^{(k;\alpha',\beta')}\big\langle \xi_{kj}^\mathrm{lib}(N)(t),C_{\alpha\beta}\big\rangle_\mathrm{HS} \\
&= 
\frac{1}{\sqrt{N}}\sum_{\substack{P = Q_1 x_{kj}(t) Q_2 \\ s \leq t}} \mathrm{Re}\Big(\zeta\,\mathrm{tr}_N(Q_1 U_N^{(k)}(t)U_N^{(k)}(s)^*\,\mathrm{i}\,\big[C_{\alpha'\beta'},\xi_{kj}^\mathrm{lib}(N)(s)\big]U_N^{(k)}(s)U_N^{(k)}(t)^* Q_2)\Big) \\
&= 
\mathrm{Re}\Bigg(\frac{\zeta\,\mathrm{i}}{\sqrt{N}}\sum_{\substack{P = Q_1 x_{kj}(t) Q_2 \\ s \leq t}} 
\mathrm{tr}_N\big(U_N^{(k)}(s)\big[\xi_{kj}(N),U_N^{(k)}(t)^* Q_2 Q_1\,U_N^{(k)}(t)\big]U_N^{(k)}(s)^*\,C_{\alpha' \beta'}\big)\Bigg), 
\end{align*}
where we identify $Q_l$, $l=1,2$, with $Q_l(\xi_{\bullet\diamond}^\mathrm{lib}(N)(\cdot))$ for short. Here and below we used the convention that the summation $\sum_{P = Q x_{kj}(t) R,\ s \leq t}$ above means that the resulting sum becomes $0$ if no $P = Q x_{kj}(t) R$ with $s \leq t$ occurs.  
Therefore, we conclude that  
\begin{align*} 
&\sum_{k=1}^n \sum_{\alpha',\beta'=1}^N \int_0^t \mathbb{E}\big[\mathrm{D}^{(k;\alpha', \beta')}_s\,\mathrm{Re}\big(\mathrm{tr}_N\big(\zeta P(\xi_{\bullet\diamond}^\mathrm{lib}(N)(\cdot))\big)\big)\mid\mathcal{F}_s\big]\,\mathrm{d}B_{\alpha' \beta'}^{(k)}(s) \\
&=
\sum_{k=1}^n \sum_{\alpha',\beta'=1}^N \int_0^t  \,\frac{\mathrm{d}B_{\alpha' \beta'}^{(k)}(s)}{\sqrt{N}} \times \\
&\quad
\mathrm{Re}\Big(\mathrm{tr}_N\Big(\zeta\,U_N^{(k)}(s)\mathbb{E}\Big[\,\mathrm{i}\,\sum_{\substack{P = Q_1 x_{kj}(t) Q_2 \\ s \leq t}}\big[\xi_{kj}(N),U_N^{(k)}(t)^* Q_2 Q_1\,U_N^{(k)}(t) \big] \mid \mathcal{F}_s\Big]U_N^{(k)}(s)^* C_{\alpha' \beta'}\Big)\Big). 
\end{align*} 
Here, we have used the notation $\mathbb{E}[Y|\mathcal{F}_t] = \big[\mathbb{E}[Y_{ij}|\mathcal{F}_t]\big]$ for a matrix-valued random variable $Y = [Y_{ij}]$, where we naturally extend $\mathbb{E}[\,-\,|\mathcal{F}_t]$ to complex-valued random variables. In the rest of this paper we also write $\mathbb{E}[Y] = \big[\mathbb{E}[Y_{ij}]\big]$. 

\medskip
We are now going back to a general $P = P^* \in \mathbb{C}\big\langle x_{\bullet\diamond}(\cdot)\big\rangle$. Write $P = \sum_l \zeta_l P_l$ with $\zeta_l \in \mathbb{C}$ and monomials $P_l$ in the $\xi^\mathrm{lib}_{ij}(N)(t)$. Then we set
\begin{align*}
&Z_N^{(k)}(s) 
:=  \sum_l \zeta_l\,U_N^{(k)}(s)\,\mathbb{E}\Big[\,\mathrm{i}\sum_{\substack{P_l = Q_{l1} x_{kj}(t) Q_{l2} \\ s \leq t}}\big[\xi_{kj}(N),U_N^{(k)}(t)^* Q_{l2} Q_{l1}\,U_N^{(k)}(t)\big]\mid\mathcal{F}_s\Big]\,U_N^{(k)}(s)^* \\
&= 
\mathbb{E}\Big[\sum_l \zeta_l\,\sum_{\substack{P_l = Q_{l1} x_{kj}(t) Q_{l2} \\ s \leq t}} (U_N^{(k)}(t)U_N^{(k)}(s)^*)^*\,\mathrm{i}\,\big[\xi_{kj}^\mathrm{lib}(N)(t), Q_{l2} Q_{l1} \big](U_N^{(k)}(t)U_N^{(k)}(s)^*)\mid\mathcal{F}_s\Big],
\end{align*}
which can be confirmed to be a self-adjoint matrix valued random variable thanks to $P=P^*$. Since $P=P^*$, that is, $\mathrm{tr}_N\big(P(\xi_{\bullet\diamond}^\mathrm{lib}(N)(\cdot))\big)$ is real-valued, we have 
\begin{align*}
M_N(t) 
&= 
\sum_{k=1}^n \sum_{\alpha',\beta'=1}^N \int_0^t \mathbb{E}\big[\mathrm{D}^{(k;\alpha',\beta')}_s\,\mathrm{Re}\big(\mathrm{tr}_N\big(P(\xi_{\bullet\diamond}^\mathrm{lib}(N)(\cdot))\big)\big)\mid\mathcal{F}_s\big]\,\mathrm{d}B_{\alpha'\beta'}^{(k)}(s) \\
&=
\sum_{k=1}^n \sum_{\alpha',\beta'=1}^N \int_0^t \frac{1}{\sqrt{N}}\mathrm{Re}\big(\mathrm{tr}_N\big(Z_N^{(k)}(s)\,C_{\alpha'\beta'}\big)\big)\,\mathrm{d}B_{\alpha',\beta'}^{(k)}(s) \\
&=
\sum_{k=1}^n \sum_{\alpha',\beta'=1}^N \int_0^t \frac{1}{\sqrt{N}}\mathrm{tr}_N\big(Z_N^{(k)}(s)\,C_{\alpha'\beta'}\big)\,\mathrm{d}B_{\alpha',\beta'}^{(k)}(s)
\end{align*}
and the quadratic variation $\langle M_N \rangle$ of $M_N(t)$ becomes 
\begin{align*} 
\langle M_N \rangle(t) 
&= 
\sum_{k=1}^n \sum_{\alpha',\beta'=1}^N \int_0^t \frac{1}{N}\mathrm{tr}_N\big(Z_N^{(k)}(s) C_{\alpha'\beta'})^2\,\mathrm{d}s \\
&= 
\sum_{k=1}^n \int_0^t \frac{1}{N^3} \sum_{\alpha',\beta'=1}^N \big\langle Z_N^{(k)}(s), C_{\alpha'\beta'} \big\rangle_\mathrm{HS}^2\,\mathrm{d}s \\
&= 
\sum_{k=1}^n \int_0^t \frac{1}{N^3} \big\Vert Z_N^{(k)}(s)\big\Vert_\mathrm{HS}^2\,\mathrm{d}s 
= 
\frac{1}{N^2}\sum_{k=1}^n \int_0^t \Vert Z_N^{(k)}(s)\Vert_{\mathrm{tr}_N,2}^2\,\mathrm{d}s, 
\end{align*}
where we used a well-known formula on stochastic integrals (see e.g., \cite[Proposition 3.2.17, Eq.(3.2.26)]{KaratzasShreve:Book}) as well as $\langle B^{(k)}_{\alpha\beta},B^{(k')}_{\alpha'\beta'}\rangle(t) = \delta_{(k,\alpha,\beta),(k',\alpha',\beta')}\,t$ (see e.g., \cite[Problem 2.5.5]{KaratzasShreve:Book}). 

\bigskip
Here we introduce suitable non-commutative derivations to describe $Z_N^{(k)}(s)$. 

\begin{definition}\label{D3.1} 
We expand $\mathbb{C}\big\langle x_{\bullet\diamond}(\cdot)\big\rangle$ into the universal $*$-algebra 
$$
\mathbb{C}\big\langle x_{\bullet\diamond}(\cdot), v_\bullet(\cdot) \big\rangle := \mathbb{C}\big\langle \{x_{ij}(t)\}_{1 \leq j \leq r(i), 1 \leq i \leq n+1, t \geq 0} \sqcup \{v_i(t)\}_{1 \leq i \leq n, t \geq 0} \big\rangle
$$ 
with subject to $x_{ij}(t) = x_{ij}(t)^*$ and $v_i(t)v_i(t)^* = 1 = v_i(t)^* v_i(t)$, and define the derivations 
$$
\delta_s^{(k)} : \mathbb{C}\big\langle x_{\bullet\diamond}(\cdot)\big\rangle \to \mathbb{C}\big\langle x_{\bullet\diamond}(\cdot), v_\bullet(\cdot) \rangle\otimes_\mathrm{alg}\mathbb{C}\big\langle x_{\bullet\diamond}(\cdot), v_\bullet(\cdot) \big\rangle
$$ 
by 
$$
\delta_s^{(k)} x_{ij}(t) = \delta_{k,i}\mathbf{1}_{[0,t]}(s)\big(x_{kj}(t) v_k(t-s) \otimes v_k(t-s)^* - v_k(t-s) \otimes v_k(t-s)^* x_{kj}(t)\big) 
$$
for $1 \leq k \leq n$. Let $\theta : \mathbb{C}\big\langle x_{\bullet\diamond}(\cdot), v_\bullet(\cdot) \rangle \otimes_\mathrm{alg} \mathbb{C}\langle x_{\bullet\diamond}(\cdot), v_\bullet(\cdot)\big\rangle \to \mathbb{C}\big\langle x_{\bullet\diamond}(\cdot), v_\bullet(\cdot)\big\rangle$ be a linear map defined by $\theta(Q\otimes R) = RQ$, and define 
$$
\mathfrak{D}_s^{(k)} := \theta\circ\delta_s^{(k)} : \mathbb{C}\big\langle x_{\bullet\diamond}(\cdot) \big\rangle \to \mathbb{C}\big\langle x_{\bullet\diamond}(\cdot), v_\bullet(\cdot) \big\rangle
$$
for $1 \leq k \leq n$. 
\end{definition} 

Although it is natural to define $\mathfrak{D}_s^{(k)}$ to be $-\mathrm{i}\,\theta\circ\delta_s^{(k)}$, we drop the scalar multiple $-\mathrm{i}$ in the definition for simplicity. It is easy to confirm that $Z_N^{(k)}(s)$ admits the following formula
$$
Z_N^{(k)}(s) = \mathbb{E}\big[-\mathrm{i}\,(\mathfrak{D}_s^{(k)}P)(\xi_{\bullet\diamond}^\mathrm{lib}(N)(\cdot), U_N^{(\bullet)}(\cdot+s)U_N^{(\bullet)}(s)^*))\mid\mathcal{F}_s\big], 
$$
and hence we have the next proposition thanks to \cite[Corollary 3.5.13]{KaratzasShreve:Book}.  

\begin{proposition}\label{P3.2} For any $P = P^* \in \mathbb{C}\langle x_{ij}(\cdot) \rangle$, we have 
\begin{align*} 
&M_N(t) 
:= \mathbb{E}\big[\tau_{\Xi^\mathrm{lib}(N)}(P)\mid\mathcal{F}_t\big] - \mathbb{E}\big[\tau_{\Xi^\mathrm{lib}(N)}(P)\big] \\
&=
\sum_{k=1}^n \sum_{\alpha',\beta'=1}^N \int_0^t \mathrm{tr}_N\big(\mathbb{E}\big[-\mathrm{i}\,(\mathfrak{D}_s^{(k)}P)(\xi_{\bullet\diamond}^\mathrm{lib}(N)(\cdot), U_N^{(\bullet)}(\cdot+s)U_N^{(\bullet)}(s)^*)\mid\mathcal{F}_s\big]\,C_{\alpha'\beta'}\big)\,\frac{\mathrm{d}B_{\alpha'\beta'}^{(k)}(s)}{\sqrt{N}}, \\
&\big\langle M_N \big\rangle(t) = 
\frac{1}{N^2}\sum_{k=1}^n \int_0^t \big\Vert \mathbb{E}\big[(\mathfrak{D}_s^{(k)}P)(\xi_{\bullet\diamond}^\mathrm{lib}(N)(\cdot)), U_N^{(\bullet)}(\cdot+s)U_N^{(\bullet)}(s)^*)\mid\mathcal{F}_s\big] \big\Vert_{\mathrm{tr}_N,2}^2\,\mathrm{d}s.
\end{align*}
Therefore,
\begin{align*}
&\mathrm{Exp}_N(t) := 
\exp\Big(N^2\Big(\mathbb{E}\big[\tau_{\Xi^\mathrm{lib}(N)}(P)\mid\mathcal{F}_t\big] - \mathbb{E}\big[\tau_{\Xi^\mathrm{lib}(N)}(P)\big] \\
&\qquad\qquad\qquad- \frac{1}{2}\sum_{k=1}^n \int_0^t \big\Vert \mathbb{E}\big[(\mathfrak{D}_s^{(k)}P)(\xi_{\bullet\diamond}^\mathrm{lib}(N)(\cdot), U_N^{(\bullet)}(\cdot+s)U_N^{(\bullet)}(s)^*)\mid\mathcal{F}_s\big] \big\Vert_{\mathrm{tr}_N,2}^2\,\mathrm{d}s\Big)\Big)
\end{align*}
becomes a martingale; hence $\mathbb{E}\big[\mathrm{Exp}_N(t)\big] = \mathbb{E}\big[\mathrm{Exp}_N(0)\big] = 1$. 
\end{proposition}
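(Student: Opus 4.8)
The plan is to put together the three ingredients already assembled above: the Clark--Ocone representation of $M_N$, the explicit formulas for $M_N(t)$ and $\langle M_N\rangle(t)$ in terms of the matrix-valued processes $Z_N^{(k)}$, and the derivations of Definition~\ref{D3.1}. The only point that still needs verification is the identity
\[
Z_N^{(k)}(s)=\mathbb{E}\big[-\mathrm{i}\,(\mathfrak{D}_s^{(k)}P)(\xi_{\bullet\diamond}^\mathrm{lib}(N)(\cdot),\,U_N^{(\bullet)}(\cdot+s)U_N^{(\bullet)}(s)^*)\mid\mathcal{F}_s\big].
\]
Granting it, the first two displayed formulas of the statement are exactly the formulas for $M_N(t)$ and $\langle M_N\rangle(t)$ obtained above, with $Z_N^{(k)}$ rewritten in this form, and the third assertion is the associated Dol\'eans--Dade exponential statement; so the whole proof reduces to (i) checking the identity, and (ii) invoking the exponential-martingale criterion.

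To prove the identity, by linearity of $P\mapsto Z_N^{(k)}(s)$ and of $\mathfrak{D}_s^{(k)}$ it suffices to treat a monomial $P=x_{i_1 j_1}(t_1)\cdots x_{i_\ell j_\ell}(t_\ell)$. Applying the Leibniz rule to $\delta_s^{(k)}$ one letter at a time, only the letters of the form $x_{kj}(t)$ contribute, since $\delta_s^{(k)}x_{ij}(t)=0$ for $i\neq k$; thus $\delta_s^{(k)}P=\sum_{P=Q_1 x_{kj}(t)Q_2}(Q_1\otimes 1)(\delta_s^{(k)}x_{kj}(t))(1\otimes Q_2)$, and after applying $\theta$, using $\theta(A\otimes B)=BA$ together with $v_k(t-s)^* v_k(t-s)=1$, one gets
\[
\mathfrak{D}_s^{(k)}P=-\sum_{\substack{P=Q_1 x_{kj}(t)Q_2\\ s\leq t}} v_k(t-s)^*\,[x_{kj}(t),\,Q_2 Q_1]\,v_k(t-s).
\]
Multiplying by $-\mathrm{i}$, substituting $x_{ij}(t')\mapsto\xi_{ij}^\mathrm{lib}(N)(t')$ and $v_i(t')\mapsto U_N^{(i)}(t'+s)U_N^{(i)}(s)^*$ (so that $v_k(t-s)\mapsto U_N^{(k)}(t)U_N^{(k)}(s)^*$), and then comparing with the second expression for $Z_N^{(k)}(s)$ displayed above --- which was obtained from its first expression via $U_N^{(k)}(s)[\xi_{kj}(N),U_N^{(k)}(t)^* R\,U_N^{(k)}(t)]U_N^{(k)}(s)^*=(U_N^{(k)}(t)U_N^{(k)}(s)^*)^*[\xi_{kj}^\mathrm{lib}(N)(t),R](U_N^{(k)}(t)U_N^{(k)}(s)^*)$ --- yields the identity term by term. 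This step amounts to a bookkeeping of the factor $\mathrm{i}$, of signs, and of the order reversal produced by $\theta$; it is where I would be most careful, and it is the one place in the argument with any real content, though nothing deep happens.

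For the last assertion, set $X_N(t):=N^2 M_N(t)$, which by the first displayed formula is the continuous martingale $X_N(t)=\sum_{k=1}^n\sum_{\alpha',\beta'=1}^N\int_0^t N^{3/2}\,\mathrm{tr}_N(Z_N^{(k)}(s)C_{\alpha'\beta'})\,\mathrm{d}B_{\alpha'\beta'}^{(k)}(s)$ with quadratic variation $\langle X_N\rangle(t)=N^4\langle M_N\rangle(t)=N^2\sum_{k=1}^n\int_0^t\Vert Z_N^{(k)}(s)\Vert_{\mathrm{tr}_N,2}^2\,\mathrm{d}s$, so that $\mathrm{Exp}_N(t)=\exp\big(X_N(t)-\tfrac12\langle X_N\rangle(t)\big)$ is precisely the Dol\'eans--Dade exponential of $X_N$. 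By the identity just proved, $Z_N^{(k)}(s)$ is a conditional expectation of the fixed noncommutative polynomial $-\mathrm{i}\,\mathfrak{D}_s^{(k)}P$ evaluated at matrices of operator norm at most $R$ (the $\xi_{ij}^\mathrm{lib}(N)(t)$, unitary conjugates of the $\xi_{ij}(N)$) and at most $1$ (the unitaries $U_N^{(i)}(t+s)U_N^{(i)}(s)^*$); hence there is a constant $C=C(P,R,n)$, independent of $N$ and $s$, with $\Vert Z_N^{(k)}(s)\Vert_{\mathrm{tr}_N,2}\le\Vert Z_N^{(k)}(s)\Vert\le C$, whence $\langle X_N\rangle(t)\le nC^2 N^2\,t<\infty$ for every $t$. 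In particular Novikov's condition $\mathbb{E}\big[\exp(\tfrac12\langle X_N\rangle(T))\big]<\infty$ holds for all $T>0$, so \cite[Corollary 3.5.13]{KaratzasShreve:Book} applies and shows that $\mathrm{Exp}_N$ is a true martingale; consequently $\mathbb{E}[\mathrm{Exp}_N(t)]=\mathbb{E}[\mathrm{Exp}_N(0)]=1$, which finishes the proof.
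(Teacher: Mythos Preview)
Your proof is correct and follows exactly the approach of the paper: the paper merely states that ``it is easy to confirm'' the identity $Z_N^{(k)}(s)=\mathbb{E}\big[-\mathrm{i}\,(\mathfrak{D}_s^{(k)}P)(\cdots)\mid\mathcal{F}_s\big]$ and then invokes \cite[Corollary 3.5.13]{KaratzasShreve:Book}, whereas you carry out both steps explicitly. Your Leibniz-rule computation of $\mathfrak{D}_s^{(k)}P$ for a monomial and the subsequent matching with the second displayed expression for $Z_N^{(k)}(s)$ are correct, and your deterministic bound on $\Vert Z_N^{(k)}(s)\Vert$ (hence on $\langle X_N\rangle(t)$) is exactly what makes Novikov's criterion trivially applicable.
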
 

For the later use we remark that $-\mathrm{i}\,\mathfrak{D}_s^{(k)}P$ is self-adjoint (since so is $P$), and hence 
\begin{equation}\label{Eq3}
\begin{aligned}
\big\Vert \mathbb{E}\big[(\mathfrak{D}_s^{(k)}P)&(\xi_{\bullet\diamond}^\mathrm{lib}(N)(\cdot)), U_N^{(\bullet)}(\cdot+s)U_N^{(\bullet)}(s)^*)\mid\mathcal{F}_s\big] \big\Vert_{\mathrm{tr}_N,2}^2 \\
&= 
-\mathrm{tr}_N\Big(\mathbb{E}\big[(\mathfrak{D}_s^{(k)}P)(\xi_{\bullet\diamond}^\mathrm{lib}(N)(\cdot)), U_N^{(\bullet)}(\cdot+s)U_N^{(\bullet)}(s)^*)\mid\mathcal{F}_s\big]^2\Big).
\end{aligned}
\end{equation}

\section{Convergence of Conditional Expectation} 

\subsection{Statement} For any given $\tau \in TS^c(C_R^*\big\langle x_{\bullet\diamond}(\cdot)))$ and any $s \geq 0$ we will construct $\tau^s \in  TS^c\big(C_R^*\big\langle x_{\bullet\diamond}(\cdot)\big\rangle\big)$ as follows. Taking a suitable free product, we expand $\big(\pi_\tau\big(C_R^*\big\langle x_{\bullet\diamond}(\cdot)\big\rangle\big),\tau\big)$ to a sufficiently larger tracial $W^*$-probability space, in which we can find $n$ freely independent, left unitary free Brownian motions $v_i^{\tau}$, $1 \leq i \leq n$, that are freely independent of the $x_{ij}^\tau(t)$, $1 \leq j \leq r(i)$, $1 \leq i \leq n+1$, $0 \leq t$ ($\leq s$ if $i \neq n+1$). Then we define new strong-operator continuous processes 
$$
x_{ij}^{\tau^s}(t) := 
\begin{cases} 
v_i^{\tau}((t-s)\vee0)x_{ij}^\tau(t\wedge s)v_i^{\tau}((t-s)\vee0)^* & (1 \leq i \leq n), \\
\qquad\qquad\qquad\ x_{n+1\,j}^\tau(t) & (i=n+1). 
\end{cases}
$$
It is known that there exists a unique $\tau$-preserving conditional expectation $E_s^\tau$ onto the von Neumann subalgebra generated by the $x_{ij}^{\tau^s}(t)$ ($= x_{ij}^\tau(t)$), $1 \leq i \leq n$, $1 \leq j \leq r(i)$, $0 \leq t \leq s$, and the $x_{n+1\,j}^{\tau^s}(t) = x_{n+1\,j}^\tau(t)$, $1 \leq j \leq r(n+1)$, $t \geq 0$,  in the ambient tracial $W^*$-probability space. Via the $*$-homomorphism sending $x_{ij}(t)$  to $x_{ij}^{\tau^s}(t)$, we obtain the desired tracial state $\tau^s \in TS^c\big(C_R^*\big\langle x_{\bullet\diamond}(\cdot)\big\rangle\big)$. 

To each event $\mathcal{E}$, we associate the essential-supremum norm relative to $\mathcal{E}$: 
$$
\Vert X\Vert_{\mathcal{E}} := 
\inf\left\{ L > 0 \mid \mathbb{P}\big(\mathcal{E} \cap \{ |X| > L \}\big) = 0 \right\}
$$ 
for every random variable $X$. Here is the main assertion of this section. 

\begin{theorem}\label{T4.1} For any $\tau \in TS^c(C_R^*\big\langle x_{\bullet\diamond}(\cdot)))$ and $P_1,\dots,P_m \in \mathbb{C}\big\langle x_{\bullet\diamond}(\cdot), v_\bullet(\cdot)\big\rangle$ we have 
\begin{align*} 
\varlimsup_{\varepsilon \searrow 0}\varlimsup_{N\to\infty} 
\sup_{s \geq 0}\Big\Vert \mathrm{tr}_N&\big(
\mathbb{E}\big[P_{1,N}^{(s)}\,\big|\,\mathcal{F}_s\big] \cdots\mathbb{E}\big[P_{m,N}^{(s)}\,\big|\,\mathcal{F}_s\big]\big) - \tau\big(E_s^\tau\big(P^{\tau^s}_1)\cdots E_s^\tau\big(P^{\tau^s}_m\big)\big) 
\Big\Vert_{\mathcal{O}_{\varepsilon}(\tau)} = 0 
\end{align*}
with 
$$ 
P_{k,N}^{(s)} := P_k\big(\xi^\mathrm{lib}_{\bullet\diamond}(N)(\cdot),U_N^{(\bullet)}((\cdot)\vee s)U_N^{(\bullet)}(s)^*\big), \quad P^{\tau^s}_k := P_k\big(x_{\bullet\diamond}^{\tau^s}(\cdot),v_\bullet^{\tau}((\,\cdot\,-s)\vee0)\big)
$$
for $1 \leq k \leq m$ and with $\mathcal{O}_\varepsilon(\tau) := \big\{ d\big(\tau_{\Xi^\mathrm{lib}(N)},\tau\big) < \varepsilon \big\}$, an event. Here we use the same convention such as $\mathbb{E}\big[P_{k,N}\,\big|\,\mathcal{F}_s\big]$ as in section 2. 
\end{theorem}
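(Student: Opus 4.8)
The plan is to exploit the left-increment property of the unitary Brownian motions to replace each conditional expectation $\mathbb{E}[\,\cdot\,|\mathcal{F}_s]$ by an integration over fresh, mutually independent copies of unitary Brownian motions, then to identify the resulting large-$N$ limit at each fixed $s$ by the cited asymptotic-freeness results together with a short computation with amalgamated free independence, and finally to make the convergence uniform in $s$ by L\'{e}vy's combinatorial/It\^{o} machinery. Write $T$ for the largest time index occurring in $P_1,\dots,P_m$.

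\emph{Convergence for a fixed $s$.} Since $U_N^{(i)}(t)=\big(U_N^{(i)}(t)U_N^{(i)}(s)^*\big)U_N^{(i)}(s)$, for $t\ge s$ one has $\xi_{ij}^\mathrm{lib}(N)(t)=W_i(t-s)\,\xi_{ij}^\mathrm{lib}(N)(s)\,W_i(t-s)^*$ with $W_i(\cdot):=U_N^{(i)}(\cdot+s)U_N^{(i)}(s)^*$, and by the left-increment property the $W_i$, $1\le i\le n$, are independent left unitary Brownian motions, independent of $\mathcal{F}_s$. Thus each $\mathbb{E}[P_{k,N}^{(s)}|\mathcal{F}_s]$ is the average of $P_{k,N}^{(s)}$ over the $W_i$ with the $\mathcal{F}_s$-measurable matrices $\xi_{ij}^\mathrm{lib}(N)(t)$ ($t\le s$, $i\le n$) and $\xi_{n+1\,j}(N)$ held fixed, and, using $m$ mutually independent copies $W^{(1)},\dots,W^{(m)}$ of the family $(W_i)_i$,
\[
\mathrm{tr}_N\big(\mathbb{E}\big[P_{1,N}^{(s)}\big|\mathcal{F}_s\big]\cdots\mathbb{E}\big[P_{m,N}^{(s)}\big|\mathcal{F}_s\big]\big)=\widetilde{\mathbb{E}}\Big[\mathrm{tr}_N\big(\textstyle\prod_{k=1}^m P_{k,N}^{(s)}[W^{(k)}]\big)\,\Big|\,\mathcal{F}_s\Big],
\]
where $\widetilde{\mathbb{E}}$ integrates only the $W^{(k)}$ and $P_{k,N}^{(s)}[W^{(k)}]$ uses the $k$-th copy. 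Expanding the $\widetilde{\mathbb{E}}$-average through the heat-kernel (Schur--Weyl) moments of unitary Brownian motion, the right-hand side equals $\Psi_{N,s}(\vec m_N)$, where $\vec m_N$ is the finite vector of joint normalized traces of words in the $\mathcal{F}_s$-measurable matrices (of length bounded in terms of $P_1,\dots,P_m$) and $\Psi_{N,s}$ is a \emph{fixed} polynomial whose coefficients depend only on $N$ and on the relevant time-lags and converge, as $N\to\infty$, uniformly on bounded sets, to a limit $\Psi_{\infty,s}$; moreover $\Psi_{\infty,s}(\vec m(\mu))$ is the value of the corresponding word in a free product of a $\mu$-distributed family with $m$ freely independent copies $v^{(1)},\dots,v^{(m)}$ of a free left unitary Brownian motion, each free from that family (this is precisely the asymptotic freeness of independent left unitary Brownian motions from bounded matrices and from one another, e.g.\ \cite{Biane:Fields97, Levy:AdvMath08, HiaiPetz:Book, CollinsDahlqvistKemp:PTRF1x}). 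On $\mathcal{O}_\varepsilon(\tau)$ one has $\big|\vec m_N-\vec m(\tau)\big|=O(\varepsilon)$ directly from $d\big(\tau_{\Xi^\mathrm{lib}(N)},\tau\big)<\varepsilon$ and the definition of $d$, so \emph{deterministically on $\mathcal{O}_\varepsilon(\tau)$} the left-hand side is within $o_N(1)+O(\varepsilon)$ of $\Psi_{\infty,s}(\vec m(\tau))=\widetilde{\tau}\big(\widehat{P}_1^{\,v^{(1)}}\cdots\widehat{P}_m^{\,v^{(m)}}\big)$, and no concentration inequality is needed at this stage. Finally, since the subalgebras $\mathcal{C}_k$ generated by the $\tau$-algebra $\mathcal{B}$ and $v^{(k)}$ are free with amalgamation over $\mathcal{B}$, a short centering argument gives $E_{\mathcal{B}}[c_1\cdots c_m]=E_{\mathcal{B}}[c_1]\cdots E_{\mathcal{B}}[c_m]$ for $c_k\in\mathcal{C}_k$; applying $\tau$ and using that all copies are identically distributed, so that $E_{\mathcal{B}}\big[\widehat{P}_k^{\,v^{(k)}}\big]=E_s^\tau(P_k^{\tau^s})$, identifies $\widetilde{\tau}\big(\widehat{P}_1^{\,v^{(1)}}\cdots\widehat{P}_m^{\,v^{(m)}}\big)$ with $\tau\big(E_s^\tau(P_1^{\tau^s})\cdots E_s^\tau(P_m^{\tau^s})\big)$, settling the assertion for each fixed $s$.

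\emph{Reduction to a compact time interval.} For $s\ge T$ one has $U_N^{(i)}(t\vee s)U_N^{(i)}(s)^*=I_N$ and $v_i^\tau((t-s)\vee0)=1$ for every index $t$ occurring, so $P_{k,N}^{(s)}$ becomes the $\mathcal{F}_T$-measurable matrix $P_k\big(\xi_{\bullet\diamond}^\mathrm{lib}(N)(\cdot),I_N\big)$ with $\mathbb{E}[P_{k,N}^{(s)}|\mathcal{F}_s]=P_{k,N}^{(s)}$, and $P_k^{\tau^s}$ becomes $P_k\big(x_{\bullet\diamond}^\tau(\cdot),1\big)$ with $E_s^\tau(P_k^{\tau^s})=P_k^{\tau^s}$; hence on $\mathcal{O}_\varepsilon(\tau)$ the quantity under the $\sup$ equals $\tau_{\Xi^\mathrm{lib}(N)}(Q)-\tau(Q)$ for one fixed polynomial $Q$, which is $O(\varepsilon)$ uniformly in $s\ge T$ and in $N$ by the definition of $d$. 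Also $s\mapsto\tau\big(E_s^\tau(P_1^{\tau^s})\cdots E_s^\tau(P_m^{\tau^s})\big)$ is continuous on $[0,T]$, by continuity of the free left unitary Brownian motion and of $s\mapsto E_s^\tau$ in the strong topology. It therefore remains to prove uniform convergence over $s\in[0,T]$.

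\emph{Uniformity in $s$, the main point.} Here I would follow L\'{e}vy's method \cite{Levy:arXiv:1112.2452v2}. The goal is the asymptotic equicontinuity, uniform in $N$ and on $\mathcal{O}_\varepsilon(\tau)$,
\[
\lim_{\delta\searrow0}\ \varlimsup_{N\to\infty}\ \sup_{\substack{s,s'\in[0,T]\\ |s-s'|\le\delta}}\Big\Vert\mathrm{tr}_N\big(\textstyle\prod_k\mathbb{E}[P_{k,N}^{(s)}|\mathcal{F}_s]\big)-\mathrm{tr}_N\big(\textstyle\prod_k\mathbb{E}[P_{k,N}^{(s')}|\mathcal{F}_{s'}]\big)\Big\Vert_{\mathcal{O}_\varepsilon(\tau)}=0.
\]
Granting it, the fixed-$s$ convergence applied on a finite $\delta$-net of $[0,T]$, together with continuity of the limit, yields uniform convergence on $[0,T]$, which combined with the $s\ge T$ case gives the full statement. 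To establish the equicontinuity I would express an $s$-increment of $\mathrm{tr}_N\big(\prod_k P_{k,N}^{(s)}[W^{(k)}]\big)$ through the defining SDEs of the $U_N^{(i)}$ and It\^{o}'s formula, reducing it to stochastic integrals whose $L^2$- and higher-moment norms must be bounded uniformly in $N$; L\'{e}vy's combinatorial expansion of normalized traces of words in unitary Brownian motions into sums over combinatorial data with coefficients controlled independently of $N$, together with $\Vert\xi_{ij}(N)\Vert\le R$ to dominate the matrix insertions, supplies those bounds. The genuine difficulty is to run these combinatorial and It\^{o} estimates while the moving conditional expectations $\mathbb{E}[\,\cdot\,|\mathcal{F}_s]$ are present --- that is, to interchange the stochastic-calculus bookkeeping with the conditioning --- and to keep every constant uniform over $s\in[0,T]$; the scaling factor $1/N^2$ built into the theorem is exactly what makes the resulting bounds of the right order.
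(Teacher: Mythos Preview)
Your fixed-$s$ argument is correct and, in one respect, cleaner than the paper's: you identify the limit directly via the Schur--Weyl/heat-kernel expansion and an amalgamated-freeness factorization $E_{\mathcal{B}}[c_1\cdots c_m]=E_{\mathcal{B}}[c_1]\cdots E_{\mathcal{B}}[c_m]$, whereas the paper takes a longer detour (splitting each $V_N^{(i)}$ into increments of length $\le 1/3$, realizing each increment as $f_t(G)$ for a Gaussian matrix $G$ via \cite{CollinsDahlqvistKemp:PTRF1x}, polynomially approximating $f_t$, and then comparing with explicit Gaussian and semicircular formulas, Lemmas~4.3--4.4). Your reduction to $s\in[0,T]$ is also fine.

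The genuine divergence is in the uniformity step, and here your proposal has a gap you yourself flag. You aim for asymptotic equicontinuity of $s\mapsto\mathrm{tr}_N\big(\prod_k\mathbb{E}[P_{k,N}^{(s)}|\mathcal{F}_s]\big)$ via It\^{o} calculus, but controlling $s$-increments when both the integrand $P_{k,N}^{(s)}$ and the conditioning $\sigma$-field $\mathcal{F}_s$ move is exactly the delicate bookkeeping you leave open. The paper avoids this entirely. It shows (Proposition~4.6, a mild extension of \cite[Proposition~3.5]{Levy:arXiv:1112.2452v2}) that for each $s$ one has
\[
\Big\Vert\,\mathbb{E}_W\big[\widehat{\Pi}_sP_k\big]-\sum_{\sigma\in\mathfrak{S}_{\ell_k+1}}c_\sigma(P_k)\,\mathrm{tr}_{\sigma^{-1}}\big[\,\cdots,*\,\big]\,\Big\Vert_{M_N(\mathbb{C})}\le\frac{C}{N^2},\qquad |c_\sigma(P_k)|\le C,
\]
where the constant $C$ depends only on $T$ and $L(P_k)$, \emph{not on $s$}, because the rewriting $\widehat{\Pi}_sP_k$ has length $\le 9L(P_k)^2$ and all time-lags lie in $[0,T]$. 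Feeding this into the product and comparing the resulting polynomial in $\tau_{\Xi^\mathrm{lib}(N)}$-moments with the same polynomial in $\tau$-moments yields a bound $C_1/N^2+C_2\,d(\tau_{\Xi^\mathrm{lib}(N)},\tau)$ with $C_1,C_2$ independent of $s$; the fixed-$s$ result is then invoked only to identify the $\tau$-polynomial as $\tau\big(E_s^\tau(P_1^{\tau^s})\cdots E_s^\tau(P_m^{\tau^s})\big)$. In short: the L\'{e}vy expansion you already use at fixed $s$ comes with $s$-uniform constants, and that is all the uniformity one needs---no equicontinuity argument is required. Your proposal would be complete if you simply observed this and dropped the equicontinuity plan.
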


By definition, $\mathfrak{D}_s^{(k)}P$ with $P \in \mathbb{C}\big\langle x_{\bullet\diamond}(\cdot)\big\rangle$ is a linear combination of monomials of the form $\mathbf{1}_{[0,t]}(s)\,v_k(t-s)^* Q\,v_k(t-s)$ with fixed $Q \in \mathbb{C}\big\langle x_{\bullet\diamond}(\cdot)\big\rangle$ and $t \geq 0$. Hence the next corollary immediately follows from Theorem \ref{T4.1}.   

\begin{corollary}\label{C4.2} For any $\tau \in TS^c\big(C^*\big\langle x_{\bullet\diamond}(\cdot)\big\rangle\big)$ and $P \in \mathbb{C}\big\langle x_{\bullet\diamond}(\cdot)\big\rangle$ we have  
\begin{align*} 
\varlimsup_{\varepsilon \searrow 0}\varlimsup_{N\to\infty} 
\sup_{s \geq 0}\Big\Vert \mathrm{tr}_N\big(
\mathbb{E}\big[(\mathfrak{D}_s^{(k)}P)\big(\xi^\mathrm{lib}_{\bullet\diamond}(N)(\cdot),U_N^{(\bullet)}(\cdot+s)U_N^{(\bullet)}(s)^*\big)\,\big|\,\mathcal{F}_s\big]^2&\big) \\
-\,\tau\big(E_s^\tau\big((\mathfrak{D}_s^{(k)}P)\big(x_{\bullet\diamond}^{\tau^s}(\cdot),v_\bullet^{\tau}(\cdot)\big)\big)^2&\big) \Big\Vert_{\mathcal{O}_{\varepsilon}(\tau)} = 0 
\end{align*}
for every $1 \leq k \leq n$.  
\end{corollary}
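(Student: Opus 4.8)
The plan is to deduce Corollary \ref{C4.2} directly from Theorem \ref{T4.1} in the case $m=2$, exploiting the explicit shape of $\mathfrak{D}_s^{(k)}P$ recorded just above its statement. First I would write $\mathfrak{D}_s^{(k)}P=\sum_{l}c_l\,\mathbf{1}_{[0,t_l]}(s)\,v_k(t_l-s)^*\,Q_l\,v_k(t_l-s)$, a \emph{finite} linear combination with scalars $c_l\in\mathbb{C}$, times $t_l\geq 0$, and fixed $Q_l\in\mathbb{C}\langle x_{\bullet\diamond}(\cdot)\rangle$, where the triple $(c_l,t_l,Q_l)$ does not depend on $s$ (only the indicators and the arguments $t_l-s$ of $v_k$ do). Then I would set, for each $l$, the $s$-independent element $P_l:=v_k(t_l)^*\,Q_l\,v_k(t_l)\in\mathbb{C}\langle x_{\bullet\diamond}(\cdot),v_\bullet(\cdot)\rangle$ and verify the bookkeeping identities
\begin{align*}
(\mathfrak{D}_s^{(k)}P)\big(\xi^\mathrm{lib}_{\bullet\diamond}(N)(\cdot),U_N^{(\bullet)}(\cdot+s)U_N^{(\bullet)}(s)^*\big)&=\sum_l c_l\,\mathbf{1}_{[0,t_l]}(s)\,P_{l,N}^{(s)}, \\
(\mathfrak{D}_s^{(k)}P)\big(x_{\bullet\diamond}^{\tau^s}(\cdot),v_\bullet^{\tau}(\cdot)\big)&=\sum_l c_l\,\mathbf{1}_{[0,t_l]}(s)\,P_l^{\tau^s},
\end{align*}
with $P_{l,N}^{(s)}$ and $P_l^{\tau^s}$ formed from $P_l$ exactly as in Theorem \ref{T4.1}. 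This is immediate: on the set $\{s\leq t_l\}$ one has $U_N^{(k)}(t_l\vee s)U_N^{(k)}(s)^*=U_N^{(k)}(t_l)U_N^{(k)}(s)^*$ and $v_k^{\tau}((t_l-s)\vee 0)=v_k^{\tau}(t_l-s)$, so the two substitutions appearing in Corollary \ref{C4.2} carry the $l$-th monomial of $\mathfrak{D}_s^{(k)}P$ exactly onto $P_{l,N}^{(s)}$ and onto $P_l^{\tau^s}$, respectively; on $\{s>t_l\}$ the scalar $\mathbf{1}_{[0,t_l]}(s)$ kills both sides.

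Next I would apply $\mathbb{E}[\,\cdot\mid\mathcal{F}_s]$ — which is linear and pulls out the deterministic scalars $\mathbf{1}_{[0,t_l]}(s)$ — to the first identity and $E_s^\tau$ (linear) to the second, then square both outcomes and apply $\mathrm{tr}_N$ and $\tau$, respectively. Both $\mathrm{tr}_N(\mathbb{E}[\,\cdot\mid\mathcal{F}_s]^2)$ and $\tau(E_s^\tau(\,\cdot\,)^2)$ thereby become the \emph{same} finite double sum over $(l,l')$, with common scalar weights $c_l c_{l'}\mathbf{1}_{[0,t_l]}(s)\mathbf{1}_{[0,t_{l'}]}(s)$ multiplying $\mathrm{tr}_N(\mathbb{E}[P_{l,N}^{(s)}\mid\mathcal{F}_s]\mathbb{E}[P_{l',N}^{(s)}\mid\mathcal{F}_s])$ and $\tau(E_s^\tau(P_l^{\tau^s})E_s^\tau(P_{l'}^{\tau^s}))$, respectively. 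Subtracting termwise and using that $\Vert\cdot\Vert_{\mathcal{O}_\varepsilon(\tau)}$ is subadditive and absolutely homogeneous, together with $\mathbf{1}_{[0,t_l]}(s)\mathbf{1}_{[0,t_{l'}]}(s)\in[0,1]$, I would bound the expression inside $\Vert\cdot\Vert_{\mathcal{O}_\varepsilon(\tau)}$ of Corollary \ref{C4.2} by $\sum_{l,l'}|c_l||c_{l'}|$ times the expression inside $\Vert\cdot\Vert_{\mathcal{O}_\varepsilon(\tau)}$ of Theorem \ref{T4.1} with $m=2$, $P_1=P_l$, $P_2=P_{l'}$. Taking $\sup_{s\geq0}$, then $\varlimsup_{N\to\infty}$, then $\varlimsup_{\varepsilon\searrow0}$, each of which passes through the finite sum over $(l,l')$, every summand vanishes by Theorem \ref{T4.1}, and the corollary follows.

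I do not expect a genuine obstacle: the entire analytic substance lies in Theorem \ref{T4.1}, and the square in $\mathrm{tr}_N(\mathbb{E}[(\mathfrak{D}_s^{(k)}P)(\cdots)\mid\mathcal{F}_s]^2)$ is precisely what calls for the $m=2$ case. The only points that need mild care are (i) reconciling the two slightly different time conventions (on the matrix side, $U_N^{(\bullet)}((\cdot)\vee s)U_N^{(\bullet)}(s)^*$ versus $U_N^{(\bullet)}(\cdot+s)U_N^{(\bullet)}(s)^*$; on the free side, $v_\bullet^{\tau}((\cdot-s)\vee 0)$ versus $v_\bullet^{\tau}(\cdot)$), which is harmless exactly because every monomial of $\mathfrak{D}_s^{(k)}P$ carries the indicator $\mathbf{1}_{[0,t]}(s)$ restricting $s$ to $s\leq t$, and (ii) recording the elementary properties of the seminorm $\Vert\cdot\Vert_{\mathcal{O}_\varepsilon(\tau)}$ — triangle inequality, absolute homogeneity, and monotonicity under multiplication by a $[0,1]$-valued deterministic factor — that make the finite-sum estimate legitimate.
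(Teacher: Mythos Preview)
Your proposal is correct and follows exactly the approach indicated in the paper, which simply observes that $\mathfrak{D}_s^{(k)}P$ is a finite linear combination of terms $\mathbf{1}_{[0,t]}(s)\,v_k(t-s)^*Q\,v_k(t-s)$ with $Q\in\mathbb{C}\langle x_{\bullet\diamond}(\cdot)\rangle$ and declares the corollary to follow immediately from Theorem \ref{T4.1}. You have merely written out the details of that reduction: the passage to the $s$-independent $P_l=v_k(t_l)^*Q_l v_k(t_l)$, the reconciliation of the two time-shift conventions via the indicators, the bilinear expansion of the square, and the termwise application of the $m=2$ case of Theorem \ref{T4.1} using subadditivity and homogeneity of $\Vert\,\cdot\,\Vert_{\mathcal{O}_\varepsilon(\tau)}$.
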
 

\subsection{Proof of Theorem \ref{T4.1}} 

The proof is divided into two steps; we first prove in subsection 4.2.1 that 
\begin{align*} 
\varlimsup_{\varepsilon \searrow 0}\varlimsup_{N\to\infty} 
\Big\Vert \mathrm{tr}_N&\big(
\mathbb{E}\big[P_{1,N}^{(s)}\,\big|\,\mathcal{F}_s\big] \cdots\mathbb{E}\big[P_{m,N}^{(s)}\,\big|\,\mathcal{F}_s\big]\big) - \tau\big(E_s^\tau\big(P^{\tau^s}_1)\cdots E_s^\tau\big(P^{\tau^s}_m\big)\big) 
\Big\Vert_{\mathcal{O}_{\varepsilon}(\tau)} = 0 
\end{align*}
for each fixed $s \geq 0$, and then in subsection 4.2.2 that the convergence is actually uniform in time $s$. This strategy is motivated by L\'{e}vy's work \cite{Levy:arXiv:1112.2452v2}, and indeed his method is crucial in subsection 4.2.2. A slight generalization of what L\'{e}vy established in \cite{Levy:arXiv:1112.2452v2} is necessary, and thus we will explain it in subsection 4.3 for the reader's convenience. 

Note that all the $P_k$ is `supported' in a finite time interval $[0,T]$, that is, the letters appearing in those $P_k$ are from the $x_{ij}(t)$ and $v_i(t)$ with $t \leq T$. Note also that we may and do assume that all the given $P_k$ are monomials. 

\subsubsection{Convergence at each time $s$}   

Choose another independent $n$-tuple $V_N^{(i)}$ of $N\times N$ left unitary Brownian motions that are independent of the original $n$-tuple $U_N^{(i)}$. Denote by $\mathbb{E}_V$ the expectation only in the stochastic processes $V_N^{(i)}$. 
Define 
$$
\xi_{ij}^\mathrm{lib}(N)^V_s(t) := 
\begin{cases} 
V_N^{(i)}((t-s)\vee0)\,\xi_{ij}^\mathrm{lib}(N)(t \wedge s)\,V_N^{(i)}((t-s)\vee0)^* & (1 \leq i \leq n), \\
\qquad\qquad \xi_{n+1\,j}^\mathrm{lib}(N)(t) = \xi_{n+1\,j}(N) & (i = n+1). 
\end{cases}
$$
Then it is not hard to see that 
\begin{equation*}
\mathbb{E}\big[P_{k,N}^{(s)}\,\big|\,\mathcal{F}_s\big] = \mathbb{E}_V\big[P_k(\xi_{\bullet\diamond}^\mathrm{lib}(N)^V_s(\cdot),V_N^{(\bullet)}((\,\cdot\,-s)\vee0))\big] 
\end{equation*}
due to the left increment property of left unitary Brownian motions. 

Note that $P^V_{k,s,N} := P_k(\xi_{\bullet\diamond}^\mathrm{lib}(N)^V_s(\cdot),V_N^{(\bullet)}((\,\cdot\,-s)\vee0))$ depends only on a finite number of $V_N^{(i)}(t)$ because we have fixed $s$. Each of those $V_N^{(i)}(t)$ is written as 
$$
V_N^{(i)}(t) = W_N^{(i,k)}(t-(k/3))W_N^{(i,k-1)}(1/3)\cdots W_N^{(i,0)}(1/3) \quad \text{or} \quad W_N^{(i,0)}(t)
$$
with $W_N^{(i,l)}(t) := V_N^{(i)}(t+(l/3))V_N^{(i)}(l/3)^*$, $0 \leq t \leq 1/3$. Note that those $W_N^{(i,l)}(t)$ ($0 \leq t \leq 1/3$) become independent, $N\times N$ left unitary Brownian motions. In this way, we may think of $P^V_{k,s,N}$ as a monomial in some $\xi_{ij}^\mathrm{lib}(N)(t)$ (with $t \leq s$ as long as $i \neq n+1$) and some $W_N^{(i,l)}(t),W_N^{(i,l)}(t)^*$ with $0 \leq t \leq 1/3$. Accordingly, we write  $w_{i,l}^{\tau}(t) := v_i^{\tau}(t+(l/3))v_i^{\tau}(l/3)^*$, $0 \leq t \leq 1/3$, $l \in \mathbb{N}$, which become left free unitary Brownian motions. Then $P^{\tau^s}_k = P_k(x_{\bullet\diamond}^{\tau^s}(\cdot),v_\bullet^{\tau}((\,\cdot\,-s)\vee0))$ is also the same monomial as $P^V_{k,s,N}$ with the substitution of $x_{ij}^{\tau}(t)$ and $w_{i,l}^{\tau}(t)$ for $\xi_{ij}^\mathrm{lib}(N)(t)$ and $W_N^{(i,l)}(t)$, respectively. Consequently, it suffices, for the purpose here, to prove that
\begin{equation}\label{Eq4} 
\varlimsup_{\varepsilon \searrow 0}\varlimsup_{N\to\infty} 
\Big\Vert \mathrm{tr}_N\big(\mathbb{E}_W\big[Q_{1,N}\big]\cdots\mathbb{E}_W\big[Q_{m,N}\big]\big) - \tau\big(E_s^\tau(Q^\tau_1)\cdots E_s^\tau(Q^\tau_m)\big) \Big\Vert_{\mathcal{O}_{\varepsilon}(\tau)} = 0 
\end{equation}
with
$$
Q_{k,N} := Q_k(\xi_{\bullet\diamond}^\mathrm{lib}(N)(\cdot),W_N^{(\bullet,\circ)}(\cdot)), \quad 
Q^\tau_k :=  Q_k(x_{\bullet\diamond}^{\tau}(\cdot),w_{\bullet,\circ}^{\tau}(\cdot)), \quad 1 \leq k \leq m
$$
for any given monomials $Q_1,\dots,Q_m$ in indeterminates $x_{ij}(t)$ (with $0 \leq t \leq s$ as long as $i\neq n+1$), $w_{i,l}(t)$, $w_{i,l}(t)^*$ with $0 < t \leq 1/3$, where $\mathbb{E}_W$ denotes the expectation only in the stochastic processes $W_N^{(i,l)}$ and also $Q_{k,N}$ and $Q_k^\tau$ are defined similarly as above. 

\medskip 
Note that the given monomials $Q_1,\dots,Q_m$ depend only on a finite number of indeterminates $x_{i_1 j_1}(t_1),\dots,x_{i_p j_p}(t_p),x_{n+1\,j_{p+1}}(t_{p+1}),\dots,x_{n+1\,j_{p'}}(t_{p'})$ (with $1 \leq i_1,\dots,i_p \leq n$, $0 \leq t_1,\dots,t_p \leq s$) and $w_{i_1 l_1}(t_1),\dots,w_{i_q l_q}(t_q)$ (with $0 < t_1,\dots,t_q \leq 1/3$). As in \cite[section 4]{CollinsDahlqvistKemp:PTRF1x} we may and do write $w_{i_k l_k}^{\tau}(t_k) = f_{t_k}(g_{i_k l_k})$, where $f_{t_k}$ is a continuous function from the real line $\mathbb{R}$ to the $1$-dimensional torus $\mathbb{T}$ (depending only on the time $t_k$) and a standard semicircular system $g_{i_1 l_1},\dots,g_{i_q l_q}$, which is freely independent of $x_{i_1 j_1}^{\tau}(t_1),\dots,x_{i_p j_p}^{\tau}(t_p)$ and $x_{n+1\,j_{p+1}}^{\tau}(t_{p+1}),\dots,x_{n+1\, j_p}^{\tau}(t_p)$. Accordingly, by \cite[Proposition 4.3]{CollinsDahlqvistKemp:PTRF1x} we can choose an independent family of $N\times N$ standard Gaussian self-adjoint random matrices $G_N^{(i_1,l_1)},\dots,G_N^{(i_q,l_q)}$ in such a way that they are independent of the $U_N^{(i_1)}(t_1),\dots,U_N^{(i_p)}(t_p)$ (corresponding to indeterminates $x_{i_1 j_1}(t_1),\dots,x_{i_p j_p}(t_p)$) and the operator norm $\Vert W_N^{(i_k,l_k)}(t_k) - f_{t_k}(G_N^{(i_k,l_k)}) \Vert_{M_N(\mathbb{C})} \to 0$ almost surely as $N \to \infty$. For any $\mathbf{x}, \mathbf{y} \in \mathbb{C}^N$ with $\Vert\mathbf{x}\Vert_{\mathbf{C}^N} \leq 1$, $\Vert\mathbf{y}\Vert_{\mathbb{C}^N} \leq 1$, we have  
\begin{align*}
&\Big|\Big(\Big(\mathbb{E}_W\big[Q_k(\xi_{\bullet\diamond}^\mathrm{lib}(N)(\cdot),W_N^{(\bullet,\circ)}(\cdot))\big] - 
\mathbb{E}_G\big[Q_k(\xi_{\bullet\diamond}^\mathrm{lib}(N)(\cdot),
f_{(\cdot)}(G_N^{(\bullet,\circ)}(\cdot)))\big]\Big)\mathbf{x}\,\Big|\,\mathbf{y}\Big)_{\mathbb{C}^N}\Big| \\
&= 
\Big|\Big(\mathbb{E}_{W\cup G}\big[\xi_{\bullet\diamond}^\mathrm{lib}(N)(\cdot),W_N^{(\bullet,\circ)}(\cdot)) - Q_k(\xi_{\bullet\diamond}^\mathrm{lib}(N)(\cdot),f_{(\cdot)}(G_N^{(\bullet,\circ)}(\cdot)))\big]\mathbf{x}\,\Big|\,\mathbf{y}\Big)_{\mathbb{C}^N}\Big| \\
&\leq 
 \mathbb{E}_{W\cup G}\Big[\big|\big(\big(Q_k(\xi_{\bullet\diamond}^\mathrm{lib}(N)(\cdot),W_N^{(\bullet,\circ)}(\cdot)) - Q_k(\xi_{\bullet\diamond}^\mathrm{lib}(N)(\cdot),f_{(\cdot)}(G_N^{(\bullet,\circ)}(\cdot)))\big)\mathbf{x}\,\big|\,\mathbf{y}\big)_{\mathbb{C}^N}\big|\Big] \\
&\leq 
\mathbb{E}_{W\cup G}\Big[\big\Vert Q_k(\xi_{\bullet\diamond}^\mathrm{lib}(N)(\cdot),W_N^{(\bullet,\circ)}(\cdot)) - Q_k(\xi_{\bullet\diamond}^\mathrm{lib}(N)(\cdot),f_{(\cdot)}(G_N^{(\bullet,\circ)}(\cdot)))\big\Vert_{M_N(\mathbb{C})}\Big]
\end{align*}
with the expectations $\mathbb{E}_G$ and $\mathbb{E}_{W\cup G}$ only in the variables $G_N^{(i,l)}$ and the $W_N^{(i,l)}(t)$, $G_N^{(i,l)}$, respectively. Hence we conclude that 
\begin{align*}
&\Big\Vert \mathbb{E}_W\big[Q_k(\xi_{\bullet\diamond}^\mathrm{lib}(N)(\cdot),W_N^{(\bullet,\circ)}(\cdot))\big] - 
\mathbb{E}_G\big[Q_k(\xi_{\bullet\diamond}^\mathrm{lib}(N)(\cdot),f_{(\cdot)}(G_N^{(\bullet,\circ)}(\cdot)))\big]\Big\Vert_{M_N(\mathbb{C})} \\
&\leq 
 \mathbb{E}_{W\cup G}\Big[\big\Vert Q_k(\xi_{\bullet\diamond}^\mathrm{lib}(N)(\cdot),W_N^{(\bullet,\circ)}(\cdot)) - Q_k(\xi_{\bullet\diamond}^\mathrm{lib}(N)(\cdot),f_{(\cdot)}(G_N^{(\bullet,\circ)}(\cdot)))\big\Vert_{M_N(\mathbb{C})}\Big] \\
&\leq \mathrm{Const.}\,\mathbb{E}\Big[\max_{1\leq k \leq q}\Vert W_N^{(i_k,l_k)}(t_k) - f_{t_k}(G_N^{(i_k,l_k)}) \Vert_{M_N(\mathbb{C})}\Big], 
\end{align*}
since $\big\Vert \xi_{ij}^\mathrm{lib}(N)(t)\big\Vert_{M_N(\mathbb{C})} \leq R$ and since all the $W_N^{(i,l)}(t)$ and $f_t(G_N^{(i,l)}(t))$ are unitary matrices. Since $\max_{1\leq k \leq q}\Vert W_N^{(i_k,l_k)}(t_k) - f_{t_k}(G_N^{(i_k,l_k)}) \Vert_{M_N(\mathbb{C})} \to 0$ almost surely as $N\to\infty$, we conclude that 
\begin{equation}\label{Eq5}
\begin{aligned}
\lim_{N\to\infty}\Big\Vert \mathrm{tr}_N\big(&\mathbb{E}_W\big[Q_{1,N}\big]\cdots\mathbb{E}_W\big[Q_{m,N}\big]\big) \\ 
&- 
\mathrm{tr}_N\big(\mathbb{E}_{W\cup G}\big[Q_{1,N}^{(f_*(G_*))}\big]\cdots\mathbb{E}_{W\cup G}\big[Q_{m,N}^{(f_*(G_*))}\big]\big)\big)\Big\Vert_\infty = 0
\end{aligned}
\end{equation}
with 
$$
Q_{k,N}^{(f_*(G_*))} := Q_k(\xi_{\bullet\diamond}^\mathrm{lib}(N)(\cdot),f_{(\cdot)}(G_N^{(\bullet,\circ)}(\cdot))), 
$$
where $\Vert\,-\,\Vert_\infty$ denotes the essential-supremum norm.  

\medskip
For a given $0 < \delta \leq 1$, the Weierstrass theorem enables us to choose a polynomial $p_{t_k}$ so that the supremum norm $\Vert p_{t_k} - f_{t_k} \Vert_{[-3,3]}$ over the interval $[-3,3]$ is not greater than $\delta$. For a while, we fix such polynomials $p_{t_k}$, $1 \leq k \leq q$. Since $w_{i_k l_k}^{\tau}(t_k) = f_{t_k}(g_{i_k l_k})$ and $\Vert g_{i_k l_k} \Vert \leq 2$, it immediately follows that there exists a positive constant $C > 0$ such that 
\begin{equation}\label{Eq6}
\Big| \tau\big(E_s^\tau(Q^\tau_1)\cdots E_s^\tau(Q^\tau_m)\big) - \tau\big(E_s^\tau\big(Q^{(\tau,p_*(g_*))}_1\big)\cdots E_s^\tau\big(Q^{(\tau,p_*(g_*))}_m\big)\big) \Big| \leq C\delta 
\end{equation}
with 
$$
Q_k^{(\tau,p_*(g_*))} := Q_k\big(x^\tau_{\bullet\diamond}(\cdot),p_{(\cdot)}(g_{\bullet\circ})\big), \quad 1\leq k \leq m. 
$$
Consider the event 
$$
\mathcal{E}_N := 
\bigcap_{k=1}^q \big\{ \big\Vert G_N^{(i_k,l_k)} \big\Vert_{M_N(\mathbb{C})} \leqq 3  \big\},
$$
whose probability $\mathbb{P}(\mathcal{E}_N)$ is known to converge to $1$ as $N\to\infty$ (see e.g., \cite[subsection 5.5]{AndersonGuionnetZeitouni-Book} and references therein). Similarly as above we can find a universal constant $C' > 0$ so that
\begin{equation}\label{Eq7}  
\begin{aligned}
\Big| \mathrm{tr}_N\big(\mathbb{E}_G&\big[\mathbf{1}_{\mathcal{E}_N}\,Q_{1,N}^{(f_*(G_*))} \big]\cdots\mathbb{E}_G\big[\mathbf{1}_{\mathcal{E}_N}\,Q_{m,N}^{(f_*(G_*))} \big]\big) \\
&- \mathrm{tr}_N\big(\mathbb{E}_G\big[\mathbf{1}_{\mathcal{E}_N}\,Q_{1,N}^{(p_*(G_*))} \big]\cdots\mathbb{E}_G\big[\mathbf{1}_{\mathcal{E}_N}\,Q_{m,N}^{(p_*(G_*))} \big]\big) \Big| \leq C'\delta,  
\end{aligned}
\end{equation}
with 
$$
Q_{k,N}^{(p_*(G_*))} := Q_k(\xi_{\bullet\diamond}^\mathrm{lib}(N)(\cdot),p_{(\cdot)}(G_N^{(\bullet,\circ)}(\cdot))),
$$
since $\big\Vert f_{t_k}(G_N^{(i_k,l_k)}) - p_{t_k}(G_N^{(i_k,l_k)})\big\Vert_{M_N(\mathbb{C})} \leq \delta$ on the event $\mathcal{E}_N$. By the `Cauchy--Schwarz inequality' for matricial expectations (see Remark \ref{R4.5} below), we have
\begin{equation}\label{Eq8}  
\begin{aligned}
&\Big\Vert \mathbb{E}_G\big[\mathbf{1}_{\Omega\setminus\mathcal{E}_N}\,Q_{k,N}^{(p_*(G_*))}\big]\Big\Vert_{M_N(\mathbb{C})} \\
&\leq 
\Big\Vert \mathbb{E}_G\big[\mathbf{1}_{\Omega\setminus\mathcal{E}_N}I_N\big] \Big\Vert_{M_N(\mathbf{C})}^{1/2} 
\Big\Vert \mathbb{E}_G\big[(Q_{k,N}^{(p_*(G_*))})^*\,Q_{k,N}^{(p_*(G_*))}\big]\Big\Vert_{M_N(\mathbb{C})}^{1/2} \\
&= 
\big(1-\mathbb{P}(\mathcal{E}_N)\big)^{1/2} 
\Big\Vert \mathbb{E}_G\big[(Q_{k,N}^{(p_*(G_*))})^*\,Q_{k,N}^{(p_*(G_*))}\big]\Big\Vert_{M_N(\mathbb{C})}^{1/2}  
\end{aligned}
\end{equation}
and similarly
\begin{equation}\label{Eq9}
\begin{aligned}
&\Big\Vert\mathbb{E}_G\big[\mathbf{1}_{\Omega\setminus\mathcal{E}_N}\,Q_{k,N}^{(f_*(G_*))}\big]\Big\Vert_{M_N(\mathbb{C})} \\
&\leq  
\big(1-\mathbb{P}(\mathcal{E}_N)\big)^{1/2} 
\Big\Vert \mathbb{E}_G\big[(Q_{k,N}^{(f_*(G_*))})^*\,Q_{k,N}^{(f_*(G_*))}\big]\Big\Vert_{M_N(\mathbb{C})}^{1/2} \\
&\leq  
\big(1-\mathbb{P}(\mathcal{E}_N)\big)^{1/2} 
\mathbb{E}_G\Big[\big\Vert(Q_{k,N}^{(f_*(G_*))})^*\,Q_{k,N}^{(f_*(G_*))}\big\Vert_{M_N(\mathbb{C})}\Big]^{1/2} \\
&\leq 
C_k'' \big(1-\mathbb{P}(\mathcal{E}_N)\big)^{1/2}  
\end{aligned}
\end{equation}
with some constant $C_k'' > 0$ depending only on $Q_k$, since the $f_{t_k}(G_N^{(i_k,l_k)})$ are unitary matrices and $\Vert \xi_{ij}^\mathrm{lib}(N)(t)\Vert_{M_N(\mathbb{C})} \leq R$. Since $\mathbb{P}(\mathcal{E}_N) \to 1$ as $N\to\infty$ as remarked before, we need to prove that 
\begin{equation}\label{Eq10} 
\sup_{N\in\mathbb{N}} \max_{1 \leq k \leq m} \Big\Vert \mathbb{E}_G\big[(Q_{k,N}^{(p_*(G_*))})^*\,Q_{k,N}^{(p_*(G_*))}\big]\Big\Vert_{M_N(\mathbb{C})} < +\infty
\end{equation}
and 
\begin{equation}\label{Eq11} 
\begin{aligned}
\varlimsup_{\varepsilon \searrow 0}\varlimsup_{N\to\infty} 
\Big\Vert \mathrm{tr}_N\big(\mathrm{tr}_N\big(\mathbb{E}_G&\big[Q_{1,N}^{(p_*(G_*))} \big]\cdots\mathbb{E}_G\big[Q_{m,N}^{(p_*(G_*))} \big]\big)\big) \\
&- 
\tau\big(E_s^\tau\big(Q^{(\tau,p_*(g_*))}_1\big)\cdots E_s^\tau\big(Q^{(\tau,p_*(g_*))}_m\big)\big)\Big\Vert_{\mathcal{O}_{\varepsilon}(\tau)} = 0,
\end{aligned}
\end{equation}
both of which are similar to what Biane, Capitaine and Guionnet proved in \cite[section 4]{BianeCapitaineGuionnet:InventMath03}. However, we will give more `exact' proofs to them later for the sake of completeness. In fact, \eqref{Eq8} and \eqref{Eq10} imply that 
\begin{equation}\label{Eq12}
\lim_{N\to\infty}\Big\Vert \big\Vert \mathbb{E}_G\big[\mathbf{1}_{\Omega\setminus\mathcal{E}_N}\,Q_{k,N}^{(p_*(G_*))}\big] \big\Vert_{M_N(\mathbb{C})} \Big\Vert_\infty = 0, \quad 1\leq k \leq m, 
\end{equation}
and moreover, by \eqref{Eq9}
\begin{equation}\label{Eq13}
\lim_{N\to\infty}\Big\Vert \big\Vert \mathbb{E}_G\big[\mathbf{1}_{\Omega\setminus\mathcal{E}_N}\,Q_{k,N}^{(f_*(G_*))}\big] \big\Vert_{M_N(\mathbb{C})} \Big\Vert_\infty = 0, \quad 1\leq k \leq m. 
\end{equation}
Remark that $\big\Vert \Vert \mathbf{1}_{\mathcal{E}_N}\,p_{t_k}(G_N^{(i_k,l_k)})\Vert_{M_N(\mathbb{C})}\big\Vert_\infty \leq \Vert p_{t_k}\Vert_{[-3,3]} \leq 1+ \delta \leq 2$. By \eqref{Eq5}--\eqref{Eq7} and \eqref{Eq11}--\eqref{Eq13}, we have 
\begin{align*} 
\varlimsup_{\varepsilon\searrow 0}\varlimsup_{N\to\infty} 
\Big\Vert \mathrm{tr}_N\big(\mathbb{E}_W\big[Q_{1,N}\big]\cdots\mathbb{E}_W\big[Q_{m,N}\big]\big) - \tau\big(E_s^\tau(Q^\tau_1)\cdots E_s^\tau(Q^\tau_m)\big) \Big\Vert_{\mathcal{O}_{\varepsilon}(\tau)}  
\leq (C' + C)\delta. 
\end{align*}
Hence \eqref{Eq4} follows because $\delta > 0$ can arbitrarily be small and both $C, C'$ are independent of the choice of $\delta > 0$. Hence we have completed the first step expect showing \eqref{Eq10} and \eqref{Eq11}. \qed

\bigskip
Here, we prove \eqref{Eq10} and \eqref{Eq11}. We need two simple lemmas, which are of independent interest because they are very explicit. 

\begin{lemma}\label{L4.3} Let $G^{(i)}$ be an independent sequence of $N \times N$ standard Gaussian self-adjoint random matrices, and $A^{(i)}$ be an sequence of $N \times N$ deterministic matrices. Then we have 
\begin{align*}
\mathbb{E}\big[G^{(p(1))}&A^{(q(1))}\cdots A^{(q(\ell-1))}G^{(p(\ell))}\big] \\
&= 
\sum_{\pi \in \mathcal{P}_2^p(\ell)} N^{\sharp(\gamma_\ell\pi) - 1 - \ell/2}\,\mathrm{tr}_{\pi\gamma_\ell}\big[A^{(q(1))},\dots,A^{(q(\ell-1))},*\,\big].  
\end{align*}
Here, $\mathcal{P}_2^p(\ell)$ is the set of all permutations $\pi$ with $p\circ\pi = p$ whose cycle decompositions consist only of transpositions, $\gamma_\ell$ denotes $(1,2,\dots,\ell)$, $\sharp(\pi\gamma_\ell)$ is the number of cycles in $\pi\gamma_\ell$, and finally $\mathrm{tr}_{\sigma}\big[A^{(q(1))},\dots,A^{(q(\ell-1))},*\big]$, $\sigma \in \mathfrak{S}_\ell$, is defined as follows. If $\sigma$ is a cycle $(i_1,\dots,i_k)$, then it becomes 
$$ 
\begin{cases} 
\mathrm{tr}_N(A^{(q(i_1))}\cdots A^{(q(i_k))})I_N & (i_1,\dots,i_k \leq \ell -1), \\
A^{(q(i_{j+1}))}\,\cdots\,A^{(q(i_k))}A^{(q(i_1))}\,\cdots\,A^{(q(i_{j-1}))} & (i_j = \ell), \\
\qquad\qquad I_N & (k=1, i_1 = \ell),  
\end{cases}
$$
and generally it is to be 
$$
\mathrm{tr}_{\sigma_1}\big[A^{(q(1))},\dots,A^{(q(\ell-1))},*\,\big]\,\cdots\,\mathrm{tr}_{\sigma_m}\big[A^{(q(1))},\dots,A^{(q(\ell-1))},*\,\big]
$$ 
with cycle decomposition $\sigma = \sigma_1\cdots\sigma_m$ (n.b., only one cycle $\sigma_k$ contains $\ell$; hence no ambiguity occurs in the above product because its factors commute with each other). 
\end{lemma}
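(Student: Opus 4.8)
The plan is to pass to matrix coordinates, apply Wick's theorem to the resulting Gaussian moment, and then identify the combinatorics of the surviving pairings with the definition of $\mathrm{tr}_{\pi\gamma_\ell}[\cdots]$ by index-chasing. Concretely, write $\big[\mathbb{E}\big(G^{(p(1))}A^{(q(1))}\cdots A^{(q(\ell-1))}G^{(p(\ell))}\big)\big]_{ab}$ as a sum over the internal indices of the $G$'s: assign to the $k$-th factor $G^{(p(k))}$ a left index $l_k$ and a right index $r_k$, with $l_1 = a$, $r_\ell = b$, so that the entry equals $\sum \mathbb{E}\big[\prod_{k=1}^\ell [G^{(p(k))}]_{l_k r_k}\big]\,\prod_{k=1}^{\ell-1}[A^{(q(k))}]_{r_k l_{k+1}}$, the sum running over all $l_k,r_k$ except $l_1,r_\ell$. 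The entries $[G^{(i)}]_{cd}$ form a centered jointly Gaussian family, and the $G^{(i)}$ for distinct $i$ are independent; moreover the definition of a standard Gaussian self-adjoint random matrix via the orthonormal basis $C_{\alpha\beta}$ gives $\mathbb{E}\big[[G^{(i)}]_{ab}[G^{(i')}]_{cd}\big] = \delta_{i,i'}\,\tfrac1N\,\delta_{ad}\delta_{bc}$ (this follows from $\mathbb{E}[\mathrm{Tr}_N(G^{(i)}X)\mathrm{Tr}_N(G^{(i)}Y)] = \tfrac1N\mathrm{Tr}_N(XY)$ by expanding in the $C_{\alpha\beta}$). Hence Wick's theorem expands the Gaussian moment as a sum over pair partitions of $\{1,\dots,\ell\}$, and a pairing contributes only when it matches slots carrying equal $G$-index, which is exactly the index set $\mathcal{P}_2^p(\ell)$; in particular both sides vanish unless $\ell$ is even.

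For a fixed $\pi \in \mathcal{P}_2^p(\ell)$ I would then substitute the Kronecker deltas produced by the pair-covariances and carry out the remaining index sums. The $\ell/2$ contractions produce the scalar factor $N^{-\ell/2}$, so the task is to identify the matrix that is left over. The $A$-edges connect $r_k$ to $l_{k+1}$, which endows the $\ell$ slots with the cyclic structure $\gamma_\ell = (1,2,\dots,\ell)$, while a contraction $\{k,k'\}\in\pi$ identifies $l_k$ with $r_{k'}$ and $r_k$ with $l_{k'}$. Composing these identifications, the surviving index sums decouple along the orbits of $\pi\gamma_\ell$: each cycle of $\pi\gamma_\ell$ not containing the slot $\ell$ closes the corresponding $A^{(q(\cdot))}$'s into a cyclic product under $\mathrm{Tr}_N$, whereas the cycle through slot $\ell$ stays open and leaves the single matrix path from $a$ to $b$ formed by the remaining $A$'s in the order prescribed in the statement. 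Rewriting the $\sharp(\pi\gamma_\ell)-1$ closed traces in terms of the normalized $\mathrm{tr}_N$ multiplies by $N^{\sharp(\pi\gamma_\ell)-1}$, and since $\gamma_\ell\pi$ and $\pi\gamma_\ell$ are conjugate, hence $\sharp(\gamma_\ell\pi)=\sharp(\pi\gamma_\ell)$, this produces exactly $N^{\sharp(\gamma_\ell\pi)-1-\ell/2}\,\mathrm{tr}_{\pi\gamma_\ell}[A^{(q(1))},\dots,A^{(q(\ell-1))},*]$; summing over $\pi$ gives the claim.

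The step I expect to be the main obstacle is the last one: checking that the index contraction along the cycles of $\pi\gamma_\ell$ reproduces the recipe in the lemma \emph{verbatim}, including the orientation of each cyclic $A$-product inside a closed trace, the precise order $A^{(q(i_{j+1}))}\cdots A^{(q(i_k))}A^{(q(i_1))}\cdots A^{(q(i_{j-1}))}$ of the open string for the cycle $(i_1,\dots,i_k)$ with $i_j=\ell$, and the degenerate cases ($k=1$, and a fixed point being slot $\ell$ or not). I would handle this by an induction on $\ell$: fix the contraction $\{1,k\}\in\pi$ through slot $1$, perform that single contraction explicitly (it either produces a scalar $\mathrm{tr}_N(\cdots)$ and shortens the product by two $G$'s, or splices two $A$-blocks together), and compare with the effect of removing the transposition $(1,k)$ from $\pi$ and the slots from $\gamma_\ell$ — which changes the number of cycles of $\pi\gamma_\ell$ in the way dictated by multiplication by a transposition. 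As a consistency check, $\ell=2$ gives $\mathbb{E}[G^{(i)}A G^{(i)}] = \mathrm{tr}_N(A)\,I_N$, matching the formula: $\pi\gamma_2 = \mathrm{id}$ has $\sharp = 2$ cycles, the exponent is $\sharp(\gamma_2\pi)-1-\ell/2 = 0$, the fixed point $(1)$ contributes $\mathrm{tr}_N(A^{(q(1))})I_N$, and the fixed point $(2)$ (the slot $\ell$) contributes $I_N$.
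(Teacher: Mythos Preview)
Your proof is correct, but it takes a different and more laborious route than the paper. The paper avoids your ``main obstacle'' entirely by a duality trick: since a matrix $X$ is determined by the values $\mathrm{tr}_N(XA)$ for all $A \in M_N(\mathbb{C})$, it suffices to check that
\[
\mathrm{tr}_N\big(\mathbb{E}[G^{(p(1))}A^{(q(1))}\cdots A^{(q(\ell-1))}G^{(p(\ell))}]\,A\big)
= \sum_{\pi \in \mathcal{P}_2^p(\ell)} N^{\sharp(\gamma_\ell\pi)-1-\ell/2}\,\mathrm{tr}_N\big(\mathrm{tr}_{\pi\gamma_\ell}[A^{(q(1))},\dots,A^{(q(\ell-1))},*\,]\,A\big)
\]
for every $A$. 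By construction the right-hand side equals $\sum_\pi N^{\sharp(\gamma_\ell\pi)-1-\ell/2}\,\mathrm{tr}_{\pi\gamma_\ell}[A^{(q(1))},\dots,A^{(q(\ell-1))},A]$ (this is how the $*$-notation is designed), and the left-hand side is $\mathbb{E}[\mathrm{tr}_N(G^{(p(1))}A^{(q(1))}\cdots G^{(p(\ell))}A)]$. Now both sides are \emph{scalars}, and the identity is exactly the standard genus-expansion formula for mixed moments of GUE matrices with deterministic interlacings (Proposition 22.32 in Nica--Speicher). What you gain by doing the index-chasing directly is a self-contained argument; what the paper gains is that all the orientation and degenerate-case bookkeeping has already been done once and for all in the scalar setting, so the matrix-valued statement follows in two lines.
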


Note that $\mathrm{tr}_N(\mathrm{tr}_\sigma[A_1,\dots,A_{\ell-1},*\,]\,A_\ell) = \mathrm{tr}_\sigma[A_1,\dots,A_{\ell-1},A_\ell]$ with the notation of \cite[Proposition 22.32]{NicaSpeicher:Book} on the right-hand side. This is the key of the proof below.

\begin{proof} Remark that $\mathrm{tr}_N(XA) = \mathrm{tr}_N\big(\mathbb{E}\big[G^{(p(1))}A^{(q(1))}\cdots A^{(q(\ell-1))}G^{(p(\ell))}\big]A\big)$ for all $A$ forces $X = \mathbb{E}\big[G^{(p(1))}A^{(q(1))}\cdots A^{(q(\ell-1))}G^{(p(\ell))}\big]$. This together with \cite[Proposition 22.32]{NicaSpeicher:Book} (see the above remark) implies the desired result. 
\end{proof} 

This lemma immediately implies \eqref{Eq10}, because $\Vert \xi^\mathrm{lib}_{ij}(N)(t)\Vert_{M_N(\mathbb{C})} \leq R$ and $\sharp(\gamma_{\ell_k}\pi)-1-\ell_k/2 \leq 0$, see e.g., \cite[Exercise 22.15]{NicaSpeicher:Book}. 

\medskip
Similarly as above we derive the next lemma from  \cite[Proposition 22.33]{NicaSpeicher:Book} and its discussion there. 

\begin{lemma}\label{L4.4} Let $(\mathcal{M},\tau)$ be a tracial $W^*$-probability space. Let $g_i$ be an freely independent sequence of standard semicircular elements in $(\mathcal{M},\tau)$, and $a_i$ be an sequence of elements in $\mathcal{M}$ which are freely independent of the $g_i$. Let $E$ be a unique $\tau$-preserving conditional expectation onto the von Neumann subalgebra generated by the $a_i$. Then we have 
\begin{align*}
E\big(g_{p(1)}a_{q(1)}\cdots a_{q(\ell-1)}g_{p(\ell)}\big) 
= 
\sum_{\pi \in NC_2^p(\ell)} \tau_{\pi\gamma_\ell}\big[a_{q(1)},\dots,a_{q(\ell-1)},*\,\big],  
\end{align*}
where $NC_2^p(\ell)$ is the subset of all $\pi \in \mathcal{P}_2^p(\ell)$ that are non-crossing as partitions. The other undefined symbol $\tau_{\pi\gamma_\ell}[a_{q(1)},\dots,a_{q(\ell-1)},*\,]$ is similarly defined as in the previous lemma.
\end{lemma}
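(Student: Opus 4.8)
The plan is to derive Lemma \ref{L4.4} as the free-probabilistic analogue of Lemma \ref{L4.3}, by the exact same characterization argument but now using the operator-valued / conditional-expectation version of the semicircular moment formula. Concretely, one knows (this is the content of \cite[Proposition 22.33]{NicaSpeicher:Book}) that for a semicircular family $(g_i)$ freely independent from a subalgebra $\mathcal{B}$ generated by the $(a_i)$, the $\mathcal{B}$-valued expectation $E$ satisfies
\begin{align*}
E\big(g_{p(1)}b_1 g_{p(2)} b_2 \cdots b_{\ell-1} g_{p(\ell)}\big) = \sum_{\pi\in NC_2^p(\ell)} \kappa_\pi[g_{p(1)},\dots,g_{p(\ell)}]\cdot(\text{nested products of }b_j),
\end{align*}
where only non-crossing pair partitions $\pi$ respecting the coloring $p$ survive (because $g_i, g_j$ with $i\neq j$ are free, so their mixed cumulants vanish) and each surviving term, after normalizing the covariance of a standard semicircular to $1$, contributes weight $1$ times the alternating product of the $b_j$'s dictated by the nesting of $\pi$. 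Interpreting the $b_j$ as the elements $a_{q(j)}$ and bookkeeping the nesting via the permutation $\pi\gamma_\ell$ exactly as in Lemma \ref{L4.3}, this is precisely the claimed formula $E(g_{p(1)}a_{q(1)}\cdots a_{q(\ell-1)}g_{p(\ell)}) = \sum_{\pi\in NC_2^p(\ell)} \tau_{\pi\gamma_\ell}[a_{q(1)},\dots,a_{q(\ell-1)},*\,]$.

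The cleanest way to package this, mirroring the proof of Lemma \ref{L4.3}, is again through a uniqueness characterization: $E(X)$ is the unique element $Y$ of $\mathcal{B}=W^*(a_i)$ with $\tau(Ya) = \tau(Xa)$ for all $a\in\mathcal{B}$ (equivalently, for all $a$ a word in the $a_i$). So I would first compute $\tau\big(g_{p(1)}a_{q(1)}\cdots a_{q(\ell-1)}g_{p(\ell)}\, a\big)$ for an arbitrary word $a$ in the $a_i$, apply the free moment-cumulant expansion for a semicircular family mixed with a free subalgebra (again \cite[Proposition 22.33]{NicaSpeicher:Book}), identify which non-crossing pairings of the $\ell$ semicircular legs survive the color constraint $p\circ\pi=p$, and observe that the resulting scalar equals $\tau\big(\tau_{\pi\gamma_\ell}[a_{q(1)},\dots,a_{q(\ell-1)},*\,]\cdot a\big)$ term by term — using, just as in the note after Lemma \ref{L4.3}, that $\tau\big(\tau_\sigma[a_1,\dots,a_{\ell-1},*\,]\,a_\ell\big) = \tau_\sigma[a_1,\dots,a_{\ell-1},a_\ell]$ in the notation of \cite[Proposition 22.32]{NicaSpeicher:Book}. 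Since each $\tau_{\pi\gamma_\ell}[a_{q(1)},\dots,a_{q(\ell-1)},*\,]$ manifestly lies in $\mathcal{B}$, the sum over $NC_2^p(\ell)$ is the unique element representing $E$, which is the assertion.

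The only genuine content beyond Lemma \ref{L4.3} is the replacement of all pair partitions $\mathcal{P}_2^p(\ell)$ (the Wick/Isserlis formula for Gaussian matrices) by non-crossing pair partitions $NC_2^p(\ell)$ (the semicircular family), together with the disappearance of the power-of-$N$ weights: in the matrix case the genus expansion gives a factor $N^{\sharp(\gamma_\ell\pi)-1-\ell/2}$, and passing to the limit kills every crossing pairing (strictly negative exponent) and leaves weight exactly $1$ on the non-crossing ones (exponent $0$), which is exactly the structural statement that standard Gaussian self-adjoint random matrices converge to a semicircular family. So Lemma \ref{L4.4} is morally the ``$N\to\infty$ limit'' of Lemma \ref{L4.3}, but I would prefer to prove it directly from the free cumulant machinery rather than by a limiting argument, since the conditional expectation $E$ needs to be handled at the operator level, not just in trace. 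I do not expect a serious obstacle: the main care is purely combinatorial bookkeeping — matching the nested product structure encoded by $\pi\gamma_\ell$ with the iterated conditional-expectation formula of \cite[Proposition 22.33]{NicaSpeicher:Book}, and checking that the color constraint $p\circ\pi = p$ is exactly what survives because mixed free cumulants of distinct semicirculars vanish.
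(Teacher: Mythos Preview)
Your proposal is correct and follows essentially the same approach as the paper: the paper's proof is the one-line remark that the lemma follows from \cite[Proposition 22.33]{NicaSpeicher:Book} ``similarly as above,'' i.e., by the same uniqueness characterization used for Lemma \ref{L4.3} (testing $\tau(Ya)=\tau(Xa)$ against all $a$ in the subalgebra) combined with the identity $\tau\big(\tau_\sigma[a_1,\dots,a_{\ell-1},*\,]\,a_\ell\big) = \tau_\sigma[a_1,\dots,a_{\ell-1},a_\ell]$. Your write-up simply makes explicit the combinatorial bookkeeping (color constraint $p\circ\pi=p$ from vanishing mixed cumulants, weight $1$ from the standard semicircular normalization) that the paper leaves implicit in its citation.
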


It is not so hard to derive \eqref{Eq11} from Lemmas \ref{L4.3}, \ref{L4.4} in the following way: For simplicity we write
\begin{align*}
Q_{k,N}^{(p_*(G_*))} 
&= A_k^{(q(0))}G_k^{(p(1))}A_k^{(q(1))}\cdots A_k^{(q(\ell_k-1))}G_k^{(p(\ell_k))}A_k^{(q(\ell_k))}, \\
Q_k^{(\tau,p_*(g_*))}
&= a_{q(0)}^{(k)}g_{p(1)}^{(k)}a_{q(1)}^{(k)}\cdots a_{q(\ell_k-1)}^{(k)}g_{p(\ell_k)}^{(k)}a_{q(\ell_k)}^{(k)},   
\end{align*}
where each $G_k^{(p(\cdot))}$ (or $A_k^{(q(\cdot))}$) is some of the $G_N^{(i,l)}$ (resp.~a product  of some $\xi_{ij}^\mathrm{lib}(N)(t)$ ($t \leq s$ as long as $i\neq n+1$) or $I_N$) and accordingly, each $g_{p(\cdot)}^{(k)}$ (or $a_{q(\cdot)}^{(k)}$) is some of the $g_{il}$ (resp.~a product of some $x_{ij}^\tau(t)$ ($t \leq s$ as long as $i\neq n+1$) or $1$). Remark that $\sharp(\gamma_{\ell_k}\pi)-1-\ell_k/2$ is always non-positive and equals $0$ if and only if $\pi$ is non-crossing, see e.g., \cite[Exercise 22.15]{NicaSpeicher:Book}. Hence, by Lemmas \ref{L4.3} and \ref{L4.4} we have
\begin{align*}
\mathbb{E}_G\big[Q_{k,N}^{(p_*(G_*))}\big] 
&= 
\sum_{\pi \in NC_2^p(\ell_k)} A_k^{(q(0))}\mathrm{tr}_{\pi\gamma_{\ell_k}}\big[A_k^{(q(1))},\dots,A_k^{(q(\ell_k-1))},*\,\big]\,A_k^{(q(\ell_k))} \\
&\quad+ \sum_{\pi \in \mathcal{P}_2^p(\ell_k)\setminus NC_2^p(\ell_k)} N^{\sharp(\gamma_{\ell_k}\pi)-1-\ell_k/2} \\
&\qquad\qquad\qquad \times A_k^{(q(0))} \mathrm{tr}_{\pi\gamma_{\ell_k}}\big[A_k^{(q(1))},\dots,A_k^{(q(\ell_k-1))},*\,\big]\,A_k^{(q(\ell_k))}, \\
E_s^\tau\big(Q_k^{(\tau,p_*(g_*))}\big) 
&= 
\sum_{\pi \in NC_2^p(\ell_k)} a_{q(0)}^{(k)}\mathrm{tr}_{\pi\gamma_{\ell_k}}\big[a_{q(1)}^{(k)},\dots,a_{q(\ell_k-1)}^{(k)},*\,\big]\,a_{q(\ell_k)}^{(k)}. 
\end{align*}
Therefore, by $\Vert \xi_{ij}^\mathrm{lib}(N)(t) \Vert_{M_N(\mathbb{C})} \leq R$, we obtain that 
\begin{align*}
\Big|\mathrm{tr}_N\big(\mathbb{E}_G\big[Q_{1,N}^{(p_*(G_*))}\big]\cdots \mathbb{E}_G\big[Q_{m,N}^{(p_*(G_*))}\big]\big) - \tau\big(E_s^\tau\big(Q_1^{(\tau,p_*(g_*))}\big)\cdots E_s^\tau\big(Q_m^{(\tau,p_*(g_*))}\big)\big)\Big|& \\
\leq   
\big|S(\tau_{\Xi^\mathrm{lib}(N)}(W_1),\dots,\tau_{\Xi^\mathrm{lib}(N)}(W_L)) - S(\tau(W_1),\dots,\tau(W_L))\big|& + \frac{C'''}{N}
\end{align*}  
with some monomials $W_1,\dots,W_L$ in the $x_{ij}(t)$ and a positive constant $C''' > 0$ (which is independent of $N$), where $S$ is a certain polynomial of commutative indeterminates. 
It follows that 
\begin{align*}
\varlimsup_{N\to\infty}\Big\Vert\mathrm{tr}_N\big(\mathbb{E}_G\big[Q_{1,N}^{(p_*(G_*))}\big]\cdots \mathbb{E}_G\big[Q_{m,N}^{(p_*(G_*))}\big]\big) - \tau\big(E_s^\tau\big(Q_1^{(\tau,p_*(g_*))}\big)\cdots E_s^\tau\big(Q_m^{(\tau,p_*(g_*))}\big)\big)\Big\Vert_{\mathcal{O}_\varepsilon(\tau)}& \\
\leq  
\Big\Vert S(\tau_{\Xi^\mathrm{lib}(N)}(W_1),\dots,\tau_{\Xi^\mathrm{lib}(N)}(W_L)) - S(\tau(W_1),\dots,\tau(W_L))\Big\Vert_{\mathcal{O}_\varepsilon(\tau)}&.
\end{align*} 
By definition, for a given $\delta > 0$, there exists $\varepsilon > 0$ so that for every $0  < \varepsilon' \leq \varepsilon$ one has
$$
\Big\Vert S(\tau_{\Xi^\mathrm{lib}(N)}(W_1),\dots,\tau_{\Xi^\mathrm{lib}(N)}(W_L)) - S(\tau(W_1),\dots,\tau(W_L))\Big\Vert_{\mathcal{O}_{\varepsilon'}(\tau)} \leq \delta. 
$$
Hence we are done. \qed

\begin{remark}\label{R4.5} {\rm (See e.g., \cite[Exercise 3.4 in p.40]{Paulsen:Book}) Let $X = [X_{ij}]$ be a matrix whose entries are integrable. If $X$ is positive-definite almost surely, then so is $\mathbb{E}[X] = [\mathbb{E}[X_{ij}]]$ since $\sum_{i,j} \bar{\zeta}_i\mathbb{E}(X_{ij})\zeta_j = \mathbb{E}\Big[\sum_{i,j} \bar{\zeta}_i X_{ij} \zeta_j\Big] \geq 0$ for any scalars $\zeta_i$. 

Let $A = [A_{ij}]$ and $B = [B_{ij}]$ be $N\times N$ matrices whose entries have all moments. Since 
$$
\begin{bmatrix} \mathbb{E}[A^* A] & \mathbb{E}[A^* B] \\ \mathbb{E}[B^* A] & \mathbb{E}[B^* B] \end{bmatrix} = \mathbb{E}\left[ \begin{bmatrix} A & B \\ O & O \end{bmatrix}^* \begin{bmatrix} A & B \\ O & O \end{bmatrix}\right] \geq O, 
$$
one has, for all $t, \theta \in \mathbb{R}$ and $\mathbf{x}, \mathbf{y} \in \mathbb{C}^N$, 
\begin{align*}
t^2(\mathbb{E}[A^* A]\mathbf{x}|\mathbf{x})_{\mathbb{C}^N} + &2t\,\mathrm{Re}\big(\mathrm{e}^{-\mathrm{i}\theta}(\mathbb{E}[B^* A]\mathbf{x}|\mathbf{y})_{\mathbb{C}^N}\big) + 
(\mathbb{E}[B^* B]\mathbf{y}|\mathbf{y})_{\mathbb{C}^N} \\
&= 
\left(
\begin{bmatrix} \mathbb{E}[A^* A] & \mathbb{E}[A^* B] \\ \mathbb{E}[B^* A] & \mathbb{E}[B^* B] \end{bmatrix}
\begin{bmatrix} t\mathbf{x} \\ \mathrm{e}^{\mathrm{i}\theta}\mathbf{y} \end{bmatrix} 
\Bigg|
\begin{bmatrix} t\mathbf{x} \\ \mathrm{e}^{\mathrm{i}\theta}\mathbf{y} \end{bmatrix}
\right)_{\mathbb{C}^{2N}} \geq 0,
\end{align*}
and hence $\big|(\mathbb{E}[B^* A]\mathbf{x}|\mathbf{y})_{\mathbb{C}^N}\big|^2 \leq 
(\mathbb{E}[A^* A]\mathbf{x}|\mathbf{x})_{\mathbb{C}^N} 
(\mathbb{E}[B^* B]\mathbf{y}|\mathbf{y})_{\mathbb{C}^N}$. It follows that 
$$
\Vert \mathbb{E}[B^* A]\Vert_{M_N(\mathbb{C})}^2 \leq \Vert \mathbb{E}[A^* A]\Vert_{M_N(\mathbb{C})}\,\Vert \mathbb{E}[B^* B]\Vert_{M_N(\mathbb{C})}.
$$}
\end{remark} 

\subsubsection{The convergence is uniform in time $s$} 
Let us introduce the map $\Pi_s : \mathbb{C}\big\langle x_{\bullet\diamond}(\cdot),v_\bullet(\cdot)\big\rangle \to \mathbb{C}\big\langle x_{\bullet\diamond}(\cdot), v_\bullet(\cdot)\big\rangle$ defined by replacing $x_{ij}(t)$ with $v_i((t-s)\vee0)x_{ij}(t\wedge s)v_i((t-s)\vee0)^*$ as long as $i \neq n+1$ and also replacing $v_i(t)$ with $v_i((t-s)\vee0)$, with keeping the other letters. Remark that the resulting $\Pi_s P$ is a (non-commutative) polynomial in the $x_{ij}(t)$ (with $t \leq s$ if $i \neq n+1$) and the $v_i(t)$. 

\medskip
For a while, \emph{we are dealing with an arbitrarily fixed monomial $P$ whose letters are supported in $[0,T]$, that is, the letters are from the $x_{ij}(t)$ and the $v_i(t)$ with $t \leq T$, and so is $\Pi_s P$}. As before we have
$$
\mathbb{E}\big[P(\xi_{\bullet\diamond}^\mathrm{lib}(N)(\cdot),U_N^{(\bullet)}((\cdot)\vee s)U_N^{(\bullet)}(s)^*)\mid\mathcal{F}_s\big] 
= 
\mathbb{E}_V\big[(\Pi_s P)(\xi_{\bullet\diamond}^\mathrm{lib}(N)(\cdot),V_N^{(\bullet)}(\cdot))\big],  
$$
where $V_N^{(i)}$, $1 \leq i \leq n$, are $n$ independent left unitary Brownian motions that are independent of the $U_N^{(i)}(t)$ with $t \leq s$. Denote by $L(P)$ the number of letters in the given monomial $P$ (we call it the length of $P$). Observe that $L(\Pi_s P) \leq 3L(P)$. 

\medskip
In what follows, we fix $s$, but will give our desired estimate in such a way that it is independent of the choice of $s$.

\medskip
Let us introduce the following algorithm: If $v_i(t_1), v_i(t_2),\dots, v_i(t_{\ell_i})$ with $t_1 < t_2 < \cdots < t_{\ell_i}$ ({\it n.b.}, $\ell_i \leq \sum_{i=1}^n \ell_i \leq L(\Pi_s P) \leq 3L(P)$) are all the $v_i(\cdot)$ letters appearing in $\Pi_s P$, we replace these with 
$$
w_{i1},\, w_{i2}w_{i1},\, w_{i3}w_{i2}w_{i1}, \dots,\, w_{i\ell_i}\cdots w_{i2}w_{i1}
$$
with new indeterminates $w_{ij}$ ($1 \leq i \leq n$, $1 \leq j \leq \ell_i$ ($\leq 3L(P)$)), and set $t_{ij} := t_j - t_{j-1}$ with $t_0 := 0$. Applying this algorithm to the monomial $\Pi_s P$, we get a new monomial $\widehat{\Pi}_s P$ whose letters are in the $x_{ij}(t)$ ($0 \leq t \leq s$) and the $w_{ij}$. Observe the following rather rough estimates 
\begin{equation}\label{Eq14}
L(\widehat{\Pi}_s P) \leq L(\Pi_s P)^2 \leq 9L(P)^2, \qquad 
t_{ij} \leq T. 
\end{equation}
Let $W_N^{(i,j)}$ be independent left unitary Brownian motions that are independent of the $U_N^{(i)}(t)$ with $t \leq s$ and denote by $\mathbb{E}_W$ the expectation only in the stochastic processes $W_N^{(i,j)}$. By the left increment property of left unitary Brownian motions we have
\begin{align*}
\mathbb{E}\big[P(\xi_{\bullet\diamond}^\mathrm{lib}(N)(\cdot),U_N^{(\bullet)}((\cdot)\vee s)U_N^{(\bullet)}(s)^*)\mid\mathcal{F}_s\big] 
&=
\mathbb{E}_V\big[(\Pi_s P)(\xi_{\bullet\diamond}^\mathrm{lib}(N)(\cdot),V_N^{(\bullet)}(\cdot))\big] \\
&=
\mathbb{E}_W\big[(\widehat{\Pi}_s P)(\xi_{\bullet\diamond}^\mathrm{lib}(N)(\cdot),W_N^{(\bullet,\circ)}(t_{\bullet\circ}))\big],
\end{align*}
where $(\widehat{\Pi}_s P)(\xi_{\bullet\diamond}^\mathrm{lib}(N)(\cdot),W_N^{(\bullet,\circ)}(t_{\bullet\circ}))$ denotes the substitution of $\xi_{ij}^\mathrm{lib}(t)$ and $W_N^{(i,j)}(t_{ij})$ for $x_{ij}(t)$ and $w_{ij}$, respectively, into $\widehat{\Pi}_s P$.  

\medskip
For simplicity, let us denote $X := \widehat{\Pi}_s P$, and write $X = X(1)\cdots X(\ell)$ with $\ell := L(X)$ whose letters $X(k)$ are from $\{x_{ij}(t)\mid 1 \leq i \leq n, 1 \leq j \leq r(i), 0 \leq t\,(\text{$\leq s$ as long as $i\neq n+1$})\} \cup \{w_{ij},w_{ij}^* \mid 1 \leq i \leq n, 1 \leq j \leq 3L(P)\}$. The substitution of $\xi_{ij}^\mathrm{lib}(N)(t)$ and $W_N^{(i,j)}(t_{ij})$ for $x_{ij}(t)$ and $w_{ij}$, respectively, into the monomial $X$ is denoted by $X_N = X_N(1)\cdots X_N(\ell)$. 

\medskip
Let $X_N(\ell+1) := A \in M_N(\mathbb{C})$ be arbitrarily chosen. Let $\rho : \mathbb{C}[\mathfrak{S}_{l+1}] \curvearrowright (\mathbb{C}^N)^{\otimes(\ell+1)}$,  on which $M_N(\mathbb{C})^{\otimes(\ell+1)}$ acts naturally, be the permutation representation of $\mathfrak{S}_{\ell+1}$ over the tensor product components; in fact, $\rho(\sigma)(e_1\otimes\cdots\otimes e_{\ell+1}) = e_{\sigma^{-1}(1)}\otimes\cdots\otimes e_{\sigma^{-1}(\ell+1)}$ for $\sigma \in \mathfrak{S}_{\ell+1}$. For each $\sigma \in \mathfrak{S}_{\ell+1}$ we define, following \cite[section 3]{Levy:arXiv:1112.2452v2} (rather than Lemma \ref{L4.3}), 
\begin{align*}
\mathrm{tr}_\sigma&(X_N(1),\dots,X_N(\ell),X_N(\ell+1)) \\
:&= 
\frac{1}{N^{\sharp(\sigma)}}\mathrm{Tr}_N^{\otimes(\ell+1)}(\rho(\sigma)\,(X_N(1)\otimes\cdots X_N(\ell) \otimes X_N(\ell+1))) \\
&= 
\prod_{(k_1,\dots,k_*) \preccurlyeq \sigma} \mathrm{tr}_N(X_N(k_*) \cdots X_N(k_1)), 
\end{align*}
where $(k_1,\dots,k_*) \preccurlyeq \sigma$ means that $(k_1,\dots,k_*)$ is a cycle component of the cycle decomposition of $\sigma \in \mathfrak{S}_{\ell+1}$, and $\sharp(\sigma)$ denotes the number of cycles in $\sigma$ as in the previous subsection. \emph{Note that $\mathrm{tr}_\sigma(\cdots)$ here is not consistent with $\mathrm{tr}_\sigma[\cdots]$ in Lemma \ref{L4.3}, but $\mathrm{tr}_{\sigma^{-1}}(\cdots) = \mathrm{tr}_\sigma[\cdots]$ holds.} In particular, for the cycle $\gamma_{\ell+1} = (1,\dots,\ell,\ell+1)$ we have
\begin{align*}
\mathbb{E}_W\big[\mathrm{tr}_{\gamma_{\ell+1}^{-1}}(X_N(1),\dots,X_N(\ell),A)\big] 
= 
\mathrm{tr}_N\big(\mathbb{E}_W\big[X_N\big]A\big). 
\end{align*}
Then by a slight generalization of \cite[Proposition 3.5]{Levy:arXiv:1112.2452v2} (see the next subsection for its precise statement with a detailed proof) there exist universal coefficients $c_\sigma$, $\sigma \in \mathfrak{S}_{\ell+1}$, depending on the $t_{ij}$ and $X$, and a  universal constant $C > 1$, depending only on $T$ and $L(P)$ due to \eqref{Eq14} (and hence only on $P$), such that 
$$
\left|\mathrm{tr}_N\big(\mathbb{E}_W\big[X_N\big]A\big)-\sum_{\sigma \in \mathfrak{S}_{\ell+1}} c_\sigma\,\mathrm{tr}_\sigma\big(X_N^{(0)}(1),\dots,X_N^{(0)}(\ell),A\big)\right| \leq 
\frac{C}{N^2}\,\Vert A \Vert_{\mathrm{tr}_N,1}
$$
and
\begin{equation}\label{Eq15} 
\big|c_\sigma\big| \leq C, \quad \sigma \in \mathfrak{S}_{\ell+1}
\end{equation}
with 
$$
X_N^{(0)}(k) := 
\begin{cases}
I_N & (X(k) = \text{$w_{ij}$ or $w_{ij}^*$}), \\
X_N(k) & (\text{otherwise}),
\end{cases}
$$
since
$\big|\mathrm{tr}_\sigma\big(X_N^{(0)}(1),\dots,X_N^{(0)}(\ell),A\big)\big| \leq (R\vee1)^{L(P)}\,\Vert A \Vert_{1,\mathrm{tr}_N}$ (n.b., the procedure from $P$ to $X[P] = \widehat{\Pi}_s P$ does not make the number of $x_{ij}(t)$ increase), where $\Vert\,-\,\Vert_{1,\mathrm{tr}_N}$ denotes the trace norm with respect to the normalized trace $\mathrm{tr}_N$.  
Since 
$$
\sum_{\sigma \in \mathfrak{S}_{\ell+1}} c_\sigma\,\mathrm{tr}_\sigma\big(X_N^{(0)}(1),\dots,X_N^{(0)}(\ell),A\big) = 
\mathrm{tr}_N\Bigg(\sum_{\sigma \in \mathfrak{S}_{\ell+1}} c_\sigma\,\mathrm{tr}_{\sigma^{-1}}\big[X_N^{(0)}(1),\dots,X_N^{(0)}(\ell),*\,\big]\,A\Bigg) 
$$
with the notation in Lemma \ref{L4.3} and since $A \in M_N(\mathbb{C})$ is arbitrary, we conclude that
\begin{equation}\label{Eq16}
\Big\Vert \mathbb{E}_W\big[X_N\big] - \sum_{\sigma \in \mathfrak{S}_{\ell+1}} c_\sigma\,\mathrm{tr}_{\sigma^{-1}}\big[X_N^{(0)}(1),\dots,X_N^{(0)}(\ell),*\,\big] \Big\Vert_{M_N(\mathbb{C})} \leq \frac{C}{N^2}. 
\end{equation}
Notice that $\mathrm{tr}_{\sigma^{-1}}\big[X_N^{(0)}(1),\dots,X_N^{(0)}(\ell),*\,\big]$ depends only on the traces $\mathrm{tr}_N$ of monomials in the $\xi_{ij}^\mathrm{lib}(N)(t)$ (with $0 \leq t \leq s$ as long as $i \neq n+1$), or other words, the $\tau_{\Xi^\mathrm{lib}(N)}$ of monomials in the $x_{ij}(t)$ (with $0 \leq t \leq s$ as long as $i \neq n+1$). 

\medskip
Observe that \eqref{Eq16} holds for any monomial $P$ and $s \geq 0$, and we should write $X = X[P] := \widehat{\Pi}_s P$, $\ell = \ell_P$ ($= L(X[P])$), $c_\sigma = c_\sigma(P)$ and $C = C_P$ for clarifying the dependency in what follows.    
Set
$$
X[P]^{(0)}(k) := 
\begin{cases}
1 & (X[P](k) = \text{$w_{ij}$ or $w_{ij}^*$}), \\
X[P](k) & (\text{otherwise})
\end{cases}
$$
and for simplicity we write 
\begin{align*} 
\tau_{\Xi^\mathrm{lib}(N)}(\sigma;P) &:= \mathrm{tr}_{\sigma^{-1}}\big[X[P]_N^{(0)}(1),\dots,X[P]_N^{(0)}(\ell_P),*\,\big], \\
\tau(\sigma;P) &:= \tau_{\sigma^{-1}}\big[X[P]^{(0)}(1),\dots,X[P]^{(0)}(\ell_P),*\,\big], \\
\mathcal{E}(P;\tau_{\Xi^\mathrm{lib}(N)}) &:= \sum_{\sigma \in \mathfrak{S}_{\ell_P+1}} c_\sigma(P)\,\tau_{\Xi^\mathrm{lib}(N)}(\sigma;P), \\
\mathcal{E}(P;\tau) &:= \sum_{\sigma \in \mathfrak{S}_{\ell_P+1}} c_\sigma(P)\,\tau(\sigma;P).   
\end{align*} 

We are now finalizing the proof by using what we have prepared so far. Let $P_1,\dots,P_m$ be any monomials such as the above $P$, that is, all the letters appearing in those are supported in a finite time interval $[0,T]$, and rewrite $\ell_k := \ell_{P_k} = L(X[P_k])$ and set $L := \max\{ L(P_k) \mid 1 \leq k \leq m \}$ and $C_0 := \max\{ C_{P_k} \mid 1 \leq k \leq m \}$ for simplicity. We have
\begin{equation}\label{Eq17}
\begin{aligned}
&\Bigg|\mathrm{tr}_N\Big(\mathcal{E}(P_1;\tau_{\Xi^\mathrm{lib}(N)})\cdots \mathcal{E}(P_m;\tau_{\Xi^\mathrm{lib}(N)})\Big) -\tau\Big(\mathcal{E}(P_1;\tau)\cdots\mathcal{E}(P_m;\tau)\Big)\Bigg| \\
&\leq 
\sum_{\substack{\sigma_k \in \mathfrak{S}_{\ell_k +1} \\ (1 \leq k \leq m)}} \big|c_{\sigma_1}(P_1)\cdots c_{\sigma_m}(P_m)\big| \times \\
&\qquad\qquad\quad
\Bigg|\mathrm{tr}_N\big(\tau_{\Xi^\mathrm{lib}(N)}(\sigma_1;P_1)\cdots\tau_{\Xi^\mathrm{lib}(N)}(\sigma_m;P_m)\big) - \tau\big(\tau(\sigma_1;P_1)\cdots\tau(\sigma_m;P_m)\big)\Bigg| \\
&\leq 
C_0^m \sum_{\substack{\sigma_k \in \mathfrak{S}_{\ell_k +1} \\ (1 \leq k \leq m)}}  
\Bigg|\mathrm{tr}_N\big(\tau_{\Xi^\mathrm{lib}(N)}(\sigma_1;P_1)\cdots\tau_{\Xi^\mathrm{lib}(N)}(\sigma_m;P_m)\big) - \tau\big(\tau(\sigma_1;P_1)\cdots\tau(\sigma_m;P_m)\big)\Bigg|
\end{aligned}
\end{equation}
by \eqref{Eq15}. Let $\sigma_k = \sigma_k^{(1)}\cdots\sigma_k^{(\sharp(\sigma_k))}$ be the cycle decomposition such that the rightmost cycle $\sigma_k^{(\sharp(\sigma_k))}$ contains $\ell_k+1$. Then we may and do write
\begin{align*}
\tau_{\Xi^\mathrm{lib}(N)}(\sigma_k;P_k)
&= 
\tau_{\Xi^\mathrm{lib}(N)}(Q_k^{(1)})\,\cdots\,\tau_{\Xi^\mathrm{lib}(N)}(Q_k^{(\sharp(\sigma_k)-1)} )\,Q_{k,N}^{(\sharp(\sigma_k))}, \\
\tau(\sigma_k;P_k) 
&= 
\tau(Q_k^{(1)})\,\cdots\,\tau(Q_k^{(\sharp(\sigma_k)-1)})\,Q_k^{(\sharp(\sigma_k))}
\end{align*}
with some monomials $Q_k^{(*)}$ in the $x_{ij}(t)$, $i \neq n+1$, $0 \leq t \leq T$, and the $x_{n+1\,j}(t)$, whose total length is at most $L(P_k) \leq L$ by construction, possibly with $Q_k^{(\sharp(\sigma_k))} = 1$, where $Q_{k,N}^{(\sharp(\sigma_k))}$ denotes the substitution of $\xi_{ij}^\mathrm{lib}(N)(t)$ for $x_{ij}(t)$ into $Q_k^{(\sharp(\sigma_k))}$. It follows that 
\begin{equation}\label{Eq18}
\begin{aligned}
\Bigg|\mathrm{tr}_N&\big(\tau_{\Xi^\mathrm{lib}(N)}(\sigma_1;P_1)\cdots\tau_{\Xi^\mathrm{lib}(N)}(\sigma_m;P_m)\big) - \tau\big(\tau(\sigma_1;P_1)\cdots\tau(\sigma_m;P_m)\big)\Bigg| \\
&\leq 
\Bigg|\Big(\prod_{k=1}^m \tau_{\Xi^\mathrm{lib}(N)}(Q_k^{(1)})\cdots\tau_{\Xi^\mathrm{lib}(N)}(Q_k^{(\sharp(\sigma_k)-1)}) \Big)\tau_{\Xi^\mathrm{lib}(N)}(Q_1^{(\sharp(\sigma_1))}\cdots Q_m^{(\sharp(\sigma_m))}) \\
&\qquad\qquad\qquad\qquad\qquad- 
\Big(\prod_{k=1}^m \tau(Q_k^{(1)})\cdots\tau(Q_k^{(\sharp(\sigma_k)-1)}) \Big) \tau(Q_1^{(\sharp(\sigma_1))}\cdots Q_m^{(\sharp(\sigma_m))}) \Bigg| \\
&\leq \big(1+\sum_{k=1}^m (\sharp(\sigma_k)-1)\big)  \times (R\vee1)^{Lm} \times 2^{T+1}(2(R\vee1))^{Lm} d\big(\tau_{\Xi^\mathrm{lib}(N)},\tau\big) \\
&\leq (9L^2 m+1) \cdot (R\vee1)^{Lm}\cdot 2^{T+1}(2(R\vee1))^{Lm} d\big(\tau_{\Xi^\mathrm{lib}(N)},\tau\big).
\end{aligned}
\end{equation}
Remark that $\Vert \mathbb{E}_W[X[P_k]_N] \Vert_{M_N(\mathbb{C})} \leq \Vert X[P_k]_N \Vert_{M_N(\mathbb{C})} \leq (R\vee1)^L$ by construction, since the matricial expectation $\mathbb{E}_W[\,-\,]$  is a unital positive map, see Remark \ref{R4.5}. 
Therefore, \eqref{Eq16}--\eqref{Eq18} altogether imply that 
$$
\Bigg| \mathrm{tr}_N\big(\mathbb{E}_W\big[X[P_1]_N\big]\cdots \mathbb{E}_W\big[X[P_m]_N\big]\big) -
\tau\Big(\mathcal{E}(P_1;\tau)\cdots\mathcal{E}(P_m;\tau)\Big) \Bigg| 
\leq 
\frac{C_1}{N^2} + C_2\,d\big(\tau_{\Xi^\mathrm{lib}(N)},\tau\big)  
$$
with constants $C_1, C_2 > 0$ that are independent of the choice of $s$. Then, what we established in the previous subsection, the estimate obtained just above and
$$
\mathbb{E}_W\big[X[P_k]_N\big] = 
\mathbb{E}\big[P_k(\xi_{\bullet\diamond}^\mathrm{lib}(N)(\cdot),U_N^{(\bullet)}((\cdot)\vee s)U_N^{(\bullet)}(s)^*)\mid\mathcal{F}_s\big] = \mathbb{E}\big[P_{k,N}^{(s)}\mid\mathcal{F}_s\big], \quad 1 \leq k \leq m
$$
altogether force $\tau\Big(\mathcal{E}(P_1;\tau)\cdots\mathcal{E}(P_m;\tau)\Big)$ to be $\tau\big(E_s^\tau\big(P^{\tau^s}_1)\cdots E_s^\tau\big(P^{\tau^s}_m\big)\big)$, and we finally obtain that
$$
\Bigg| \mathrm{tr}_N\big(\mathbb{E}\big[P_{1,N}^{(s)}\mid\mathcal{F}_s\big]\cdots\mathbb{E}\big[P_{m,N}^{(s)}\mid\mathcal{F}_s\big]\big) 
-
\tau\big(E_s^\tau\big(P^{\tau^s}_1)\cdots E_s^\tau\big(P^{\tau^s}_m\big)\big) \Bigg| 
\leq 
\frac{C_1}{N^2} + C_2\,d\big(\tau_{\Xi^\mathrm{lib}(N)},\tau\big).  
$$
Since the right-hand side is independent of the choice of $s$, the desired uniform (in time $s$) convergence follows. \qed

\subsection{A slight generalization of \cite[Proposition 3.5]{Levy:arXiv:1112.2452v2}} 
Let $w = w(1)\cdots w(r)$ be a word in the letters $d_1,\dots,d_r$ and $u_1^{\pm1},\dots,u_r^{\pm1}$. Define 
$$
\varepsilon_k := 
\begin{cases} 
+1 & \text{($w(k) = d_*$ or $w(k) = u_* = u_*^{+1}$)}, \\
-1 & \text{($w(k) = u_*^{-1}$)}
\end{cases}
$$
for $1 \leq k \leq r$. In what follows, we may regard $k \mapsto w(k)$ as a function from $\{1,\dots,r\}$ to the letters $d_i, u_i^{\pm1}$. Let $U_N^{(i)}$, $i=1,2,\dots$, be independent left unitary Brownian motions as before, and $D_i \in M_N(\mathbb{C})$, $i=1,2,\dots$, be given matrices. The substitution of $D_i$ and $U_N^{(i)}(t_i)$ for $d_i$ and $u_i$, respectively, into $w$ is denoted by 
$$
w(D_\bullet, U_N^{(\bullet)}(t_\bullet)) = W_N = W_N(1)\cdots W_N(r)
$$
(whose values are taken in $M_N(\mathbb{C})$) with $W_N(k) = D_i$ or $W_N(k) = U_N^{(i)}(t_i)^{\pm1}$. Moreover, we set 
$$
w_\otimes(D_\bullet, U_N^{(\bullet)}(t_\bullet)) = W^\otimes_N = W_N(1)\otimes\cdots\otimes W_N(r) 
$$
(whose values are taken in $M_N(\mathbb{C})^{\otimes r}$).

With the permutation representation $\rho : \mathbb{C}[\mathfrak{S}_r] \curvearrowright (\mathbb{C}^N)^{\otimes r}$ (see subsection 4.2.2) we write 
$$
p_N(t;\sigma) := \mathbb{E}\Big[\frac{1}{N^{\sharp(\sigma)}}\mathrm{Tr}^{\otimes r}(\rho(\sigma) W_N^\otimes)\Big] 
$$
with $t = (t_1,\dots,t_r)$. ({\it n.b.}~ $\sharp(\sigma)$ denotes the number of cycles in $\sigma$ as before.) The family $p_N(t;\sigma)$, $\sigma \in \mathfrak{S}_r$, forms an $r!$ dimension column vector $p_N(t)$ with indices $\mathfrak{S}_r$. We introduce the operation $\Pi_{l,m}^{\varepsilon,\delta}$ on $\mathfrak{S}_r$, $1 \leq l, m \leq r$, $\varepsilon,\delta \in \{\pm1\}$, defined by
$$
\Pi_{l,m}^{\varepsilon,\delta}(\sigma) := 
\begin{cases} 
\sigma(l,m) & (\varepsilon = \delta = +1), \\
(l,m)\sigma & (\varepsilon = \delta = -1), \\
(\sigma(l),m)\sigma & (\varepsilon = +1, \delta = -1), \\
(\sigma(m),l)\sigma = \sigma (\sigma^{-1}(l),m) & (\varepsilon = -1, \delta = +1). 
\end{cases}
$$
A tedious calculation confirms that 
\begin{equation}\label{Eq19} 
\Pi_{l,m}^{\varepsilon,\delta}\circ\Pi_{l',m'}^{\varepsilon',\delta'} = \Pi_{l',m'}^{\varepsilon',\delta'}\circ\Pi_{l,m}^{\varepsilon,\delta} \quad \text{as long as $\{l,m\} \cap \{l',m'\} = \emptyset$} 
\end{equation}
for any choice of $\varepsilon,\varepsilon',\delta,\delta'$.  We also define the $r!\times r!$ matrices $A_i(w)$ (with indices $\mathfrak{S}_r$) by setting the $(\sigma,\sigma')$-entry as
$$
A_i(w)_{\sigma,\sigma'} 
:= 
- \frac{1}{2}\big|w^{-1}(\{u_i^{\pm}\})\big|\,\delta_{\sigma,\sigma'} 
- \sum_{\substack{l,m \in w^{-1}(\{u_i^{\pm}\}) \\ l < m,\, l \overset{\sigma}{\sim} m}} \varepsilon_l \varepsilon_m\, \delta_{\Pi_{l,m}^{\varepsilon_l,\varepsilon_m}(\sigma),\sigma'}, 
$$ 
where $|w^{-1}(\{u_i^{\pm}\})|$ denotes the number of elements of $w^{-1}(\{u_i^{\pm}\})$ and $l \overset{\sigma}{\sim} m$ means that both $l,m$ are in a common cycle of $\sigma$. Then the matrices $A_i(w)$ mutually commute, since the $w^{-1}(\{u_i^{\pm1}\})$ are disjoint. In what follows, $\Vert\,-\,\Vert_\infty$ means the $\ell_\infty$-norm on the $r!$-dimensional vector space of column vectors. The next proposition is just a slight generalization of \cite[Proposition 3.5]{Levy:arXiv:1112.2452v2}, whose proof is a reorganization of the original one. 

\begin{proposition}\label{P4.6} With $\gamma_r := (1,2,\dots,r) \in \mathfrak{S}_r$ we have 
\begin{align*}
&\Big|\mathbb{E}\big[\mathrm{tr}_N(w(D_\bullet,U_N^{(\bullet)}(t_\bullet)))\big]
- 
\sum_{\sigma \in \mathfrak{S}_r} \big(\exp\big(\sum_{i=1}^r t_i A_i(w)\big)\big)_{\gamma_r^{-1},\sigma}\,\frac{1}{N^{\sharp(\sigma)}}\mathrm{Tr}_N^{\otimes r}(\rho(\sigma) w_\otimes(D_i, I_N))\Big| \\ 
&\leq 
\big\Vert p_N(t) - \exp\big(\sum_{i=1}^r t_i A_i(w)\big)p_N(0) \big\Vert_\infty \\
&\leq 
\frac{1}{2N^2}\Big(\sum_{i=1}^r t_i \big|w^{-1}(\{u_i^{\pm1}\})\big|^2\Big) 
\mathrm{e}^{\sum_{i=1}^r  t_i \big|w^{-1}(\{u_i^{\pm1}\})\big|^2} \, \big\Vert p_N(0) \big\Vert_\infty 
\leq 
\frac{r^3 T}{2N^2}\,\mathrm{e}^{r^3 T} \, \big\Vert p_N(0) \big\Vert_\infty
\end{align*}
with $0 = (0,\dots,0)$ and $T := \max_{1\leq i\leq r} t_i$, and furthermore
$$
\Big| \big(\exp\big(\sum_{i=1}^r t_i A_i(w)\big)\big)_{\sigma,\sigma'}\Big| 
\leq \mathrm{e}^{\frac{1}{2}\sum_{i=1}^r t_i \big|w^{-1}(\{u_i^{\pm1}\})\big|^2} 
\leq \mathrm{e}^{\frac{r^3 T}{2}}.  
$$
\end{proposition}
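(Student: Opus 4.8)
The plan is to follow the argument of L\'{e}vy \cite[Proposition 3.5]{Levy:arXiv:1112.2452v2}, reorganized so that the several time parameters $t_1,\dots,t_r$ are handled at once; the only new features are that distinct $U_N^{(i)}$ are independent and that the letters $d_i$ carry no martingale part, so both are inert in the It\^{o} calculus. First observe that $\mathbb{E}\big[\mathrm{tr}_N(w(D_\bullet,U_N^{(\bullet)}(t_\bullet)))\big] = p_N(t;\gamma_r^{-1})$, and that the second sum in the statement is exactly the $\gamma_r^{-1}$-coordinate of $\exp\big(\sum_i t_i A_i(w)\big)p_N(0)$ (since $p_N(0;\sigma) = \frac{1}{N^{\sharp(\sigma)}}\mathrm{Tr}_N^{\otimes r}(\rho(\sigma)\,w_\otimes(D_\bullet,I_N))$ because $U_N^{(i)}(0)=I_N$); hence the first inequality is just the bound of one coordinate by the $\ell_\infty$-norm. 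So everything reduces to (i) the estimate on $\big\Vert p_N(t) - \exp\big(\sum_i t_i A_i(w)\big)p_N(0)\big\Vert_\infty$ and (ii) the entrywise bound on $\exp\big(\sum_i t_i A_i(w)\big)$. Set $m_i := \big|w^{-1}(\{u_i^{\pm1}\})\big|$ and $a_i := m_i^2$.

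The core of (i) is an approximate linear ODE for the column vector $p_N(t)$. Fixing $i$ and freezing the other time variables, apply It\^{o}'s formula to $s \mapsto \frac{1}{N^{\sharp(\sigma)}}\mathrm{Tr}_N^{\otimes r}(\rho(\sigma)\,W_N^{\otimes})$ using $\mathrm{d}U_N^{(i)} = \mathrm{i}\,\mathrm{d}H_N^{(i)}\,U_N^{(i)} - \tfrac12 U_N^{(i)}\,\mathrm{d}s$ together with its adjoint form $\mathrm{d}(U_N^{(i)\,*}) = -U_N^{(i)\,*}\,\mathrm{i}\,\mathrm{d}H_N^{(i)} - \tfrac12 U_N^{(i)\,*}\,\mathrm{d}s$ for the letters $u_i^{-1}$; after taking expectations the martingale part vanishes. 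The drift of each of the $m_i$ letters equal to $u_i^{\pm1}$ contributes the diagonal term $-\tfrac12 m_i$, while the It\^{o} cross-correction between two such letters at positions $l < m$ contributes, via the identity $\sum_{\alpha,\beta}C_{\alpha\beta}\otimes C_{\alpha\beta} = (\text{the flip on }\mathbb{C}^N\otimes\mathbb{C}^N)$, a term $-\varepsilon_l\varepsilon_m\,\frac{1}{N}\cdot\frac{1}{N^{\sharp(\sigma)}}\mathrm{Tr}_N^{\otimes r}(\rho(\widetilde\sigma)\,W_N^{\otimes})$ with $\widetilde\sigma = \Pi_{l,m}^{\varepsilon_l,\varepsilon_m}(\sigma)$, where the four sign patterns $(\varepsilon_l,\varepsilon_m)$ record on which side of $U_N^{(i)}$ or of $U_N^{(i)\,*}$ the flip operator attaches (this identification is precisely L\'{e}vy's computation). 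If $l \overset{\sigma}{\sim} m$ then $\sharp(\widetilde\sigma) = \sharp(\sigma)+1$, so $\frac{1}{N}N^{-\sharp(\sigma)} = N^{-\sharp(\widetilde\sigma)}$ and the term equals $-\varepsilon_l\varepsilon_m\,p_N(s;\widetilde\sigma)$; if $l \not\overset{\sigma}{\sim} m$ then $\sharp(\widetilde\sigma) = \sharp(\sigma)-1$, so the same factor equals $N^{-2}N^{-\sharp(\widetilde\sigma)}$ and the term is $N^{-2}$ times a coordinate of $p_N(s)$. Hence
\[
\frac{\partial}{\partial t_i}p_N(t) = A_i(w)\,p_N(t) + \frac{1}{N^2}R_i^N(t), \qquad \big\Vert R_i^N(t)\big\Vert_\infty \le \binom{m_i}{2}\,\big\Vert p_N(t)\big\Vert_\infty \le \frac{a_i}{2}\,\big\Vert p_N(t)\big\Vert_\infty .
\]

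Granting this, the rest of (i) is soft. Because the $A_i(w)$ commute, $t \mapsto \exp\big(\sum_i t_i A_i(w)\big)$ is the solution operator of the homogeneous system, and its $\ell_\infty$-operator norm (i.e.\ its maximal absolute row sum) is $\le \exp\big(\tfrac12\sum_i t_i a_i\big)$, since each row of $A_i(w)$ has absolute sum $\le \tfrac12 m_i + \binom{m_i}{2} = \tfrac12 m_i^2$. Integrating the inhomogeneous system by Duhamel's formula along the coordinate path $0 \to (t_1,0,\dots,0) \to \cdots \to (t_1,\dots,t_r)$ gives
\[
p_N(t) = \exp\Big(\sum_i t_i A_i(w)\Big)p_N(0) + \frac{1}{N^2}\sum_{i=1}^{r} \exp\Big(\sum_{j>i}t_j A_j(w)\Big)\int_0^{t_i}\exp\big((t_i-s)A_i(w)\big)\,R_i^N\big(t_1,\dots,t_{i-1},s,0,\dots,0\big)\,\mathrm{d}s .
\]
A Gronwall estimate along the same path first yields $\Vert p_N(s)\Vert_\infty \le \exp\big(\sum_j s_j a_j\big)\Vert p_N(0)\Vert_\infty$ for all $N \ge 1$ (using $\tfrac{a_i}{2}(1+N^{-2}) \le a_i$); feeding this and the row-sum bound into the Duhamel identity, and checking that on the $i$-th segment the exponents combine to at most $\sum_j t_j a_j$, the $i$-th summand is $\le \tfrac{a_i}{2}t_i e^{\sum_j t_j a_j}\Vert p_N(0)\Vert_\infty$, and summing over $i$ gives $\tfrac{1}{2N^2}\big(\sum_i t_i a_i\big)e^{\sum_i t_i a_i}\Vert p_N(0)\Vert_\infty$; since $\sum_i m_i \le r$ one has $\sum_i t_i a_i \le T r^2 \le r^3 T$, which is the last bound. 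For (ii), write $A_i(w) = -\tfrac12 m_i I - B_i$ with $B_i$ having at most $\binom{m_i}{2}$ unimodular entries in each row; since the scalar part commutes, $\big|\exp\big(\sum_i t_i A_i(w)\big)_{\sigma,\sigma'}\big| = e^{-\frac12\sum_i t_i m_i}\,\big|\exp\big(-\sum_i t_i B_i\big)_{\sigma,\sigma'}\big| \le e^{-\frac12\sum_i t_i m_i}\,e^{\sum_i t_i \binom{m_i}{2}} = e^{\frac12\sum_i t_i m_i(m_i-2)} \le e^{\frac12\sum_i t_i m_i^2} \le e^{r^3 T/2}$.

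The genuine obstacle is the It\^{o}/combinatorial bookkeeping of the second paragraph: verifying that each of the four sign patterns $(\varepsilon_l,\varepsilon_m)$ yields exactly the permutation $\Pi_{l,m}^{\varepsilon_l,\varepsilon_m}(\sigma)$ and the sign $-\varepsilon_l\varepsilon_m$, and that the same-cycle/different-cycle dichotomy cleanly separates the exact operator $A_i(w)$ from a remainder of size $N^{-2}$. This is exactly L\'{e}vy's argument; in our setting it is unchanged once one notes that the $d_i$-letters contribute to no quadratic variation, that distinct $U_N^{(i)}$ are independent (so no cross-correction between different indices $i$), and that the same $t_i$ is attached to every occurrence of $u_i^{\pm1}$, which is why the single derivative $\partial/\partial t_i$ collects precisely the pairs indexed by $w^{-1}(\{u_i^{\pm1}\})$. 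All the downstream steps --- Duhamel, Gronwall, and the elementary row-sum bounds --- are routine.
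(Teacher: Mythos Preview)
Your proof is correct, and the It\^{o}/combinatorial identification you cite is exactly what the paper (following L\'{e}vy) uses. However, your route from the differential system to the estimate differs from the paper's. The paper observes that the remainder is itself a linear operator $C_i(w)$ (your $R_i^N(t) = C_i(w)p_N(t)$) and, crucially, that all of the $A_i(w)$ and $C_i(w)$ mutually commute by~\eqref{Eq19}; this lets one write the exact solution as a single matrix exponential $p_N(t)=\exp\big(\sum_i t_i(A_i+N^{-2}C_i)\big)p_N(0)$ and then bound the difference of two exponentials via the interpolation $\exp(A+N^{-2}C)-\exp A=\int_0^1\exp(s(A+N^{-2}C))\,N^{-2}C\,\exp((1-s)A)\,\mathrm{d}s$. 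You instead treat the remainder as a generic inhomogeneity bounded only in norm, integrate by Duhamel along the coordinate path, and close with Gronwall. Your approach uses less structure (only the commutation of the $A_i$, not of the $C_i$) and is therefore more robust, at the cost of the somewhat ad hoc path integration; the paper's approach is shorter and cleaner once the full commutation is noted. Both yield the same constant $\tfrac{1}{2N^2}\big(\sum_i t_i m_i^2\big)e^{\sum_i t_i m_i^2}$. For the entrywise bound you split off the scalar diagonal part and get $e^{\frac12\sum_i t_i m_i(m_i-2)}\le e^{\frac12\sum_i t_i m_i^2}$, slightly sharper than but consistent with the paper's direct use of $\Vert A_i(w)\Vert\le\tfrac12 m_i^2$.
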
  

Remark that $\frac{1}{N^{\sharp(\sigma)}}\mathrm{Tr}_N^{\otimes r}(\rho(\sigma) w_\otimes(D_\bullet, I_N))$ is a product of moments in the $D_i$ with respect to $\mathrm{tr}_N$ of degree less than $r$. Hence \emph{the above proposition (together with the method in the previous subsection) strengthens Biane's asymptotic freeness result \cite[Theorem 1(2)]{Biane:Fields97} for left unitary Brownian motions with constant matrices in the fashion that the convergence as $N\to\infty$ is uniform on finite time intervals}.   

\begin{proof} (A reproduction of the proof of \cite[Proposition 3.5]{Levy:arXiv:1112.2452v2}.) 
The algebra $M_N(\mathbb{C})^{\otimes r}$ has $r$ different $M_N(\mathbb{C})$-bimodule structures 
$$
M_N(\mathbb{C}) \overset{\theta_k^{(+1)}}{\curvearrowright} M_N(\mathbb{C})^{\otimes r} \overset{\theta_k^{(-1)}}{\curvearrowleft} M_N(\mathbb{C})
$$
defined by 
\begin{align*}
\theta_k^{(+1)}(X) (Y_1\otimes\cdots\otimes Y_k \otimes \cdots \otimes Y_r) 
&:= Y_1\otimes\cdots\otimes XY_k \otimes \cdots \otimes Y_r, \\
\theta_k^{(-1)}(X) (Y_1\otimes\cdots\otimes Y_k \otimes \cdots \otimes Y_r) 
&:= Y_1\otimes\cdots\otimes Y_k X \otimes \cdots \otimes Y_r 
\end{align*}
for $X \in M_N(\mathbb{C})$ and $Y_1 \otimes \cdots \otimes Y_r \in M_N(\mathbb{C})^{\otimes r}$. The It\^{o} formula enables us to obtain (see \cite[Lemma 3.7]{Levy:arXiv:1112.2452v2}) that 
\begin{equation}\label{Eq20} 
\begin{aligned}
\frac{\partial}{\partial t_i} p_N(t;\sigma) 
&= 
- 
\frac{1}{2}\big|w^{-1}(\{u_i^{\pm}\})\big|\,p_N(t;\sigma) \\
&\qquad+ 
\sum_{\substack{l,m \in w^{-1}(\{u_i^{\pm1}\}) \\ l < m}} \varepsilon_l \varepsilon_m\,\frac{1}{N^{\sharp(\sigma)}} \mathbb{E}\Big[\mathrm{Tr}_N\big((\theta_l^{-\varepsilon_l}\otimes\theta_m^{-\varepsilon_m})(C_{\mathfrak{u}(N)}) \rho(\sigma) W_N^\otimes\big)\Big], 
\end{aligned}
\end{equation}
where $C_{\mathfrak{u}(N)} = -\frac{1}{N} \sum_{\alpha,\beta=1}^N E_{\alpha\beta}\otimes E_{\beta\alpha}$ with matrix units $E_{\alpha\beta}$ for $M_N(\mathbb{C})$.  
Then, by \cite[Lemmas 3.8 and 3.9]{Levy:arXiv:1112.2452v2} we have 
$$
\frac{1}{N^{\sharp(\sigma)}} \mathbb{E}\Big[\mathrm{Tr}_N\big((\theta_l^{-\varepsilon_l}\otimes\theta_m^{-\varepsilon_m})(C_{\mathfrak{u}(N)}) \rho(\sigma) W_N^\otimes\big)\Big] = 
\begin{cases} 
-p_N(t;\Pi_{l,m}^{\varepsilon_l,\varepsilon_m}(\sigma)) & (l \overset{\sigma}{\sim} m), \\
-\frac{1}{N^2}p_N(t;\Pi_{l,m}^{\varepsilon_l,\varepsilon_m}(\sigma)) & (l \overset{\sigma}{\not\sim} m). 
\end{cases}
$$
Therefore, with the $r!\times r!$ matrices $C_i(w)$ (with indices $\mathfrak{S}_r$): 
$$
C_i(w)_{\sigma,\sigma'} 
:= 
- \sum_{\substack{l,m \in w^{-1}(\{u_i^{\pm}\}) \\ l < m,\, l \overset{\sigma}{\not\sim} m}} \varepsilon_l \varepsilon_m\, \delta_{\Pi_{l,m}^{\varepsilon_l,\varepsilon_m}(\sigma),\sigma'},  
$$ 
we can rewrite \eqref{Eq20} as   
$$
\frac{\partial}{\partial t_i} p_N(t) = \big(A_i(w) + \frac{1}{N^2} C_i(w)\big)p_N(t) \quad (i=1,\dots,r), 
$$ 
which implies that 
$$
p_N(t) = \exp\Big(\sum_i t_i A_i(w) + \frac{1}{N^2} \sum_i t_i C_i(w)\Big)p_N(0),  
$$ 
since the $A_i(w)$ and the $C_i(w)$ mutually commute due to \eqref{Eq19}. 

Let $\Vert\,-\,\Vert$ denote the operator norm with respect to $\Vert\,-\,\Vert_\infty$ on the $r!$-dimensional vector space of column vectors. Observe that 
\begin{align*}
\Vert C_i(w)\Vert 
&\leq 
\frac{\big|w^{-1}(\{u_i^{\pm1}\})\big|\big(\big|w^{-1}(\{u_i^{\pm1}\})\big|-1\big)}{2} \leq \frac{1}{2}\big|w^{-1}(\{u_i^{\pm1}\})\big|^2, \\
\Vert A_i(w)\Vert 
&\leq 
\frac{1}{2}\big|w^{-1}(\{u_i^{\pm1}\})\big|^2. 
\end{align*}
Write $A :=  \sum_i t_i A_i(w)$ and $C :=  \sum_i t_i C_i(w)$ for simplicity. Then we have 
\begin{align*}
\Big\Vert &p_N(t) - (\exp A)\,p_N(0) \Big\Vert_\infty \\
&\leq 
\Big\Vert \exp\big(A + \frac{1}{N^2}C\big) - \exp A \Big\Vert \Vert p_N(0)\Vert_\infty \\
&= 
\Big\Vert \int_0^1 \frac{\mathrm{d}}{\mathrm{d}s}\big(\exp\big(s(A+ \frac{1}{N^2}C\big)\big)\exp((1-s)A)\big)\,\mathrm{d}s\Big\Vert\,\Vert p_N(0)\Vert_\infty\\
&\leq 
\Bigg(\int_0^1 \Big\Vert \exp\big(s\big(A+ \frac{1}{N^2}C\big)\big) \Big\Vert\,\Big\Vert\frac{1}{N^2}C\Big\Vert\,\Big\Vert \exp((1-s)A) \Big\Vert\,\mathrm{d}s\Bigg)\,\Vert p_N(0)\Vert_\infty \\
&\leq
\Bigg(\frac{1}{N^2} \Vert C\Vert\,\mathrm{e}^{\Vert A\Vert}  \int_0^1\mathrm{e}^{\frac{s}{N^2}\Vert C\Vert}\,\mathrm{d}s\Bigg)\,\Vert p_N(0)\Vert_\infty \\
&\leq 
\frac{1}{N^2} \Vert C\Vert\,\mathrm{e}^{\Vert A\Vert + \Vert C\Vert}\,\Vert p_N(0)\Vert_\infty \\
&\leq 
\frac{1}{2N^2}\Big(\sum_{i=1}^r t_i \big|w^{-1}(\{u_i^{\pm1}\})\big|^2\Big) 
\exp\big(\sum_{i=1}^r  t_i \big|w^{-1}(\{u_i^{\pm1}\})\big|^2\big)\,\Vert p_N(0)\Vert_\infty. 
\end{align*}  
Hence we are done. \end{proof}

\section{Large Deviation Upper Bound} 

This section is concerned with the proof of the desired large deviation upper bound for $\tau_{\Xi^\mathrm{lib}(N)}$. To this end, we prove in subsection 5.1  the exponential tightness of the sequence of probability measures $\mathbb{P}(\tau_{\Xi^\mathrm{lib}(N)} \in\,\cdot\,)$, and then, in subsection 5.2, introduce and investigate an appropriate rate function by looking at Proposition \ref{P3.2}. In subsection 5.3, with these preparations, we finalize the proof by using Theorem \ref{T4.1} (with Proposition \ref{P2.3}). 

\subsection{Exponential tightness} Let us start with the next exponential estimate for left unitary Brownian motions. This lemma is inspired by the proof of \cite[Lemma 2.5]{CabanalDuvillardGuionnet:AnnProbab01}. 

\begin{proposition}\label{P5.1} Let $U_N$ be an $N\times N$ left unitary Brownian motion as in the introduction. Then 
\begin{align*} 
\mathbb{P}\Big( \sup_{s \leq t \leq s+\delta} \Vert U_N(s) - U_N(t)\Vert_{\mathrm{tr}_N,2} \geq \varepsilon \Big) \leq 2\sqrt{2}\,\mathrm{e}^{-N^2 L (\varepsilon^2-(8L+1)\delta)}
\end{align*}
holds for every $s \geq 0$, $\varepsilon > 0$, $\delta > 0$ and $L > 0$.
\end{proposition}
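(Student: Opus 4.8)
\emph{Setup and reduction to $s=0$.} The plan is a standard exponential-martingale argument. Throughout write $\Vert X\Vert_{\mathrm{tr}_N,2}^2=\mathrm{tr}_N(X^*X)$, and recall $\Vert U_N(t)\Vert_{\mathrm{tr}_N,2}=1$. For $t\geq s$, since $U_N(s)$ is unitary we have $\Vert U_N(s)-U_N(t)\Vert_{\mathrm{tr}_N,2}=\Vert I_N-U_N(t)U_N(s)^*\Vert_{\mathrm{tr}_N,2}$, and by the left increment property the process $u\mapsto U_N(s+u)U_N(s)^*$, $u\geq0$, has the same distribution as $u\mapsto U_N(u)$. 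Hence the probability in the statement equals $\mathbb{P}\big(\sup_{0\leq u\leq\delta}\Vert U_N(u)-I_N\Vert_{\mathrm{tr}_N,2}\geq\varepsilon\big)$ for \emph{every} $s\geq0$; this makes the final bound automatically uniform in $s$, and I may take $s=0$.

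\emph{It\^o decomposition.} Put $\phi(t):=\Vert U_N(t)-I_N\Vert_{\mathrm{tr}_N,2}^2=2\big(1-\mathrm{Re}\,\mathrm{tr}_N(U_N(t))\big)$, so that $0\leq\phi(t)\leq 4$. Applying It\^o's formula to the defining SDE of $U_N$, using the completeness relation $\sum_{\alpha,\beta}\mathrm{Tr}_N(C_{\alpha\beta}X)\mathrm{Tr}_N(C_{\alpha\beta}Y)=\mathrm{Tr}_N(XY)$ of the orthogonal basis $\{C_{\alpha\beta}\}$, one obtains
\[
\phi(t)=\int_0^t\Big(1-\tfrac12\phi(u)\Big)\mathrm{d}u+m_N(t),\qquad \mathrm{d}m_N(t)=\mathrm{i}\,\mathrm{tr}_N\big(\mathrm{d}H_N(t)\,(U_N(t)^*-U_N(t))\big),
\]
where $m_N$ is a continuous \emph{real} martingale with $m_N(0)=0$ (the constant part of the drift is the It\^o second-order term, the $-\tfrac12\phi$ part is the $-\tfrac12U_N\,\mathrm{d}t$ drift). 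Its quadratic variation is $\mathrm{d}\langle m_N\rangle(t)=\tfrac1{N^2}\,\mathrm{tr}_N\big((U_N(t)^*-U_N(t))(U_N(t)-U_N(t)^*)\big)\mathrm{d}t$, and since $\Vert U_N(t)^*-U_N(t)\Vert_{\mathrm{tr}_N,2}\leq 2\Vert U_N(t)-I_N\Vert_{\mathrm{tr}_N,2}$ this is $\leq\tfrac{4\phi(t)}{N^2}\mathrm{d}t\leq\tfrac{16}{N^2}\mathrm{d}t$, hence $\langle m_N\rangle(t)\leq 16t/N^2$. Moreover $1-\tfrac12\phi\leq1$ gives $\phi(t)\leq t+m_N(t)$, so
\[
\mathbb{P}\Big(\sup_{0\leq t\leq\delta}\Vert U_N(t)-I_N\Vert_{\mathrm{tr}_N,2}\geq\varepsilon\Big)=\mathbb{P}\Big(\sup_{0\leq t\leq\delta}\phi(t)\geq\varepsilon^2\Big)\leq\mathbb{P}\Big(\sup_{0\leq t\leq\delta}m_N(t)\geq\varepsilon^2-\delta\Big).
\]

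\emph{Exponential martingale and Doob.} For $\lambda>0$ the stochastic exponential $Z_\lambda(t):=\exp\big(\lambda m_N(t)-\tfrac{\lambda^2}{2}\langle m_N\rangle(t)\big)$ is a nonnegative (super)martingale with $Z_\lambda(0)=1$ (in fact a bounded martingale, as $m_N$ and $\langle m_N\rangle$ are bounded on $[0,\delta]$). If $a:=\varepsilon^2-\delta\leq0$ the asserted inequality is trivial since its right-hand side is $\geq1$; otherwise, evaluating $Z_\lambda$ at the first time $m_N$ reaches $a$ and using $\langle m_N\rangle\leq 16\delta/N^2$ there gives $\{\sup_{t\leq\delta}m_N(t)\geq a\}\subseteq\{\sup_{t\leq\delta}Z_\lambda(t)\geq e^{\lambda a-8\lambda^2\delta/N^2}\}$, so Doob's maximal inequality yields $\mathbb{P}\big(\sup_{t\leq\delta}m_N(t)\geq a\big)\leq e^{-\lambda a+8\lambda^2\delta/N^2}$. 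Choosing $\lambda=N^2L$ turns this into $\exp\big(-N^2L(\varepsilon^2-\delta)+8N^2L^2\delta\big)=\exp\big(-N^2L(\varepsilon^2-(8L+1)\delta)\big)$, which implies the claim (with slack absorbed into the constant $2\sqrt2\geq1$).

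\emph{Main obstacle.} The only genuinely delicate point is the It\^o computation of $\langle m_N\rangle$: obtaining the crucial $1/N^2$ scaling there — which rests on the completeness relation for $\{C_{\alpha\beta}\}$ together with the unitarity of $U_N(t)$ — is what makes the estimate work at the right scale; everything else is bookkeeping, and the reduction in the first paragraph is what delivers uniformity in $s$.
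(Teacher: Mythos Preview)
Your proof is correct and follows essentially the same strategy as the paper: the same reduction to $s=0$ via the left increment property, the same It\^o decomposition of $\phi(t)=\Vert U_N(t)-I_N\Vert_{\mathrm{tr}_N,2}^2$ into a bounded drift plus a real martingale with quadratic variation of order $t/N^2$, and the same exponential-martingale/Chebyshev endgame. The only difference is cosmetic: the paper bounds the two-sided supremum $\sup|M_N|$ via Doob's $L^2$ inequality combined with repeated Cauchy--Schwarz (which produces the factor $2\sqrt{2}$ and the constant $8$ in $(8L+1)$), whereas you observe that the one-sided bound $\phi(t)\leq t+m_N(t)$ suffices and apply Doob's $L^1$ inequality directly to the stochastic exponential, which is a bit cleaner and in fact yields the stated estimate without the $2\sqrt{2}$.
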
 
\begin{proof} 
With $Z_N(t) := \mathrm{tr}_N(2\mathrm{Re}(I_N - U_N(t)))$ we observe that 
\begin{align*} 
\mathbb{P}\Big(\sup_{s \leq t \leq s+\delta} \Vert U_N(s) - U_N(t)\Vert_{\mathrm{tr}_N,2} \geq \varepsilon\Big) 
&= 
\mathbb{P}\Big(\sup_{0 \leq t \leq \delta} \Vert U_N(s) - U_N(s+t)\Vert_{\mathrm{tr}_N,2}^2 \geq \varepsilon^2\Big) \\
&= 
\mathbb{P}\Big(\sup_{0 \leq t \leq \delta} \Vert U_N(s+t)U_N(s)^* - I_N\Vert_{\mathrm{tr}_N,2}^2 \geq \varepsilon^2\Big) \\
&= 
\mathbb{P}\Big(\sup_{0 \leq t \leq \delta} \Vert U_N(t) - I_N \Vert_{\mathrm{tr}_N,2}^2 \geq \varepsilon^2\Big) \\
&= 
\mathbb{P}\Big(\sup_{0 \leq t \leq \delta}Z_N(t) \geq \varepsilon^2\Big)
\end{align*}
by the left increment property of left unitary Brownian motions. Thus it suffices to estimate $\mathbb{P}(\sup_{0 \leq t \leq \delta}Z_N(t) \geq \varepsilon^2)$ from the above. 

One has
\begin{align*}
2\mathrm{Re}(I_N - U_N(t)) 
&= -\int_0^t \mathrm{i}\,(\mathrm{d}H_N(s)U_N(s) - U_N(s)^*\,\mathrm{d}H_N(s)) + \int_0^t \mathrm{Re}(U_N(s))\,\mathrm{d}s, 
\end{align*}
since $\mathrm{d}U_N(t) = \mathrm{i}\,\mathrm{d}H_N(t)\,U_N(t) - \frac{1}{2} U_N(t)\,\mathrm{d}t$ with $N\times N$ self-adjoint Brownian motion $H_N$. Set 
$$
\widetilde{M}_N(t) := -\int_0^t \mathrm{i}\,(\mathrm{d}H(s)U_N(s) - U_N(s)^*\,\mathrm{d}H(s)) 
= 2\,\mathrm{Re}(I_N - U_N(t)) - \int_0^t \mathrm{Re}(U_N(s))\,\mathrm{d}s,
$$
and observe that $M_N(t) := \mathrm{tr}_N(\widetilde{M}_N(t)) = Z_N(t) - \int_0^t \mathrm{Re}(\mathrm{tr}_N(U_N(s)))\,\mathrm{d}s$ defines a martingale. Let $C_{\alpha\beta}$ be the standard orthogonal basis of $M_N(\mathbb{C})^\mathrm{sa}$ as in the introduction. Then $H_N(t) = \sum_{\alpha,\beta=1}^N \frac{B_{\alpha\beta}(t)}{\sqrt{N}} C_{\alpha\beta}$ with an $N^2$-dimensional standard Brownian motion $B_{\alpha\beta}$. This expression enables us to compute the quadratic variation 
\begin{align*}  
\langle M_N \rangle(t)
&= 
\frac{1}{N^3} \sum_{\alpha,\beta=1}^N \int_0^t \mathrm{Tr}_N\big(\,\mathrm{i}\,(C_{\alpha\beta}U_N(s) - U_N(s)^* C_{\alpha\beta})\big)^2\,\mathrm{d}t \\
&= 
\frac{1}{N^3} \sum_{\alpha,\beta=1}^N \int_0^t \mathrm{Tr}_N\big(\,\mathrm{i}\,(U_N(s) - U_N(s)^*) C_{\alpha\beta}\big)^2\,\mathrm{d}t \\
&= 
\frac{1}{N^2} \int_0^t \big\Vert \mathrm{i}(U_N(s) - U_N(s)^*) \big\Vert_{\mathrm{tr}_N,2}^2\,\mathrm{d}t \leq \frac{4t}{N^2} 
\end{align*}
as in section 3. 

Note that $Z_N(t) = M_N(t) + \int_0^t \mathrm{Re}(\mathrm{tr}_N(U_N(s)))\,\mathrm{d}s \leq |M_N(t)| + t$. Hence, if $\sup_{0 \leq t \leq \delta}Z_N(t) \geq \varepsilon^2$, then we have both $\sup_{0 \leq t \leq \delta}|M_N(t)|  \geq \varepsilon^2 - \delta$ and 
\begin{align*}
&\sup_{0\leq t \leq \delta} \exp(-N^2 L M_N(t)) + \sup_{0\leq t \leq \delta} \exp(N^2 L M_N(t)) \\
&\qquad\geq 
\sup_{0 \leq t \leq \delta} \Big(\exp(-N^2 L M_N(t)) + \exp(N^2 L M_N(t))\Big) \\
&\qquad\geq 
\sup_{0 \leq t \leq \delta} \exp(N^2 L |M_N(t)|) 
\geq 
\mathrm{e}^{N^2 L (\varepsilon^2 - \delta)}
\end{align*} 
for any fixed $L > 0$. Thus we get 
\begin{align*}
&\mathbb{P}\Big(\sup_{0 \leq t \leq \delta} Z_N(t) \geq \varepsilon^2\Big) 
\leq 
\mathbb{P}\Big( \sup_{0 \leq t \leq \delta}|M_N(t)|  \geq \varepsilon^2 - \delta\Big) \\
&\leq 
\mathrm{e}^{-N^2 L (\varepsilon^2 - \delta)} \Bigg\{\mathbb{E}\Big[\sup_{0\leq t \leq \delta} \exp(-N^2 L M_N(t))\Big] + \mathbb{E}\Big[\sup_{0\leq t \leq \delta} \exp(N^2 L M_N(t))\Big]\Bigg\}
\end{align*} 
by Chebyshev's inequality. We have 
\begin{align*} 
&\mathbb{E}\Big[\sup_{0 \leq t \leq \delta}\exp(\pm N^2 L M_N(t))\Big] \\
&= 
\mathbb{E}\Big[\sup_{0 \leq t \leq \delta}\Big(\exp\Big(\pm N^2 L M_N(t) - \frac{1}{2}\langle \pm N^2 L M_N\rangle(t)\Big)\exp\Big(\frac{1}{2}\langle \pm N^2 L M_N\rangle(t)\Big)\Big)\Big] \\
&\leq 
\mathbb{E}\Big[\sup_{0 \leq t \leq \delta}\exp\Big(\pm N^2 L M_N(t) - \frac{1}{2}\langle \pm N^2 L M_N\rangle(t)\Big) \times \exp\Big(\frac{1}{2}\langle \pm N^2 L M_N\rangle(\delta)\Big)\Big] \\
&\leq 
\mathbb{E}\Big[\sup_{0 \leq t \leq \delta}\exp\Big(\pm N^2 L M_N(t) - \frac{1}{2}\langle \pm N^2 L M_N\rangle(t)\Big)^2\Big]^{1/2} \mathbb{E}\Big[\exp\Big(\frac{1}{2}\langle \pm N^2 L M_N\rangle(\delta)\Big)^2\Big]^{1/2} \\
&\leq 
\mathbb{E}\Big[\sup_{0 \leq t \leq \delta}\exp\Big(\pm N^2 L M_N(t) - \frac{1}{2}\langle \pm N^2 L M_N\rangle(t)\Big)^2\Big]^{1/2} \mathbb{E}\Big[\exp\Big(N^4 L^2 \langle M_N\rangle(\delta)\Big)\Big]^{1/2}
\end{align*}
by the Cauchy--Schwarz inequality. Since $t \mapsto \exp\big(\pm N^2 L M_N(t) - \frac{1}{2}\langle \pm N^2 L M_N\rangle(t)\big)$ and $t \mapsto \exp\big(\pm 4 N^2 L M_N(t) - \frac{1}{2}\langle \pm 4 N^2 L M_N\rangle(t)\big)$ are martingales thanks to \cite[Corollary 3.5.13]{KaratzasShreve:Book}, Doob's maximal inequality with `$p=2$' (see e.g., \cite[Theorem 1.3.8(iv)]{KaratzasShreve:Book} with the help of Jensen's inequality) shows that 
\begin{align*}
&\mathbb{E}\Big[\sup_{0 \leq t \leq \delta}\exp\Big(\pm N^2 L M_N(t) - \frac{1}{2}\langle \pm N^2 L M_N\rangle(t)\Big)^2\Big] \\
&\leq 
2\,\mathbb{E}\Big[\exp\Big(\pm N^2 L M_N(\delta) - \frac{1}{2}\langle \pm N^2 L M_N\rangle(\delta)\Big)^2\Big] \\
&= 
2\,\mathbb{E}\Big[\exp\Big(\pm 2 N^2 L M_N(\delta) - 4\langle \pm N^2 L M_N\rangle(\delta) + 3\langle \pm N^2 L M_N\rangle(\delta)\Big)\Big] \\
&\leq 
2\,\mathbb{E}\Big[\exp\Big(\pm 2 N^2 L M_N(\delta) - 4\langle \pm N^2 L M_N\rangle(\delta)\Big)^2\Big]^{1/2} 
\mathbb{E}\Big[\exp\Big(3 \langle \pm N^2 L M_N\rangle(\delta)\Big)^2\Big]^{1/2} \\
&= 
2\,\mathbb{E}\Big[\exp\Big(\pm 4 N^2 L M_N(\delta) - \frac{1}{2}\langle \pm 4 N^2 L M_N\rangle(\delta)\Big)\Big]^{1/2} 
\mathbb{E}\Big[\exp\Big(6 \langle \pm N^2 L M_N\rangle(\delta)\Big)\Big]^{1/2} \\
&=  
2\,\mathbb{E}\Big[\exp\Big(6 N^4 L^2\langle M_N\rangle(\delta)\Big)\Big]^{1/2}. 
\end{align*}
Therefore, we have 
\begin{align*}
\mathbb{E}\Big[&\sup_{0 \leq t \leq \delta}\exp(\pm N^2 L M_N(t))\Big]\\
&\leq 
\sqrt{2}\,\mathbb{E}\Big[\exp\Big(6 N^4 L^2 \langle M_N\rangle(\delta)\Big)\Big]^{1/4}\mathbb{E}\Big[\exp\Big(N^4 L^2 \langle M_N\rangle(\delta)\Big)\Big]^{1/2} 
\leq 
\sqrt{2}\,\mathrm{e}^{8 N^2 L^2 \delta}.  
\end{align*}
Hence we get
\begin{align*}
\mathbb{P}\Big(\sup_{0\leq t\leq\delta}Z_N(t) \geq \varepsilon^2\Big) 
&\leq
\mathrm{e}^{-N^2 L (\varepsilon^2-\delta)} \times 2 \times \sqrt{2}\,\mathrm{e}^{8N^2 L^2 \delta} 
=
2\sqrt{2}\,\mathrm{e}^{-N^2 L (\varepsilon^2 - (8L+1)\delta)}
\end{align*}
for every $L > 0$. 
\end{proof}  

\begin{corollary}\label{C5.2} The sequence of probability measures $\mathbb{P}(\tau_{\Xi^\mathrm{lib}(N)} \in\,\cdot\,)$ on $TS^c\big(C^*_R\big\langle x_{\bullet\diamond}(\cdot)\big\rangle\big)$ is exponentially tight. 
\end{corollary}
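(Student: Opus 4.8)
The goal is to show that for every prescribed level $M>0$ there is a compact subset of $TS^c\big(C^*_R\big\langle x_{\bullet\diamond}(\cdot)\big\rangle\big)$ whose complement has $\mathbb{P}(\tau_{\Xi^\mathrm{lib}(N)}\in\,\cdot\,)$-measure at most $\mathrm{e}^{-N^2 M}$ up to an $N$-independent constant. The plan is to use the compact sets $\Gamma_{(\delta_k)}$ furnished by Lemma \ref{L2.2}(2); the freedom in the sequence $(\delta_k)$ is exactly what will let us tune the rate.

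First I would rewrite the event $\{\tau_{\Xi^\mathrm{lib}(N)}\notin\Gamma_{(\delta_k)}\}$ in terms of the unitary Brownian motions. For $\tau=\tau_{\Xi^\mathrm{lib}(N)}$ we have $\tau\big((x_{ij}(s)-x_{ij}(t))^2\big)^{1/2}=\big\Vert\xi^\mathrm{lib}_{ij}(N)(s)-\xi^\mathrm{lib}_{ij}(N)(t)\big\Vert_{\mathrm{tr}_N,2}$, which is $0$ when $i=n+1$, while for $1\le i\le n$ the identity $U_N^{(i)}(s)\xi U_N^{(i)}(s)^*-U_N^{(i)}(t)\xi U_N^{(i)}(t)^*=(U_N^{(i)}(s)-U_N^{(i)}(t))\xi U_N^{(i)}(s)^*+U_N^{(i)}(t)\xi(U_N^{(i)}(s)^*-U_N^{(i)}(t)^*)$ together with submultiplicativity of $\Vert\,\cdot\,\Vert_{\mathrm{tr}_N,2}$ against the operator norm and $\Vert\xi_{ij}(N)\Vert\le R$ gives $\big\Vert\xi^\mathrm{lib}_{ij}(N)(s)-\xi^\mathrm{lib}_{ij}(N)(t)\big\Vert_{\mathrm{tr}_N,2}\le 2R\,\Vert U_N^{(i)}(s)-U_N^{(i)}(t)\Vert_{\mathrm{tr}_N,2}$. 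Hence
\[
\big\{\tau_{\Xi^\mathrm{lib}(N)}\notin\Gamma_{(\delta_k)}\big\}\subseteq\bigcup_{k\ge1}\bigcup_{i=1}^n\Big\{\sup_{\substack{0\le s,t\le k\\|s-t|\le\delta_k}}\Vert U_N^{(i)}(s)-U_N^{(i)}(t)\Vert_{\mathrm{tr}_N,2}>\tfrac{1}{2Rk}\Big\}.
\]
Since Proposition \ref{P5.1} bounds $\sup_{s\le t\le s+\delta}$ for a fixed left endpoint, I would cover $[0,k]$ by the $\lceil k/\delta_k\rceil+1$ grid points $j\delta_k$ and observe that any $s,t\in[0,k]$ with $|s-t|\le\delta_k$ satisfy, for the appropriate $j$, $\Vert U_N^{(i)}(s)-U_N^{(i)}(t)\Vert_{\mathrm{tr}_N,2}\le 2\sup_{j\delta_k\le w\le j\delta_k+2\delta_k}\Vert U_N^{(i)}(j\delta_k)-U_N^{(i)}(w)\Vert_{\mathrm{tr}_N,2}$. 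Applying Proposition \ref{P5.1} with $\varepsilon=\tfrac{1}{4Rk}$, window length $2\delta_k$, and a free parameter $L=L_k$ (the bound being the same for every $j$ by the left-increment property) yields
\[
\mathbb{P}\big(\tau_{\Xi^\mathrm{lib}(N)}\notin\Gamma_{(\delta_k)}\big)\le\sum_{k\ge1}A_k\,\mathrm{e}^{-N^2 c_k},\qquad A_k:=2\sqrt2\,n\big(\tfrac{k}{\delta_k}+2\big),\quad c_k:=L_k\Big(\tfrac{1}{16R^2k^2}-2(8L_k+1)\delta_k\Big),
\]
with $A_k$ independent of $N$.

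Finally I would choose the parameters. Given $M$, put $L_k:=32R^2k^2(M+k)$ and then take $\delta_k>0$ small enough that $2(8L_k+1)\delta_k\le\tfrac{1}{32R^2k^2}$; this forces $c_k\ge\tfrac{L_k}{32R^2k^2}=M+k$, while $A_k=2\sqrt2\,n(k/\delta_k+2)$ then grows only polynomially in $k$, so $\sum_{k\ge1}A_k\,\mathrm{e}^{-k}<\infty$. Consequently, for $N\ge1$,
\[
\mathbb{P}\big(\tau_{\Xi^\mathrm{lib}(N)}\notin\Gamma_{(\delta_k)}\big)\le\mathrm{e}^{-N^2 M}\sum_{k\ge1}A_k\,\mathrm{e}^{-N^2 k}\le\Big(\sum_{k\ge1}A_k\,\mathrm{e}^{-k}\Big)\,\mathrm{e}^{-N^2 M},
\]
so that $\varlimsup_{N\to\infty}\tfrac1{N^2}\log\mathbb{P}\big(\tau_{\Xi^\mathrm{lib}(N)}\notin\Gamma_{(\delta_k)}\big)\le-M$. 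As $\Gamma_{(\delta_k)}$ is compact by Lemma \ref{L2.2}(2) and $M>0$ was arbitrary, this is precisely exponential tightness. The one point requiring care is the bookkeeping in these last two steps: one must keep the covering number $k/\delta_k$ (hence $A_k$) from growing too fast in $k$ while simultaneously forcing the rate $c_k$ of each summand past $M+k$, which is why $L_k$ is fixed first and $\delta_k$ chosen afterwards.
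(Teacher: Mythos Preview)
Your proof is correct and follows essentially the same approach as the paper: reduce to increments of the unitary Brownian motions via $\Vert\xi^\mathrm{lib}_{ij}(N)(s)-\xi^\mathrm{lib}_{ij}(N)(t)\Vert_{\mathrm{tr}_N,2}\le 2R\Vert U_N^{(i)}(s)-U_N^{(i)}(t)\Vert_{\mathrm{tr}_N,2}$, cover $[0,k]$ by a $\delta_k$-grid, apply Proposition~\ref{P5.1} at each grid point, and then choose $L_k$ followed by $\delta_k$ to force the rate past $M+k$ while keeping the covering prefactor polynomial. The only differences are cosmetic bookkeeping in the parameter choices (the paper takes $L=32R^2k^3C$ and an explicit $\delta_k$, you take $L_k=32R^2k^2(M+k)$ and then $\delta_k$ small enough), and both lead to the same conclusion via Lemma~\ref{L2.2}(2).
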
 
\begin{proof} Observe that 
\begin{align*}
&\sup_{\substack{0 \leq s,t \leq k \\ |s-t| \leq \delta}} \max_{\substack{1 \leq j \leq r(i) \\ 1 \leq i \leq n+1}} \tau_{\Xi^\mathrm{lib}(N)}\big((x_{ij}(s) - x_{ij}(t))^2\big)^{1/2} \\
&\leq 
\max_{\substack{0 \leq \ell \leq [k/\delta] \\ 1 \leq j \leq r(i) \\ 1 \leq i \leq n}} \sup_{\substack{\ell\delta \leq s \leq (\ell+1)\delta \\ s \leq t \leq s+\delta}} \tau_{\Xi^\mathrm{lib}(N)}\big((x_{ij}(s) - x_{ij}(t))^2\big)^{1/2}  \\
&\leq 
\max_{\substack{0 \leq \ell \leq [k/\delta] \\ 1 \leq j \leq r(i) \\ 1 \leq i \leq n}} \sup_{\substack{\ell\delta \leq s \leq (\ell+1)\delta \\ s \leq t \leq s+\delta}} \Big(\tau_{\Xi^\mathrm{lib}(N)}\big((x_{ij}(s) - x_{ij}(\ell\delta))^2\big)^{1/2} + \tau_{\Xi^\mathrm{lib}(N)}\big((x_{ij}(\ell\delta) - x_{ij}(t))^2\big)^{1/2} \Big) \\
&\leq 
2\max_{\substack{0 \leq \ell \leq [k/\delta] \\ 1 \leq j \leq r(i) \\ 1 \leq i \leq n}} \sup_{\ell\delta \leq t \leq (\ell+2)\delta} \tau_{\Xi^\mathrm{lib}(N)}\big((x_{ij}(\ell\delta) - x_{ij}(t))^2\big)^{1/2} \\
&\leq 
4R\max_{\substack{0 \leq \ell \leq [k/\delta] \\ 1 \leq i \leq n}} \sup_{\ell\delta \leq t \leq (\ell+2)\delta} \big\Vert U_N^{(i)}(\ell\delta) - U_N^{(i)}(t)\big\Vert_{\mathrm{tr}_N,2},
\end{align*}
where $\ell$ is a parameter of non-negative integers and $[k/\delta]$ denotes the greatest non-negative integer that is not greater than $k/\delta$. 
Hence, for each $k \in \mathbb{N}$ and for any $\delta > 0$ and $L >0$, we have 
\begin{align*} 
&\mathbb{P}\Big(\sup_{\substack{0 \leq s,t \leq k \\ |s-t| \leq \delta}} \max_{\substack{1 \leq j \leq r(i) \\ 1 \leq i \leq n+1}} \tau_{\Xi^\mathrm{lib}(N)}\big((x_{ij}(s) - x_{ij}(t))^2\big)^{1/2} > 1/k \Big) \\
&\leq 
\mathbb{P}\Big(\max_{\substack{0 \leq \ell \leq [k/\delta] \\ 1 \leq i \leq n}} \sup_{\ell\delta \leq t \leq (\ell+2)\delta} \big\Vert U_N^{(i)}(\ell\delta) - U_N^{(i)}(t)\big\Vert_{\mathrm{tr}_N,2} \geq \frac{1}{4Rk}\Big) \\
&\leq 
\sum_{\ell=0}^{[k/\delta]} \sum_{i=1}^n \mathbb{P}\Big(\sup_{\ell\delta \leq t \leq (\ell+2)\delta} \big\Vert U_N^{(i)}(\ell\delta) - U_N^{(i)}(t)\big\Vert_{\mathrm{tr}_N,2} \geq \frac{1}{4Rk}\Big) \\
&\leq 
2\sqrt{2}\,n\,([k/\delta]+1)\,\mathrm{e}^{-N^2 L ((16 R^2 k^2)^{-1} - (8L + 1)2\delta)} 
\end{align*}
by Proposition \ref{P5.1}. Therefore, for a given $C > 0$, letting 
$$
L := 32R^2 k^3 C
\quad \text{and} \quad 
\delta_k := \frac{1}{64 R^2 k^2 (256 R^2 k^3 C + 1)},
$$ 
we obtain the following estimate: 
$$
\mathbb{P}\Big(\sup_{\substack{0 \leq s,t \leq k \\ |s-t| \leq \delta_k}} \max_{\substack{1 \leq j \leq r(i) \\ 1 \leq i \leq n+1}} \tau_{\Xi^\mathrm{lib}(N)}\big((x_{ij}(s) - x_{ij}(t))^2\big)^{1/2} > \frac{1}{k} \Big) 
\leq 
C'\,(k^6\,\mathrm{e}^{-N^2 C\,k/2})\,\mathrm{e}^{-N^2 C\,k/2}, 
$$
where $C'>0$ depends only on $n, R, C$ and is independent of $k, N$. If $C > 12$, then $k^6\,\mathrm{e}^{-N^2 C k/2} \leq e^{-N^2 C/2}$. With the sequence $(\delta_k)_{k\geq1}$ it follows that
\begin{align*}
&\mathbb{P}\big( \tau_{\Xi^\mathrm{lib}(N)} \not\in \Gamma_{(\delta_k)}\big) \\
&\leq \sum_{k=1}^\infty \mathbb{P}\Big(\sup_{\substack{0 \leq s,t \leq k \\ |s-t| \leq \delta_k}} \max_{\substack{1 \leq j \leq r(i) \\ 1 \leq i \leq n+1}} \tau_{\Xi^\mathrm{lib}(N)}\big((x_{ij}(s) - x_{ij}(t))^2\big)^{1/2} > \frac{1}{k} \Big) 
\leq C'\,\frac{\mathrm{e}^{-N^2 C}}{1 - \mathrm{e}^{-N^2 C/2}},  
\end{align*}
implying that $\varlimsup_{N\to\infty}\frac{1}{N^2}\log\mathbb{P}\big( \tau_{\Xi^\mathrm{lib}(N)} \not\in \Gamma_{(\delta_k)}\big)  \leq -C$ whenever $C > 12$. 
This together with Lemma \ref{L2.2}(2) shows the exponential tightness of the measures $\mathbb{P}(\tau_{\Xi^\mathrm{lib}(N)} \in\,\cdot\,)$, since $C > 0$ can arbitrarily be large. 
\end{proof}

\subsection{Rate function} We define a map $I^\mathrm{lib}_{\sigma_0} : TS^c\big(C^*_R\big\langle x_{\bullet\diamond}(\cdot)\big\rangle\big) \to [0,+\infty]$ to be  
\begin{equation}\label{Eq21}
\begin{aligned} 
\sup_{\substack{T \geq 0 \\ P = P^* \in \mathbb{C}\langle x_{\bullet\diamond}(\cdot)\rangle}} 
\Bigg\{ &\tau^T(P) - \sigma_0^\mathrm{lib}(P) 
- \frac{1}{2}\sum_{k=1}^n \int_0^T \big\Vert E_s^\tau\big((\mathfrak{D}_s^{(k)}P)(x_{\bullet\diamond}^{\tau^s}(\cdot),v_\bullet^\tau(\cdot))\big) \big\Vert_{\tau,2}^2\,\mathrm{d}s\Bigg\}.
\end{aligned}
\end{equation}
That the integrand is piece-wisely continuous in $s$ follows from Lemma \ref{L5.5} below together with \eqref{Eq22}: Note that $\mathrm{i}\,\mathfrak{D}_s^{(k)}P$ is self-adjoint if $P = P^*$, and then  
\begin{equation}\label{Eq22}
\begin{aligned}
\big\Vert &E_s^\tau\big((\mathfrak{D}_s^{(k)}P)(x_{\bullet\diamond}^{\tau^s}(\cdot),v_\bullet^\tau(\cdot))\big) \big\Vert_{\tau,2}^2 
= 
\big\Vert E_s^\tau\big((\mathrm{i}\,\mathfrak{D}_s^{(k)}P)(x_{\bullet\diamond}^{\tau^s}(\cdot),v_\bullet^\tau(\cdot))\big) \big\Vert_{\tau,2}^2 \\
&= 
-\tau\Big(E_s^\tau\big((\mathfrak{D}_s^{(k)}P)(v_\bullet^\tau((\cdot -s)\vee0)x_{\bullet\diamond}^\tau(\cdot\wedge s)v_\bullet^\tau((\cdot -s)\vee0)^*,v_\bullet^\tau(\cdot))\big)\big)^2\Big) 
\end{aligned}
\end{equation}
holds for every $P = P^* \in \mathbb{C}\big\langle x_{\bullet\diamond}(\cdot)\big\rangle$. 

\begin{lemma}\label{L5.3} If $I^\mathrm{lib}_{\sigma_0}(\tau) < +\infty$, then $\tau^0 = \sigma_0^\mathrm{lib}$, that is, $\pi_0^*(\tau) = \sigma_0$, and 
$$
I^\mathrm{lib}_{\sigma_0}(\tau) = \frac{1}{2} \sup_{\substack{T \geq 0 \\ P = P^* \in \mathbb{C}\langle x_{\bullet\diamond}(\cdot)\rangle}}  \left\{ 
\frac{(\tau^T(P) - \sigma_0^\mathrm{lib}(P))^2}{\sum_{k=1}^n \int_0^T \big\Vert E_s^\tau\big((\mathfrak{D}_s^{(k)}P)(x_{\bullet\diamond}^{\tau^s}(\cdot),v_\bullet^\tau(\cdot))\big) \big\Vert_{\tau,2}^2\,\mathrm{d}s}\right\} 
$$
holds (and the right-hand side is well-defined with convention $0/0=0$, that is, if the denominator is zero, then the numerator must be zero).  
\end{lemma}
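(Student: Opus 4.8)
The plan is to recognize the functional inside the supremum defining $I^\mathrm{lib}_{\sigma_0}(\tau)$ as ``affine minus a nonnegative quadratic'' in the variable $P$, and then to run the standard one-parameter scaling (Legendre-duality) argument. For self-adjoint $P \in \mathbb{C}\langle x_{\bullet\diamond}(\cdot)\rangle$ and $T \geq 0$ put
\[
a_{P,T} := \tau^T(P) - \sigma_0^\mathrm{lib}(P), \qquad
b_{P,T} := \sum_{k=1}^n \int_0^T \big\Vert E_s^\tau\big((\mathfrak{D}_s^{(k)}P)(x_{\bullet\diamond}^{\tau^s}(\cdot),v_\bullet^\tau(\cdot))\big) \big\Vert_{\tau,2}^2\,\mathrm{d}s \;\geq\; 0,
\]
so that $I^\mathrm{lib}_{\sigma_0}(\tau) = \sup_{T\geq 0,\ P=P^*}\big(a_{P,T} - \tfrac12 b_{P,T}\big)$. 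Since $\mathfrak{D}_s^{(k)} = \theta\circ\delta_s^{(k)}$ is a \emph{linear} map (the composition of the derivation $\delta_s^{(k)}$ with the linear $\theta$), one has $(\mathfrak{D}_s^{(k)}(\lambda P))(\,\cdot\,) = \lambda\,(\mathfrak{D}_s^{(k)}P)(\,\cdot\,)$ for every $\lambda\in\mathbb{R}$, whence $a_{\lambda P,T} = \lambda\,a_{P,T}$ and, using linearity of $E_s^\tau$, $b_{\lambda P,T} = \lambda^2\,b_{P,T}$. Moreover $b_{P,T} < +\infty$ for each $P,T$, because $\mathfrak{D}_s^{(k)}P$ is a polynomial evaluated on the norm-bounded family $x_{ij}^{\tau^s}(t)$ ($\Vert x_{ij}^{\tau^s}(t)\Vert \leq R$) and on the unitaries $v_i^\tau(t)$, so the integrand is bounded on $[0,T]$.

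First I would settle $\tau^0 = \sigma_0^\mathrm{lib}$. Taking $T = 0$ the integral term vanishes, so $\lambda\,a_{P,0} = a_{\lambda P,0} - \tfrac12 b_{\lambda P,0} \leq I^\mathrm{lib}_{\sigma_0}(\tau)$ for all $\lambda \in \mathbb{R}$ and all $P = P^*$; if $a_{P,0} = \tau^0(P) - \sigma_0^\mathrm{lib}(P)$ were nonzero for some such $P$, letting $\lambda \to +\infty$ (or $-\infty$) would force $I^\mathrm{lib}_{\sigma_0}(\tau) = +\infty$, contradicting the hypothesis. Hence $\tau^0$ and $\sigma_0^\mathrm{lib}$ agree on all self-adjoint elements of $\mathbb{C}\langle x_{\bullet\diamond}(\cdot)\rangle$, hence (by linearity) on the whole algebra, i.e.\ $\tau^0 = \sigma_0^\mathrm{lib}$. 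Unwinding the $s=0$ case of the construction of $\tau^s$ in subsection 4.1, one has $x_{ij}^{\tau^0}(0) = v_i^\tau(0)\,x_{ij}^\tau(0)\,v_i^\tau(0)^* = x_{ij}^\tau(0)$ for $i \leq n$ and $x_{n+1\,j}^{\tau^0}(0) = x_{n+1\,j}^\tau(0)$, so $\pi_0^*(\tau^0) = \pi_0^*(\tau)$; likewise $v_i(0) = 1$ gives $\pi_0^*(\sigma_0^\mathrm{lib}) = \sigma_0$. Therefore $\pi_0^*(\tau) = \sigma_0$.

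For the displayed identity, fix $T$ and $P = P^*$. Since $\lambda P$ is again a self-adjoint polynomial for every $\lambda \in \mathbb{R}$, each $\lambda\,a_{P,T} - \tfrac12\lambda^2\,b_{P,T} = a_{\lambda P,T} - \tfrac12 b_{\lambda P,T}$ is one of the quantities in the supremum defining $I^\mathrm{lib}_{\sigma_0}(\tau)$, while conversely the $\lambda = 1$ choice recovers $a_{P,T} - \tfrac12 b_{P,T}$; hence
\[
I^\mathrm{lib}_{\sigma_0}(\tau) \;=\; \sup_{\substack{T\geq 0 \\ P=P^*}}\ \sup_{\lambda\in\mathbb{R}}\Big(\lambda\,a_{P,T} - \tfrac12\lambda^2\,b_{P,T}\Big).
\]
The inner supremum equals $a_{P,T}^2/(2\,b_{P,T})$ (attained at $\lambda = a_{P,T}/b_{P,T}$) when $b_{P,T} > 0$, equals $0$ when $a_{P,T} = b_{P,T} = 0$, and equals $+\infty$ when $b_{P,T} = 0$ but $a_{P,T} \neq 0$. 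Because $I^\mathrm{lib}_{\sigma_0}(\tau) < +\infty$, the last case cannot occur for any $T,P$: $b_{P,T} = 0$ forces $a_{P,T} = 0$, which is precisely the well-definedness assertion (with $0/0 := 0$). Substituting the value of the inner supremum and factoring out $\tfrac12$ gives the claimed formula for $I^\mathrm{lib}_{\sigma_0}(\tau)$.

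There is no serious obstacle here; the argument is the standard Legendre-duality trick for rate functions of the form $\sup_P(\text{linear} - \tfrac12\,\text{quadratic})$. The only points needing a little care are the homogeneity of $\mathfrak{D}_s^{(k)}$ under scalar multiples (needed to scale $P$), the finiteness of $b_{P,T}$ (needed to make the arithmetic legitimate), and the bookkeeping of the three cases for the inner supremum together with the $0/0$ convention; the passage $\tau^0 = \sigma_0^\mathrm{lib} \Rightarrow \pi_0^*(\tau) = \sigma_0$ is then immediate from the $s=0$ specialization of the definition of $\tau^s$ and from $v_i(0) = 1$.
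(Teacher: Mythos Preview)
Your proof is correct and follows essentially the same approach as the paper's: both recognize that scaling $P \mapsto rP$ turns the expression inside the supremum into $r\,\alpha_T(P) - \tfrac{r^2}{2}\beta_T(P)$, use $T=0$ (where $\beta_0(P)=0$) together with finiteness to force $\tau^0 = \sigma_0^{\mathrm{lib}}$, and then optimize over the scaling parameter to obtain the quotient formula with the three-case analysis. Your write-up is slightly more explicit than the paper's (you spell out the homogeneity of $\mathfrak{D}_s^{(k)}$ and the passage from $\tau^0 = \sigma_0^{\mathrm{lib}}$ to $\pi_0^*(\tau) = \sigma_0$, whereas the paper closes with a short $\varepsilon$-argument), but the underlying Legendre-type trick is identical.
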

\begin{proof} For each fixed $P = P^* \in \mathbb{C}\big\langle x_{\bullet\diamond}(\cdot)\big\rangle$, let $\alpha_T(P) := \tau^T(P) - \sigma_0^\mathrm{lib}(P)$ and $\beta_T(P) := \sum_{k=1}^n \int_0^T \big\Vert E_s^\tau\big((\mathfrak{D}_s^{(k)}P)(x_{\bullet\diamond}^{\tau^s}(\cdot),v_\bullet^\tau(\cdot))\big) \big\Vert_{\tau,2}^2\,ds$, and consider the function 
$$
f_{P,T}(r) := \alpha_T(rP) - \frac{\beta_T(rP)}{2} = \alpha_T(P)\,r - \frac{\beta_T(P)}{2}\,r^2 = -\frac{\beta_T(P)}{2}\Big(r-\frac{\alpha_T(P)}{\beta_T(P)}\Big)^2 + \frac{\alpha_T(P)^2}{2\beta_T(P)}
$$ 
on the real line. If $\beta_T(P) \gneqq 0$, then $\max_r f_{P,T}(r) = f_{P,T}(\alpha_T(P)/\beta_T(P)) = \alpha_T(P)^2/2\beta_T(P)$; otherwise 
$$
\sup_r f_{P,T}(r) = \sup_r \alpha_T(P) r = \begin{cases} 0 & (\alpha_T(P) = 0), \\ +\infty & (\alpha_T(P) \neq 0). \end{cases}
$$

Trivially $\beta_0(P) = 0$ always holds, and hence the above discussion shows that $\alpha_0(P)$ must be $0$ for every $P$, since $I^\mathrm{lib}_{\sigma_0}(\tau) < +\infty$. Therefore, we have proved the former assertion $\tau^0 = \sigma_0^\mathrm{lib}$. 

For any $\varepsilon > 0$, there exist $P_\varepsilon = P_\varepsilon^* \in \mathbb{C}\big\langle x_{\bullet\diamond}(\cdot)\big\rangle$ and $T_\varepsilon \geq 0$ so that $I^\mathrm{lib}_{\sigma_0}(\tau) - \varepsilon < f_{P_\varepsilon,T_\varepsilon}(1) \leq \max_r f_{P_\varepsilon,T_\varepsilon}(r) \leq I^\mathrm{lib}_{\sigma_0}(\tau) < +\infty$. Then, the first paragraph shows that 
$$
I^\mathrm{lib}_{\sigma_0}(\tau) - \varepsilon < \frac{\alpha_{T_\varepsilon}(P_\varepsilon)^2}{2\beta_{T_\varepsilon}(P_\varepsilon)} \leq \sup_{P,T} \frac{\alpha_T(P)^2}{2\beta_T(P)} = \sup_{P,T}\max_r f_{P,T}(r) \leq I^\mathrm{lib}_{\sigma_0}(\tau)
$$ 
with convention $0/0 = 0$. Hence the latter assertion holds. 
\end{proof}   

Here is a simple lemma. 

\begin{lemma}\label{L5.4} Let $(\mathcal{M},\tau)$ be a tracial $W^*$-probability space with $\tau$ faithful, and $u \in \mathcal{M}$ be a unitary, and $\mathcal{N}$ be a (unital) $W^*$-subalgebra of $\mathcal{M}$. Let $E : \mathcal{M} \to \mathcal{N}$ be the unique $\tau$-preserving conditional expectation. If $u$ is $*$-freely independent of $\mathcal{N}$ we have $E(uxu^*) = \tau(x)1 + |\tau(u)|^2 x^\circ$ for every $x \in \mathcal{N}$ with $x^\circ := x - \tau(x)1$. 
\end{lemma}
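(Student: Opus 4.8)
The plan is to reduce the identity to a computation with free cumulants / the known formula for conditional expectations onto a free subalgebra, exactly along the lines of Lemma~\ref{L4.4}, but it is cleaner here to argue directly via the moment characterization of $E$. Since $\tau$ is faithful and $\mathcal{N}$ is a von Neumann subalgebra, $E(uxu^*)$ is the unique element $y\in\mathcal{N}$ with $\tau(y n) = \tau(uxu^* n)$ for all $n\in\mathcal{N}$. So first I would fix $x,n\in\mathcal{N}$ and aim to show $\tau(uxu^* n) = \tau(x)\tau(n) + |\tau(u)|^2\,\tau(x^\circ n)$, which by linearity and $\tau(x^\circ n)=\tau(xn)-\tau(x)\tau(n)$ is equivalent to
\begin{equation*}
\tau(uxu^* n) = |\tau(u)|^2\,\tau(xn) + \bigl(1-|\tau(u)|^2\bigr)\,\tau(x)\tau(n).
\end{equation*}
Because everything is continuous in $\|\cdot\|_2$ and $\mathcal{N}$ is generated as a von Neumann algebra by its elements, it suffices to prove this for $x,n$ in a weakly dense $*$-subalgebra, i.e.\ for arbitrary elements of $\mathcal{N}$; no density reduction is actually needed once we work with general $x,n\in\mathcal{N}$.

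Next I would compute $\tau(uxu^* n)$ using freeness of $\{u,u^*\}$ from $\mathcal{N}$. Write $x = \tau(x)1 + x^\circ$ and $n = \tau(n)1 + n^\circ$ with $x^\circ,n^\circ$ centered. Expanding,
\begin{equation*}
\tau(uxu^*n) = \tau(x)\tau(n)\,\tau(uu^*) + \tau(x)\,\tau(uu^* n^\circ) + \tau(n)\,\tau(ux^\circ u^*) + \tau(ux^\circ u^* n^\circ).
\end{equation*}
Here $\tau(uu^*)=1$; $\tau(uu^*n^\circ)=\tau(n^\circ)=0$; and $\tau(ux^\circ u^*) = \tau(x^\circ u^* u) = \tau(x^\circ)=0$ by the trace property. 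For the last term, $u x^\circ u^* n^\circ$ is an alternating product $u\cdot x^\circ\cdot u^*\cdot n^\circ$ with $u,u^*\in W^*(u)$ and $x^\circ,n^\circ\in\mathcal{N}$ all centered \emph{except} possibly $u$ and $u^*$; so I would instead write $u = \tau(u)1 + u^\circ$ and $u^* = \overline{\tau(u)}1 + (u^*)^\circ$ and expand $\tau(ux^\circ u^* n^\circ)$ into four terms. Three of these contain a genuinely alternating centered word (the terms with $u^\circ$ and $(u^*)^\circ$, or with $u^\circ$ alone adjacent to centered $\mathcal{N}$-elements, etc.) and vanish by the defining property of freeness; the only surviving term is $\tau(u)\overline{\tau(u)}\,\tau(x^\circ n^\circ) = |\tau(u)|^2\,\tau(x^\circ n^\circ)$. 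Collecting, $\tau(uxu^*n) = \tau(x)\tau(n) + |\tau(u)|^2\,\tau(x^\circ n^\circ)$, and since $\tau(x^\circ n^\circ) = \tau(x^\circ n)$ (as $\tau(x^\circ)=0$) this is exactly the claimed identity. Finally, comparing with the right-hand side, the element $y := \tau(x)1 + |\tau(u)|^2 x^\circ$ lies in $\mathcal{N}$ and satisfies $\tau(yn)=\tau(uxu^*n)$ for all $n\in\mathcal{N}$, whence $E(uxu^*)=y$ by uniqueness of the conditional expectation.

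The only slightly delicate point is the bookkeeping in the expansion of $\tau(ux^\circ u^* n^\circ)$: one must check that after centering $u$ and $u^*$ every term other than the $|\tau(u)|^2$ one is (a scalar multiple of) the trace of a reduced alternating word in the two free subalgebras $W^*(u)$ and $\mathcal{N}$, hence zero. This is routine but is the step where a careless grouping could go wrong — e.g.\ a term like $\tau(u)\,\tau\bigl((u^*)^\circ\cdots\bigr)$ needs $x^\circ$ or $n^\circ$ on \emph{both} sides of $(u^*)^\circ$ to be alternating, and one should note that consecutive centered $\mathcal{N}$-elements can be merged (their product minus its trace is again in $\mathcal{N}$) so the word stays reduced; I expect this to be the main (minor) obstacle and would spell it out carefully. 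Everything else is a direct application of the trace property and the definition of free independence.
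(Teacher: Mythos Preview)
Your proof is correct and follows exactly the same approach as the paper: characterize $E(uxu^*)$ via the relation $\tau(E(uxu^*)\,n)=\tau(uxu^*\,n)$ for all $n\in\mathcal{N}$, and compute the right-hand side using $*$-free independence to obtain $\tau(x)\tau(n)+|\tau(u)|^2\tau(x^\circ n)$. The paper simply asserts this identity ``by the $*$-free independence between $u$ and $\mathcal{N}$'' without writing out the centering-and-expanding step you carefully spell out, so your version is just a more detailed rendering of the same argument.
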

\begin{proof} For every $y \in \mathcal{N}$, we have $\tau(uxu^* y) = \tau(x)\tau(y) + |\tau(u)|^2\tau(x^\circ y)$ by the $*$-free independence between $u$ and $\mathcal{N}$. Since $E(uxu^*) \in \mathcal{N}$ is uniquely determined by the relation $\tau(uxu^* y) = \tau(E(uxu^*)y)$ for every $y \in \mathcal{N}$, the desired assertion immediately follows.  
\end{proof} 

The same idea as above shows the next lemma. 

\begin{lemma}\label{L5.5} Let $(\mathcal{M},\tau)$ be a tracial $W^*$-probability space with $\tau$ faithful. Let $\mathcal{L}$ and $\mathcal{N}$ be freely independent (unital) $W^*$-subalgebras of $\mathcal{M}$, and $E : \mathcal{M} \to \mathcal{N}$ be the unique $\tau$-preserving conditional expectation. Then $((a_1,\dots,a_{n-1},a_n),(b_1,\dots,b_{n-1})) \in \mathcal{L}^n \times \mathcal{N}^{n-1} \mapsto  E(a_1 b_1 \cdots a_{n-1} b_{n-1} a_n) \in \mathcal{N}$ is written as a universal polynomial in moments of the $a_i$, moments of the $b_i$ and words in the $b_i$. 
\end{lemma}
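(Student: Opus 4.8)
The plan is to mimic the proof of Lemma \ref{L5.4}: since $\tau$ is faithful, the conditional expectation $E := E_{\mathcal{N}}$ is characterised by the property that $E(X)$ is the unique $Y \in \mathcal{N}$ with $\tau(Xc) = \tau(Yc)$ for all $c \in \mathcal{N}$. Writing $X := a_1 b_1 a_2 b_2 \cdots a_{n-1} b_{n-1} a_n$, it therefore suffices to exhibit an element $B = B(b_1,\dots,b_{n-1}) \in \mathcal{N}$ which is a noncommutative polynomial in the $b_i$ whose coefficients are universal (depending only on $n$) polynomials in the $\tau$-moments of the $a_i$ and in the $\tau$-moments of words in the $b_i$, such that $\tau(Xc) = \tau(Bc)$ for every $c \in \mathcal{N}$; then $E(X) = B$ and we are done.

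To handle $\tau(Xc) = \tau(a_1 b_1 \cdots a_n c)$ I would prove, by induction on the word length $\ell$, the slightly more flexible statement: for \emph{any} word $W = x_1 \cdots x_\ell$ with each $x_j \in \mathcal{L} \cup \mathcal{N}$, the element $E(W)$ is a noncommutative polynomial in the $\mathcal{N}$-letters of $W$ with coefficients that are universal (given the colour pattern of $W$) polynomials in the $\tau$-moments of the $\mathcal{L}$-letters and the $\tau$-moments of words in the $\mathcal{N}$-letters. The reduction rules are: (i) if two consecutive letters $x_j, x_{j+1}$ lie in the same subalgebra, merge them into a single letter of that subalgebra — this shortens $W$, and moments of the merged family are polynomials in moments of the original one (and words in the merged $\mathcal{N}$-letters expand into words in the original $\mathcal{N}$-letters); (ii) if $W$ is strictly alternating and begins or ends with an $\mathcal{N}$-letter $b$, peel it off using $\mathcal{N}$-bimodularity, $E(bZ) = bE(Z)$ and $E(Zb) = E(Z)b$; (iii) if $W$ is strictly alternating and begins and ends with an $\mathcal{L}$-letter — which is the case of $X$ itself — center the leftmost not-yet-centered letter, $x_{j+1} = \tau(x_{j+1})1 + x_{j+1}^\circ$: in the term carrying the scalar $\tau(x_{j+1})$ the centered prefix becomes adjacent to $x_{j+2}$, which has the same colour as $x_j$, so rule (i) applies and drops the length by two, while the other term has one more centered factor. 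Iterating (iii) leaves, after at most $\ell$ steps, only the word in which every letter is centered.

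The terminal case is a strictly alternating word $x_1^\circ x_2^\circ \cdots x_\ell^\circ$ in which every letter is centered and $x_1^\circ, x_\ell^\circ \in \mathcal{L}$: for $c \in \mathcal{N}$, centering $c$ gives $\tau(x_1^\circ \cdots x_\ell^\circ c) = \tau(c)\,\tau(x_1^\circ \cdots x_\ell^\circ) + \tau(x_1^\circ \cdots x_\ell^\circ c^\circ)$, and both traces on the right vanish, since a product of centered elements drawn alternately from $\mathcal{L}$ and $\mathcal{N}$ has trace $0$ by freeness; hence $E(x_1^\circ \cdots x_\ell^\circ) = 0$. The words of length $0$ or $1$ are immediate: $E(1) = 1$, $E(b) = b$ for $b \in \mathcal{N}$, and $E(a) = \tau(a)1$ for $a \in \mathcal{L}$ (again by the centering identity). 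Feeding all of this back into the moment identity, every surviving term of $\tau(Xc)$ is a scalar — a product of $\tau$-moments of the $a_i$ — times a $\tau$-moment of a word in the $b_i$ and $c$ in which $c$ occurs exactly once; by traciality such a factor equals $\tau(\widetilde{v}\,c)$ for a word $\widetilde{v}$ in the $b_i$ alone, so collecting terms yields $\tau(Xc) = \tau(Bc)$ with $B \in \mathcal{N}$ of the asserted shape and independent of $c$.

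The only genuine obstacle is the bookkeeping: one must phrase the induction over \emph{arbitrary} words so that the merge operation of rule (i) never leaves the class under consideration, and one must verify that every centering/merging step converts $\tau$-moments of the $a_i$ and of words in the $b_i$ into polynomials in those same quantities, so that the final coefficients really do have the claimed universal form. The analytic ingredients (faithfulness of $\tau$, $\mathcal{N}$-bimodularity of $E$, vanishing of traces of centered alternating words) are standard and are precisely "the same idea as above". A slicker but less self-contained alternative would be to expand $\tau(a_1 b_1 \cdots a_n c)$ over non-crossing partitions, discard those with a mixed block, and apply Möbius inversion to the unique block containing $c$.
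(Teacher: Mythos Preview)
Your argument is correct and follows the same overall strategy as the paper: compute $\tau(a_1 b_1 \cdots a_n\,c)$ for arbitrary $c \in \mathcal{N}$, observe that it is a universal polynomial in the separate moments in which $c$ occurs exactly once, and then read off $E(a_1 b_1 \cdots a_n)$ by duality, exactly as in Lemma~\ref{L5.4}. The only difference is in how the polynomial structure of the mixed moment is obtained: the paper simply invokes \cite[Proposition 11.4, Theorem 11.16]{NicaSpeicher:Book} (the moment--cumulant formula together with the vanishing of mixed free cumulants) to conclude at once that $\tau(a_1 b_1 \cdots a_n b_n)$ is a universal polynomial in moments of the $a_i$ and moments of the $b_i$, and then uses multilinearity to see that $b_n$ appears exactly once in each term. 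Your centering/merging induction re-derives this fact by hand, which makes your proof longer but self-contained; the ``slicker alternative'' you mention at the end is precisely the paper's route.

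One small slip to correct: in your final paragraph you write that each surviving term is ``a scalar --- a product of $\tau$-moments of the $a_i$ --- times a $\tau$-moment of a word in the $b_i$ and $c$''. In fact the scalar coefficients will in general also contain $\tau$-moments of words in the $b_i$ (your own inductive hypothesis says so, and the lemma statement requires it): already for $E(a_1 b_1 a_2)$ one finds $E(a_1 b_1 a_2) = \tau(a_1)\tau(a_2)\,b_1 + \tau(b_1)\bigl(\tau(a_1 a_2)-\tau(a_1)\tau(a_2)\bigr)\,1$, where the coefficient of $1$ involves $\tau(b_1)$. This does not affect the argument, since your rule (iii) produces exactly such $\tau(b_i)$ factors when centering an $\mathcal{N}$-letter; just restate the conclusion accordingly.
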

\begin{proof} Let us calculate the map
$$
((a_1,\dots,a_{n-1},a_n),(b_1,\dots,b_{n-1},b_n)) \in \mathcal{L}^n \times \mathcal{N}^n \mapsto  \tau(a_1 b_1 \cdots a_{n-1} b_{n-1} a_n b_n). 
$$
By \cite[Proposition 11.4, Theorem 11.16]{NicaSpeicher:Book} $\tau(a_1 b_1 \cdots a_{n-1} b_{n-1} a_n b_n)$ is a universal polynomial in moments of the $a_i$ and moments of the $b_i$. Since the map 
$$
((a_1,\dots,a_{n-1},a_n),(b_1,\dots,b_{n-1},b_n)) \mapsto  \tau(a_1 b_1 \cdots a_{n-1} b_{n-1} a_n b_n)
$$ 
is multilinear, each term of the polynomial includes some joint moments of the $b_i$, where $b_n$ appears only once in a unique joint moment. Then we can obtain the desired assertion in the same way as in the proof of Lemma \ref{L5.4}. 
\end{proof}

We remark that the universal polynomial whose existence we have established admits an explicit formula based on the notation in \cite[Lecture 11]{NicaSpeicher:Book}. 

\medskip
Here is a main result of this subsection. 

\begin{proposition}\label{P5.6} $I^\mathrm{lib}_{\sigma_0} : TS^c\big(C^*_R\big\langle x_{\bullet\diamond}(\cdot)\big\rangle\big) \to [0,+\infty]$ is a good rate function. 
\end{proposition}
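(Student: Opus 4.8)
The plan is to verify that $I^\mathrm{lib}_{\sigma_0}$ satisfies the two defining properties of a good rate function: (i) it has closed (equivalently, lower-semicontinuous) sublevel sets $\{I^\mathrm{lib}_{\sigma_0} \leq c\}$ for every $c \geq 0$, and (ii) those sublevel sets are compact in the metric space $\big(TS^c\big(C^*_R\big\langle x_{\bullet\diamond}(\cdot)\big\rangle\big), d\big)$. Lower semicontinuity will come for free from the structure of the definition \eqref{Eq21}: $I^\mathrm{lib}_{\sigma_0}$ is a supremum over $(T,P)$ of the functions $\tau \mapsto \tau^T(P) - \sigma_0^\mathrm{lib}(P) - \frac12 \sum_k \int_0^T \Vert E_s^\tau((\mathfrak{D}_s^{(k)}P)(\cdots))\Vert_{\tau,2}^2\,\mathrm{d}s$, so the first step is to check that each of these is continuous (or at least lower semicontinuous) in $\tau$ with respect to $d$; then a supremum of lower-semicontinuous functions is lower-semicontinuous, and closedness of sublevel sets follows. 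For the continuity of the individual functionals, I would argue that $\tau \mapsto \tau^T(P)$ and $\tau \mapsto \tau(E_s^\tau((\mathfrak{D}_s^{(k)}P)(\cdots))^2)$ depend on $\tau$ only through finitely many of the numbers $\tau(w(t_1,\dots,t_\ell))$ — this is exactly the content of Lemma \ref{L5.5} (and Lemma \ref{L5.4}), which expresses the conditional expectation onto the ``old'' variables as a universal polynomial in moments of the $x_{ij}^\tau(t)$ with $t \leq s$ and words in them, together with the moments of the freely independent free unitary Brownian motions $v_i^\tau$, which are universal constants. Composing with the continuity of $\tau \mapsto \tau(w(\cdot))$ in $d$ and dominated convergence in the $s$-integral (the integrand is bounded because $P$ has bounded support in $[0,T]$ and all variables are norm-bounded) gives continuity of each functional, hence lower semicontinuity of $I^\mathrm{lib}_{\sigma_0}$.

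The compactness step is the substantive one, and the idea is to show that for each $c$ there is a sequence $(\delta_k)_{k\geq1}$ with $\{I^\mathrm{lib}_{\sigma_0} \leq c\} \subseteq \Gamma_{(\delta_k)}$, where $\Gamma_{(\delta_k)}$ is the compact set of Lemma \ref{L2.2}(2); since $\{I^\mathrm{lib}_{\sigma_0} \leq c\}$ is closed (by step (i)) and contained in a compact set, it is itself compact. To produce such a $(\delta_k)$ I would exploit the definition of $I^\mathrm{lib}_{\sigma_0}$ by plugging in well-chosen test polynomials $P$. The natural choice is $P = \zeta\,(x_{ij}(t) - x_{ij}(t'))^2$ or rather a linear test element $P = \zeta\,x_{ij}(t)$ for a real scalar $\zeta$: then $\tau^T(P) - \sigma_0^\mathrm{lib}(P)$ is linear in $\zeta$ while the quadratic-variation term $\frac12\sum_k\int_0^T \Vert E_s^\tau(\mathfrak{D}_s^{(k)}P)\Vert_{\tau,2}^2\,\mathrm{d}s$ is $\zeta^2$ times a bounded quantity (bounded by a constant depending only on $R$ and $T$, since $\mathfrak{D}_s^{(k)}x_{ij}(t)$ is a difference of two words $v_k(t-s)^* x_{kj}(t) v_k(t-s) \otimes$-type monomials composed through $\theta$, all norm $\leq R$). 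Optimizing over $\zeta$, as in the proof of Lemma \ref{L5.3}, forces a bound of the form $(\tau^T(x_{ij}(t)) - \sigma_0^\mathrm{lib}(x_{ij}(t)))^2 \leq 2c \cdot \mathrm{Const}(R,T)\cdot t$, which in particular (letting $t$ vary and using $\tau^T(x_{ij}(t)) = \tau(x_{ij}(t\wedge T))$) controls how far $\tau$ can be from $\sigma_0^\mathrm{lib}$ on single-variable moments. More usefully, testing against $P$ supported on two nearby times and using the derivation formula should give an estimate of $\tau((x_{ij}(s)-x_{ij}(t))^2)$ in terms of $c$ and $|s-t|$, uniformly over $\{I^\mathrm{lib}_{\sigma_0}\leq c\}$, which is exactly the modulus-of-continuity control defining $\Gamma_{(\delta_k)}$.

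I expect the main obstacle to be this last estimate: carefully choosing the family of test polynomials $P$ and bounding the quadratic-variation term from above so that the optimization yields a genuinely uniform (over the sublevel set) modulus of continuity $\tau\big((x_{ij}(s)-x_{ij}(t))^2\big)^{1/2} \leq \omega_c(|s-t|)$ with $\omega_c(h)\to 0$ as $h\to 0$. The subtlety is that $\mathfrak{D}_s^{(k)}$ introduces the free-unitary-Brownian-motion letters $v_k(t-s)$, and after applying $E_s^\tau$ one must still bound $\Vert E_s^\tau(\cdots)\Vert_{\tau,2}$ by something manifestly finite and $s$-integrable; Lemmas \ref{L5.4}–\ref{L5.5} plus the contractivity of conditional expectations ($\Vert E_s^\tau(a)\Vert_{\tau,2}\leq\Vert a\Vert_{\tau,2}\leq\Vert a\Vert$) give the needed a priori bound, but extracting the time-increment decay requires noting that $\mathbf{1}_{[0,t]}(s)$ restricts the relevant $s$-range and that for $P$ built from $x_{ij}(t)-x_{ij}(t')$ the derivation $\mathfrak{D}_s^{(k)}P$ vanishes for $s > \max(t,t')$, so the $\mathrm{d}s$-integral has length $O(|t-t'|)$ after the obvious cancellations — though one should double check whether a first-order (in $|t-t'|$) or only a square-root bound emerges, since only the latter is needed to land in $\Gamma_{(\delta_k)}$. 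Once the uniform modulus of continuity is in hand, choosing $\delta_k$ small enough that $\omega_c(2\delta_k) \leq 1/k$ completes the inclusion $\{I^\mathrm{lib}_{\sigma_0}\leq c\}\subseteq\Gamma_{(\delta_k)}$ and hence the proof.
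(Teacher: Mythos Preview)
Your plan is essentially the paper's own proof: lower semicontinuity via Lemma \ref{L5.5} (so each $I_{P,T}$ is continuous and the supremum is lower semicontinuous), then compactness of sublevel sets via the inclusion $\{I^\mathrm{lib}_{\sigma_0}\leq c\}\subset\Gamma_{(\delta_k)}$, obtained by testing with $P=(x_{ij}(t_1)-x_{ij}(t_2))^2$ and optimizing the scalar as in Lemma \ref{L5.3}. Two points deserve sharpening.

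First, your detour through the linear test $P=\zeta\,x_{ij}(t)$ is a dead end: a direct computation from Definition \ref{D3.1} gives $\mathfrak{D}_s^{(k)}x_{ij}(t)=0$ (the two terms produced by $\theta\circ\delta_s^{(k)}$ coincide), so this $P$ yields no information. The quadratic test is the right one.

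Second, and more importantly, what you call ``obvious cancellations'' is in fact the crux of the argument and is \emph{not} obvious. For $P=(x_{ij}(t_1)-x_{ij}(t_2))^2$ with $t_1<t_2$, the raw derivation $\mathfrak{D}_s^{(k)}P$ does \emph{not} vanish on $[0,t_1]$; only after applying $E_s^\tau$ does it become zero there, and this requires the explicit computation of Lemma \ref{L5.4}: for $s\leq t_1$ each commutator $[x_{ij}^\tau(s),\,E_s^\tau(v\cdots v^*)]$ collapses because $E_s^\tau(v\,x_{ij}^\tau(s)\,v^*)=\tau(x_{ij}^\tau(s))1+|\tau(v)|^2\,x_{ij}^\tau(s)^\circ$ lies in the commutant of $x_{ij}^\tau(s)$. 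This is exactly formula \eqref{Eq24} in the paper, and it is what forces the $\mathrm{d}s$-integral to have length $t_2-t_1$, yielding
\[
\tau\big((x_{ij}(t_1)-x_{ij}(t_2))^2\big)\ \leq\ \sigma_0^\mathrm{lib}\big((x_{ij}(t_1)-x_{ij}(t_2))^2\big)\ +\ 8R^2\sqrt{2c\,|t_1-t_2|}.
\]
Note also that you must separately check that the first term on the right tends to $0$ with $|t_1-t_2|$; this follows from the explicit construction of $\sigma_0^\mathrm{lib}$ via free unitary Brownian motion (a straightforward $\Vert v_i(t_1)-v_i(t_2)\Vert_{\tilde\sigma_0,2}$ estimate). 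With these two clarifications your outline becomes a complete proof, identical in strategy to the paper's.
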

\begin{proof} By \eqref{Eq22} together with Lemma \ref{L5.5} we observe that 
$$
\tau \mapsto \big\Vert E_s^\tau\big((\mathfrak{D}_s^{(k)}P)(x_{\bullet\diamond}^{\tau^s}(\cdot),v_\bullet^\tau(\cdot))\big) \big\Vert_{\tau,2}^2
$$
is a continuous function for every $s$. Hence 
$$
\tau \mapsto I_{P,T}(\tau) :=  \tau^T(P) - \sigma_0^\mathrm{lib}(P) 
- \frac{1}{2}\sum_{k=1}^n \int_0^T \big\Vert E_s^\tau\big((\mathfrak{D}_s^{(k)}P)(x_{\bullet\diamond}^{\tau^s}(\cdot),v_\bullet^\tau(\cdot))\big) \big\Vert_{\tau,2}^2\,\mathrm{d}s 
$$
is continuous, and consequently, $I^\mathrm{lib}_{\sigma_0}$ is lower semicontinuous. Therefore, it suffices to prove that the level set $\{I^\mathrm{lib}_{\sigma_0} \leq \lambda\}$ sits in a compact subset for every non-negative real number $\lambda \geq 0$. 

Assume that $I^\mathrm{lib}_{\sigma_0}(\tau) \leq \lambda$. By Lemma \ref{L5.3} we have 
\begin{equation}\label{Eq23}
\tau^T(P) \leq \sigma_0^\mathrm{lib}(P) + \sqrt{2\lambda\sum_{k=1}^n \int_0^T \big\Vert E_s^\tau\big((\mathfrak{D}_s^{(k)}P)(x_{\bullet\diamond}^{\tau^s}(\cdot),v_\bullet^\tau(\cdot))\big) \big\Vert_{\tau,2}^2\,\mathrm{d}s}
\end{equation}
for every $P = P^* \in \mathbb{C}\big\langle x_{\bullet\diamond}(\cdot)\big\rangle$ and $T \geq 0$.

For $0 \leq t_1 < t_2$ we have
\begin{align*}
\mathfrak{D}_s^{(k)}\big((x_{ij}(t_1) - x_{ij}(t_2))^2\big) &= 
2\delta_{k,i}\Big\{\mathbf{1}_{[0,t_1]}(s) v_i(t_1-s)^*[x_{ij}(t_1),x_{ij}(t_2)]v_i(t_1-s) \\
&\qquad\qquad\qquad+ \mathbf{1}_{[0,t_2]}(s)v_i(t_2-s)^*[x_{ij}(t_2),x_{ij}(t_1)]v_i(t_2-s)\Big\},
\end{align*}
and hence
\begin{align*}
&\big(\mathfrak{D}_s^{(k)}((x_{ij}(t_1) - x_{ij}(t_2))^2)\big)(x_{ij}^{\tau^s}(\cdot),v_i^\tau(\cdot)) \\
&= 
\begin{cases}
\begin{aligned} 
&2\delta_{k,i}\Big\{ [x_{ij}^\tau(s), v_i^\tau(t_1-s)^*v_i^\tau(t_2-s)x_{ij}^\tau(s)v_i^\tau(t_2-s)^* v_i^\tau(t_1-s)] \\
&\qquad\qquad 
+[x_{ij}^\tau(s),v_i^\tau(t_2-s)^*v_i^\tau(t_1-s)x_{ij}^\tau(s)v_i^\tau(t_1-s)^* v_i^\tau(t_2-s)]\Big\}
\end{aligned}
&(s \leq t_1), \\
2\delta_{k,i} [x_{ij}^\tau(s), v_i^\tau(t_2-s)^* x_{ij}^\tau(t_1) v_i^\tau(t_2-s)] & (t_1 < s \leq t_2), \\
0 & (t_2 < s). 
\end{cases}
\end{align*}
When $s \leq t_1$, Lemma \ref{L5.4} enables us to compute
\begin{align*}
&E_s^\tau\big(\big(\mathfrak{D}_s^{(k)}((x_{ij}(t_1) - x_{ij}(t_2))^2)\big)(x_{ij}^{\tau^s}(\cdot),v_i^\tau(\cdot))\big) \\
&= 
2\delta_{k,i}\Big\{
\big[x_{ij}^\tau(s),E_s^\tau\big(v_i^\tau(t_1-s)^*v_i^\tau(t_2-s)x_{ij}^\tau(s)v_i^\tau(t_2-s)^* v_i^\tau(t_1-s)\big)\big] \\
&\qquad\qquad+ 
\big[x_{ij}^\tau(s),E_s^\tau\big(v_i^\tau(t_2-s)^*v_i^\tau(t_1-s)x_{ij}^\tau(s)v_i^\tau(t_1-s)^* v_i^\tau(t_2-s)\big)\big] \Big\} \\
&= 
2\delta_{k,i}\Big\{
\big[x_{ij}^\tau(s),\big(\tau(x_{ij}^\tau(s))1 + \big|\tau(v_i^\tau(t_1-s)^*v_i^\tau(t_2-s))\big|^2 x_{ij}^\tau(s)^\circ\big)\big] \\
&\qquad\qquad+ 
\big[x_{ij}^\tau(s),\big(\tau(x_{ij}^\tau(s))1 + \big|\tau(v_i^\tau(t_2-s)^*v_i^\tau(t_1-s))\big|^2 x_{ij}^\tau(s)^\circ\big)\big] \Big\} \\
 &= 0. 
\end{align*}
In this way, we obtain the formula: 
\begin{equation}\label{Eq24}
\begin{aligned}
E_s^\tau\big(&\big(\mathfrak{D}_s^{(k)}((x_{ij}(t_1) - x_{ij}(t_2))^2)\big)(x_{ij}^{\tau^s}(\cdot),v_i^\tau(\cdot))\big) \\
&= 2\delta_{k,i} \mathbf{1}_{(t_1,t_2]}(s) |\tau(v_i^\tau(t_2 - s))|^2 [x_{ij}^\tau(s),x_{ij}^\tau(t_1)^\circ].
\end{aligned}
\end{equation}
Then, \eqref{Eq23} with $P := (x_{ij}(t_1) - x_{ij}(t_2))^2$ and $T$ large enough, and \eqref{Eq24} altogether show that 
$$
\tau((x_{ij}(t_1)-x_{ij}(t_2))^2) 
\leq 
\sigma_0^\mathrm{lib}((x_{ij}(t_1)-x_{ij}(t_2))^2) + 
8R^2 \sqrt{2\lambda|t_1-t_2|}.
$$
By the construction of $\sigma_0^\mathrm{lib}$ (see section 2), we see that $\sigma_0^\mathrm{lib}((x_{n+1\,j}(t_1)-x_{n+1\,j}(t_2))^2) = 0$ and moreover that, if $1 \leq i \leq n$, then 
\begin{align*}
\sigma_0^\mathrm{lib}((x_{ij}(t_1)-x_{ij}(t_2))^2) 
&= 
\Vert v_i(t_1) x_{ij}^{\sigma_0} v_i(t_1)^* - v_i(t_2) x_{ij}^{\sigma_0} v_i(t_2)^* \Vert_{\tilde{\sigma}_0,2}^2 \\
&\leq 
\big(2R \Vert v_i(t_1) - v_i(t_2) \Vert_{\tilde{\sigma}_0,2}\big)^2 \\
&= 
4R^2 \Vert v_i(|t_1 - t_2|) - 1 \Vert_{\tilde{\sigma}_0,2}^2 \to 0  
\end{align*}  
as $|t_1 - t_2| \to 0$. Hence, by Lemma \ref{L2.2}(2), $\{ I^\mathrm{lib}_{\sigma_0} \leq \lambda \}$ sits inside a compact subset. 
\end{proof}

We give a few important properties on the rate function $I^\mathrm{lib}_{\sigma_0}$. 

\begin{proposition}\label{P5.7} For any $\tau \in TS^c\big(C^*_R\big\langle x_{\bullet\diamond}(\cdot)\big\rangle\big)$ we have: 
\begin{itemize}
\item[(1)] $I^\mathrm{lib}_{\sigma_0}(\tau) < +\infty$ implies that $t \mapsto x_{n+1\,j}^\tau(t)$ is a constant process for every $1 \leq j \leq r(n+1)$. 
\item[(2)] $I^\mathrm{lib}_{\sigma_0}(\tau) < +\infty$ implies that for each fixed $1 \leq i \leq n$ and $t \geq 0$, we have 
$\pi_t^*(\tau)(P) = \sigma_0(P)$ for every non-commutative polynomial $P$ in indeterminates $x_{ij}$, $1 \leq j \leq r(i)$.  
\item[(3)] $I^\mathrm{lib}_{\sigma_0}(\tau) =0$ if and only if $\tau = \sigma_0^\mathrm{lib}$. Hence $\sigma_0^\mathrm{lib}$ is a unique minimizer of $I_{\sigma_0}^\mathrm{lib}$. 
\end{itemize}
\end{proposition}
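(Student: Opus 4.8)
The plan is to handle the three assertions in order, each time feeding a well-chosen test polynomial into Lemma~\ref{L5.3}. Throughout write $\alpha_T(P):=\tau^T(P)-\sigma_0^{\mathrm{lib}}(P)$ and $\beta_T(P):=\sum_{k=1}^n\int_0^T\big\Vert E_s^\tau\big((\mathfrak{D}_s^{(k)}P)(x_{\bullet\diamond}^{\tau^s}(\cdot),v_\bullet^\tau(\cdot))\big)\big\Vert_{\tau,2}^2\,\mathrm{d}s$, as in the proof of Lemma~\ref{L5.3}. The engine for (1) and (2) is the same: if $\mathfrak{D}_s^{(k)}P=0$ for every $k$ and $s$, then $\beta_T(P)\equiv0$, so (using $I^{\mathrm{lib}}_{\sigma_0}(\tau)<+\infty$ and the $0/0$ convention established in the proof of Lemma~\ref{L5.3}) one gets $\alpha_T(P)=0$, i.e.\ $\tau^T(P)=\sigma_0^{\mathrm{lib}}(P)$ for all $T\geq0$.

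For (1), take $P=(x_{n+1\,j}(t_1)-x_{n+1\,j}(t_2))^2$. By Definition~\ref{D3.1} the derivation $\delta_s^{(k)}$ (hence $\mathfrak{D}_s^{(k)}=\theta\circ\delta_s^{(k)}$) annihilates every letter $x_{ij}(t)$ with $i=n+1$, so $\mathfrak{D}_s^{(k)}P=0$; unwinding the construction of $\tau^T$ gives $\tau^T(P)=\tau\big((x_{n+1\,j}^\tau(t_1)-x_{n+1\,j}^\tau(t_2))^2\big)$, while $\sigma_0^{\mathrm{lib}}(P)=0$ since $x_{n+1\,j}^{\sigma_0^{\mathrm{lib}}}$ is a constant process, so $\Vert x_{n+1\,j}^\tau(t_1)-x_{n+1\,j}^\tau(t_2)\Vert_{\tau,2}=0$, and faithfulness of the lifted trace forces equality. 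For (2), fix $1\leq i\leq n$ and $t\geq0$, and for a self-adjoint polynomial $P$ in $x_{i1},\dots,x_{i\,r(i)}$ set $\widetilde P:=P(x_{i1}(t),\dots,x_{i\,r(i)}(t))$. The key point is that $\delta_s^{(k)}$ applied to a single-time monomial $w(t,\dots,t)=x_{ij_1}(t)\cdots x_{ij_m}(t)$ telescopes: by the Leibniz rule one finds $\delta_s^{(k)}(w(t,\dots,t))=\delta_{k,i}\mathbf{1}_{[0,t]}(s)\big(w(t,\dots,t)v_i(t-s)\otimes v_i(t-s)^*-v_i(t-s)\otimes v_i(t-s)^* w(t,\dots,t)\big)$, which $\theta$ sends to $0$. (Conceptually this is the infinitesimal form of the fact, stressed in the introduction, that each $\Xi_i^{\mathrm{lib}}(N)$ is a constant process in distribution.) Hence $\mathfrak{D}_s^{(k)}\widetilde P=0$, so $\tau^T(\widetilde P)=\sigma_0^{\mathrm{lib}}(\widetilde P)$; choosing $T\geq t$, the left-hand side is $\pi_t^*(\tau)(P)$ and the right-hand side equals $\widetilde\sigma_0\big(v_i(t)\,P(x_{i1}^{\sigma_0},\dots,x_{i\,r(i)}^{\sigma_0})\,v_i(t)^*\big)=\sigma_0(P)$ by traciality. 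Since a state is determined by its values on self-adjoint elements, $\pi_t^*(\tau)(P)=\sigma_0(P)$ for all polynomials $P$ in the $x_{ij}$, $1\leq j\leq r(i)$.

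For (3), first suppose $I^{\mathrm{lib}}_{\sigma_0}(\tau)=0$. By Lemma~\ref{L5.3} the supremum $\sup_{P,T}\alpha_T(P)^2/\big(2\beta_T(P)\big)$ (with the stated convention) is $0$; considering separately $\beta_T(P)=0$ and $\beta_T(P)>0$ forces $\alpha_T(P)=0$ for every $P=P^*$ and $T\geq0$, hence $\tau^T=\sigma_0^{\mathrm{lib}}$ for all $T$. Given any word $w$ in the $x_{ij}$ and times $t_1,\dots,t_\ell$, pick $T\geq\max_a t_a$; then $x_{ij}^{\tau^T}(t_a)=x_{ij}^\tau(t_a)$ for each $a$, whence $\tau(w(t_1,\dots,t_\ell))=\tau^T(w(t_1,\dots,t_\ell))=\sigma_0^{\mathrm{lib}}(w(t_1,\dots,t_\ell))$, so $\tau=\sigma_0^{\mathrm{lib}}$. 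Conversely, each term of the supremum defining $I^{\mathrm{lib}}_{\sigma_0}$ has the form $\alpha_T(P)-\tfrac12\beta_T(P)$ with $\beta_T(P)\geq0$, and the $P=0$ term is $0$; hence to get $I^{\mathrm{lib}}_{\sigma_0}(\sigma_0^{\mathrm{lib}})=0$ it suffices to show $(\sigma_0^{\mathrm{lib}})^T=\sigma_0^{\mathrm{lib}}$ in $TS^c$. I would prove this by checking that, with $v_i$ the free unitary Brownian motions used to build $\sigma_0^{\mathrm{lib}}$ and $\widetilde v_i$ the fresh ones used to form $(\sigma_0^{\mathrm{lib}})^T$, the process $u_i(t):=\widetilde v_i((t-T)\vee0)\,v_i(t\wedge T)$ is again an $n$-tuple of freely independent left free unitary Brownian motions, freely independent of the $x_{ij}^{\sigma_0}$; this uses stationarity and freeness of increments together with the free-convolution semigroup identity for the laws of free unitary Brownian motion. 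Since $x_{ij}^{(\sigma_0^{\mathrm{lib}})^T}(t)=u_i(t)x_{ij}^{\sigma_0}u_i(t)^*$ for $i\leq n$ and $=x_{n+1\,j}^{\sigma_0}$ for $i=n+1$, the joint distribution of $(x_{ij}^{(\sigma_0^{\mathrm{lib}})^T}(t))$ coincides with that of $(x_{ij}^{\sigma_0^{\mathrm{lib}}}(t))$, giving $(\sigma_0^{\mathrm{lib}})^T=\sigma_0^{\mathrm{lib}}$ and hence $I^{\mathrm{lib}}_{\sigma_0}(\sigma_0^{\mathrm{lib}})=0$. Together with $I^{\mathrm{lib}}_{\sigma_0}\geq0$ (again the $P=0$ term) and the equivalence just proved, $\sigma_0^{\mathrm{lib}}$ is the unique minimizer.

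The step I expect to demand the most care is the telescoping identity for $\delta_s^{(k)}$ on single-time monomials in part (2): it relies on expanding the derivation by the Leibniz rule and tracking the cancellations, a place where indexing errors easily creep in, and one must be careful that the substitution turning $\delta_s^{(k)}$-expressions into operators uses $x_{ij}^{\tau^s}(t)=v_i^\tau((t-s)\vee0)x_{ij}^\tau(t\wedge s)v_i^\tau((t-s)\vee0)^*$ correctly. A secondary delicate point is the free-independence bookkeeping in part (3) showing that $u_i$ is a free unitary Brownian motion jointly free from the initial data. Finally, a minor but recurring technical wrinkle is passing from $\Vert\,\cdot\,\Vert_{\tau,2}=0$ to an operator identity, which needs faithfulness of the ambient trace.
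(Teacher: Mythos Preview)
Your proof is correct and follows essentially the same approach as the paper: for (1) and (2) you feed test polynomials with $\mathfrak{D}_s^{(k)}P=0$ into Lemma~\ref{L5.3} (the paper cites \eqref{Eq23}--\eqref{Eq24} for (1) and simply asserts $\mathfrak{D}_s^{(k)}\pi_t(P)=0$ for (2), which is exactly your telescoping identity), and for (3) both you and the paper use $(\sigma_0^{\mathrm{lib}})^T=\sigma_0^{\mathrm{lib}}$ via the left increment property of free unitary Brownian motion together with Lemma~\ref{L5.3}. Your exposition is in fact slightly more explicit than the paper's on the telescoping step in (2).
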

\begin{proof} (1) By \eqref{Eq23} and \eqref{Eq24} we have $\Vert x_{n+1\,j}^\tau(t) - x_{n+1\,j}^\tau(0)\Vert_{\tau,2}^2 \leq \Vert x_{n+1\,j}^{\sigma_0^\mathrm{lib}}(t) - x_{n+1\,j}^{\sigma_0^\mathrm{lib}}(0)\Vert_{\sigma_0^\mathrm{lib},2}^2 = 
\Vert x_{n+1\,j}^{\sigma_0} - x_{n+1\,j}^{\sigma_0}\Vert_{\sigma_0,2}^2 = 0$. Hence $ x_{n+1\,j}^\tau(t) = x_{n+1\,j}^\tau(0)$ holds for every $t \geq 0$. 
 
\medskip
(2) Let $P$ be an arbitrary, non-commutative polynomial in indeterminates $x_{ij}$, $1 \leq j \leq r(i)$, with a fixed $1 \leq i \leq n$. It is easy to see that $\mathfrak{D}^{(k)}_s \pi_t(P) = 0$. Hence we have 
$$
r\big(\pi_t^*(\tau^T)(P) - \pi_t^*(\sigma_0^\mathrm{lib})(P)\big) = \tau^T(r\pi_t(P)) - \sigma_0^\mathrm{lib}(r\pi_t(P)) \leq I^\mathrm{lib}_{\sigma_0}(\tau) < +\infty
$$ 
for every $r \in \mathbb{R}$ and $T \geq 0$, and thus $\pi_t^*(\tau)(P) = \pi_t^*(\tau^T)(P) = \pi_t^*(\sigma_0^\mathrm{lib})(P) = \sigma_0(P)$ with $T$ large enough. 

\medskip
(3) By the left increment property of left free unitary Brownian motions (see \cite[Definition 2]{Biane:Fields97}), it is easy to see that $(\sigma_0^\mathrm{lib})^T = \sigma_0^\mathrm{lib}$ holds for every $T \geq 0$. Thus, we trivially obtain that
$$
I_{\sigma_0}^\mathrm{lib}(\sigma_0^\mathrm{lib}) = \sup_{\substack{T \geq 0 \\ P = P^* \in \mathbb{C}\langle x_{\bullet\diamond}(\cdot) \rangle}}\Bigg\{ - \int_0^T \sum_{k=1}^n \big\Vert E_s^{\sigma_0^\mathrm{lib}}\big((\mathfrak{D}_s^{(k)}P)(x_{\bullet\diamond}^{\tau^s}(\cdot),v_\bullet^\tau(\cdot))\big) \big\Vert_{\sigma_0^\mathrm{lib},2}^2\,\mathrm{d}s \Bigg\} = 0.
$$

Lemma \ref{L5.3} with its proof actually shows that $I_{\sigma_0}^\mathrm{lib}(\tau) = 0$ implies that
$$
0 \leq 
\frac{(\tau^T(P) - \sigma_0^\mathrm{lib}(P))^2}{2\sum_{k=1}^n \int_0^T \big\Vert E_s^\tau\big((\mathfrak{D}_s^{(k)}P)(x_{\bullet\diamond}^{\tau^s}(\cdot),v_\bullet^\tau(\cdot))\big) \big\Vert_{\tau,2}^2\,\mathrm{d}s} 
\leq I_{\sigma_0}^\mathrm{lib}(\tau) = 0
$$
(with convention $0/0 = 0$) for all $P = P^* \in \mathbb{C}\big\langle x_{\bullet\diamond}(\cdot)\big\rangle$ and $T \geq 0$. This (with the proviso in Lemma \ref{L5.3}) actually shows that $\tau^T(P) = \sigma_0^\mathrm{lib}(P)$ holds for every $P = P^* \in \mathbb{C}\big\langle x_{\bullet\diamond}(\cdot)\big\rangle$ and $T \geq 0$. This immediately implies that $\tau = \sigma_0^\mathrm{lib}$. 
\end{proof}

These properties actually show that $I_{\sigma_0}^\mathrm{lib}$ is indeed a `right' rate function for our purpose. Further analysis of this rate function $I_{\sigma_0}^\mathrm{lib}$ will be given in a sequel to this article. 

\subsection{Main results} We are ready to prove the next main result of this article. 

\begin{theorem}\label{T5.8} For every closed subset $\Lambda$ of $TS^c\big(C^*_R\big\langle x_{\bullet\diamond}(\cdot)\big\rangle\big)$ we have 
$$
\varlimsup_{N\to\infty}\frac{1}{N^2}\log\mathbb{P}\big(\tau_{\Xi^\mathrm{lib}(N)} \in \Lambda\big) \leq - \inf\big\{ I^\mathrm{lib}_{\sigma_0}(\tau)\,\big|\,\tau \in \Lambda \big\}.
$$
\end{theorem}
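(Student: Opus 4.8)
The plan is to follow the classical exponential-martingale route to large deviation upper bounds, as in Biane--Capitaine--Guionnet \cite{BianeCapitaineGuionnet:InventMath03}: first establish a \emph{local} upper bound around each state $\tau$, then patch these over compact sets, and finally upgrade from compact to closed sets using the exponential tightness of Corollary~\ref{C5.2}.

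\textbf{Local bound.} First I would prove that for every $\tau\in TS^c(C^*_R\langle x_{\bullet\diamond}(\cdot)\rangle)$,
$$
\inf_{\varepsilon>0}\ \varlimsup_{N\to\infty}\frac1{N^2}\log\mathbb{P}\big(\tau_{\Xi^\mathrm{lib}(N)}\in\mathcal{O}_\varepsilon(\tau)\big)\ \le\ -I^\mathrm{lib}_{\sigma_0}(\tau),
$$
with $\mathcal{O}_\varepsilon(\tau)=\{d(\tau_{\Xi^\mathrm{lib}(N)},\tau)<\varepsilon\}$ as in Theorem~\ref{T4.1}. Fix $P=P^*\in\mathbb{C}\langle x_{\bullet\diamond}(\cdot)\rangle$ and $T\ge0$. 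Since $\mathbb{E}[\mathrm{Exp}_N(T)]=1$ by Proposition~\ref{P3.2}, restricting the expectation to $\mathcal{O}_\varepsilon(\tau)$ gives
$$
1\ \ge\ \mathbb{E}\Big[\mathbf{1}_{\mathcal{O}_\varepsilon(\tau)}\exp\Big(N^2\big(\mathbb{E}[\tau_{\Xi^\mathrm{lib}(N)}(P)\mid\mathcal{F}_T]-\mathbb{E}[\tau_{\Xi^\mathrm{lib}(N)}(P)]-\tfrac12\sum_{k=1}^n\int_0^T\big\Vert\mathbb{E}[(\mathfrak{D}_s^{(k)}P)(\cdots)\mid\mathcal{F}_s]\big\Vert_{\mathrm{tr}_N,2}^2\,\mathrm{d}s\big)\Big)\Big].
$$
On $\mathcal{O}_\varepsilon(\tau)$ I would control the three terms of the exponent using the convergence results already established. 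Theorem~\ref{T4.1} with $m=1$ applied to $P_1=P$ (no $v$-letters), together with $\mathrm{tr}_N(\mathbb{E}[P_{1,N}^{(T)}\mid\mathcal{F}_T])=\mathbb{E}[\tau_{\Xi^\mathrm{lib}(N)}(P)\mid\mathcal{F}_T]$ and $\tau(E_T^\tau(P_1^{\tau^T}))=\tau(P(x^{\tau^T}_{\bullet\diamond}(\cdot)))=\tau^T(P)$ (by $\tau$-preservation of $E_T^\tau$ and the definition of $\tau^T$), gives $\mathbb{E}[\tau_{\Xi^\mathrm{lib}(N)}(P)\mid\mathcal{F}_T]\to\tau^T(P)$; Proposition~\ref{P2.3} gives $\mathbb{E}[\tau_{\Xi^\mathrm{lib}(N)}(P)]\to\sigma_0^\mathrm{lib}(P)$; and Corollary~\ref{C4.2} together with \eqref{Eq3}, \eqref{Eq22} and the bound on the integrand (uniform in $s$, since $\mathfrak{D}_s^{(k)}P$ is a fixed linear combination of monomials and all substituted matrices are norm-bounded, the relevant unitaries being unitary) yields convergence of the time integral to $\sum_{k=1}^n\int_0^T\Vert E_s^\tau((\mathfrak{D}_s^{(k)}P)(x^{\tau^s}_{\bullet\diamond}(\cdot),v^\tau_\bullet(\cdot)))\Vert_{\tau,2}^2\,\mathrm{d}s$ --- all three convergences in the norm $\Vert\,\cdot\,\Vert_{\mathcal{O}_\varepsilon(\tau)}$, with a combined error $\eta_{N,\varepsilon}$ satisfying $\inf_{\varepsilon>0}\varlimsup_N\Vert\eta_{N,\varepsilon}\Vert_{\mathcal{O}_\varepsilon(\tau)}=0$. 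The limiting exponent is exactly $N^2 I_{P,T}(\tau)$ with $I_{P,T}$ as in the proof of Proposition~\ref{P5.6} (a finite real number for fixed $P,T$), so bounding $\exp(\cdots)\ge\exp(N^2(I_{P,T}(\tau)-\Vert\eta_{N,\varepsilon}\Vert_{\mathcal{O}_\varepsilon(\tau)}))$ on $\mathcal{O}_\varepsilon(\tau)$ yields $\mathbb{P}(\mathcal{O}_\varepsilon(\tau))\le\exp(-N^2(I_{P,T}(\tau)-\Vert\eta_{N,\varepsilon}\Vert_{\mathcal{O}_\varepsilon(\tau)}))$. Taking $\varlimsup_N$, then $\inf_{\varepsilon>0}$, then $\sup_{P,T}$, and using $I^\mathrm{lib}_{\sigma_0}(\tau)=\sup_{P,T}I_{P,T}(\tau)$ from \eqref{Eq21}, gives the local bound (including the case $I^\mathrm{lib}_{\sigma_0}(\tau)=+\infty$).

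\textbf{Compact then closed sets.} Given the local bound, I would argue in the standard way. For $K$ compact and any $\lambda'<\inf_{K}I^\mathrm{lib}_{\sigma_0}$, each $\tau\in K$ admits $\varepsilon_\tau>0$ with $\varlimsup_N\frac1{N^2}\log\mathbb{P}(\tau_{\Xi^\mathrm{lib}(N)}\in\mathcal{O}_{\varepsilon_\tau}(\tau))<-\lambda'$; extracting a finite subcover and using $\frac1{N^2}\log(\sum_{i=1}^p a_i)\le\frac{\log p}{N^2}+\frac1{N^2}\log\max_i a_i$ gives $\varlimsup_N\frac1{N^2}\log\mathbb{P}(\tau_{\Xi^\mathrm{lib}(N)}\in K)\le-\lambda'$. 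For closed $\Lambda$ I may assume $\inf_\Lambda I^\mathrm{lib}_{\sigma_0}>0$; fix $\lambda'<\inf_\Lambda I^\mathrm{lib}_{\sigma_0}$. By Corollary~\ref{C5.2}, for every $L>0$ there is a compact $K_L$ with $\varlimsup_N\frac1{N^2}\log\mathbb{P}(\tau_{\Xi^\mathrm{lib}(N)}\notin K_L)\le-L$; applying the compact bound to $\Lambda\cap K_L$ (which has $\inf I^\mathrm{lib}_{\sigma_0}\ge\inf_\Lambda I^\mathrm{lib}_{\sigma_0}>\lambda'$) and splitting $\mathbb{P}(\tau_{\Xi^\mathrm{lib}(N)}\in\Lambda)\le\mathbb{P}(\tau_{\Xi^\mathrm{lib}(N)}\in\Lambda\cap K_L)+\mathbb{P}(\tau_{\Xi^\mathrm{lib}(N)}\notin K_L)$ gives $\varlimsup_N\frac1{N^2}\log\mathbb{P}(\tau_{\Xi^\mathrm{lib}(N)}\in\Lambda)\le\max(-\lambda',-L)$; letting $L\to\infty$ and $\lambda'\nearrow\inf_\Lambda I^\mathrm{lib}_{\sigma_0}$ completes the proof.

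\textbf{Main obstacle.} The substantive difficulties --- the Clark--Ocone identification of the exponential martingale (Proposition~\ref{P3.2}) and, above all, the \emph{time-uniform} convergence of the conditional expectations over the event $\mathcal{O}_\varepsilon(\tau)$ (Theorem~\ref{T4.1}, Corollary~\ref{C4.2}) --- have already been handled, so the remaining work is mostly bookkeeping. The one point needing genuine care is that on $\mathcal{O}_\varepsilon(\tau)$ the limit of $\mathbb{E}[\tau_{\Xi^\mathrm{lib}(N)}(P)\mid\mathcal{F}_T]$ is $\tau^T(P)$ and \emph{not} $\tau(P)$: this is exactly the $m=1$ case of Theorem~\ref{T4.1} combined with $\tau$-preservation of $E_T^\tau$, and it is what makes the martingale computation match the definition \eqref{Eq21} of $I^\mathrm{lib}_{\sigma_0}$. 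Ensuring all estimates are uniform on $\mathcal{O}_\varepsilon(\tau)$ in the $\Vert\,\cdot\,\Vert_{\mathcal{O}_\varepsilon(\tau)}$ sense, and the routine compact-to-closed reduction via exponential tightness, then finish the argument.
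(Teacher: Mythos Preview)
Your proposal is correct and follows essentially the same route as the paper: exponential tightness (Corollary~\ref{C5.2}) plus goodness of the rate function reduce the problem to the local (weak) upper bound, and the latter is obtained by inserting the exponential martingale of Proposition~\ref{P3.2}, bounding the exponent on $\mathcal{O}_\varepsilon(\tau)$ by $I_{P,T}(\tau)$ up to an error that vanishes via Proposition~\ref{P2.3} and Corollary~\ref{C4.2}, and then taking the supremum over $P,T$. Your explicit use of Theorem~\ref{T4.1} with $m=1$ to identify the limit of $\mathbb{E}[\tau_{\Xi^\mathrm{lib}(N)}(P)\mid\mathcal{F}_T]$ as $\tau^T(P)$, and your spelled-out compact-to-closed reduction, are details the paper leaves implicit (it simply cites \cite{DemboZeitouni:Book} for the latter), but the argument is the same.
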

\begin{proof}
Since the $\mathbb{P}(\tau_{\Xi^\mathrm{lib}(N)} \in \cdot)$ form an exponentially tight sequence of probability measures and $I^\mathrm{lib}_{\sigma_0}$ is a good rate function, it suffices to prove the following weak large deviation upper bound: 
$$
\varlimsup_{\varepsilon\searrow0}\varlimsup_{N\to\infty} \frac{1}{N^2}\log\mathbb{P}\big(d\big(\tau_{\Xi^\mathrm{lib}(N)},\tau\big) <\varepsilon\big) \leq - I^\mathrm{lib}_{\sigma_0}(\tau)
$$
for every $\tau \in TS^c\big(C^*_R\big\langle x_{\bullet\diamond}(\cdot)\big\rangle\big)$. (This is a standard fact in large deviation theory; see the proofs of \cite[Theorem 4.1.11, Lemma 1.2.18]{DemboZeitouni:Book}.)

Consider the random variable 
\begin{align*} 
I_{P,T,N} &:= \mathbb{E}\big[\tau_{\Xi^\mathrm{lib}(N)}(P)\mid\mathcal{F}_T\big] - \mathbb{E}\big[\tau_{\Xi^\mathrm{lib}(N)}(P)\big] \\
&\qquad- \frac{1}{2}\sum_{k=1}^n \int_0^T \big\Vert \mathbb{E}\big[(\mathfrak{D}_s^{(k)}P)(\xi_{\bullet\diamond}^\mathrm{lib}(N), U_N^{(\bullet)}(\cdot+s)U_N^{(\bullet)}(s)^*)\mid\mathcal{F}_s\big] \big\Vert_{\mathrm{tr}_N,2}^2\,\mathrm{d}s.
\end{align*}
By Proposition \ref{P3.2} we have
\begin{equation}\label{Eq25}
\mathbb{E}[\exp(N^2 I_{P,T,N})] = \mathbb{E}[\exp(N^2 I_{P,0,N})]  = 1.
\end{equation} 

Let $I_{P,T}(\tau)$ be as in the proof of Proposition \ref{P5.6}. We have
\begin{align*} 
\mathbb{P}\big(d\big(\tau_{\Xi^\mathrm{lib}(N)},\tau\big) < \varepsilon\big) 
&= 
\mathbb{E}\Big[\mathbf{1}_{\big\{d\big(\tau_{\Xi^\mathrm{lib}(N)},\tau\big) < \varepsilon\big\}} \exp(N^2 I_{P,T,N} - N^2 I_{P,T,N})\Big] \\
&\leq 
\mathbb{E}\Big[\mathbf{1}_{\big\{d\big(\tau_{\Xi^\mathrm{lib}(N)},\tau\big) < \varepsilon\big\}} \exp(N^2 I_{P,T,N})\Big] \\
&\qquad\times\mathrm{esssup}\big\{\exp(- N^2 I_{P,T,N})\,\big|\, d\big(\tau_{\Xi^\mathrm{lib}(N)},\tau\big) < \varepsilon\big\} \\
&\leq 
\mathrm{esssup}\big\{\exp(- N^2 I_{P,T,N})\,\big|\, d\big(\tau_{\Xi^\mathrm{lib}(N)},\tau\big) < \varepsilon\big\} \quad \text{(use \eqref{Eq25})} \\
&= 
\exp\Big(-N^2\mathrm{essinf}\big\{ I_{P,T,N}\,\big|\, d\big(\tau_{\Xi^\mathrm{lib}(N)},\tau\big) < \varepsilon\big\}\Big).
\end{align*}
Observe that 
\begin{align*}
I_{P,T,N} 
&\geq I_{P,T}(\tau) - |I_{P,T,N} - I_{P,T}(\tau)| \\
&\geq I_{P,T}(\tau) - \mathrm{esssup}\big\{|I_{P,T,N} - I_{P,T}(\tau)|\,\big|\,d\big(\tau_{\Xi^\mathrm{lib}(N)},\tau\big)<\varepsilon\big\}
\end{align*}
holds almost surely on $\big\{d\big(\tau_{\Xi^\mathrm{lib}(N)},\tau\big)<\varepsilon\big\}$.  Therefore, we conclude that 
$$
\frac{1}{N^2}\log\mathbb{P}\big(d\big(\tau_{\Xi^\mathrm{lib}(N)},\tau\big) < \varepsilon\big) 
\leq 
-I_{P,T}(\tau) + \mathrm{esssup}\big\{|I_{P,T,N} - I_{P,T}(\tau)|\,\big|\,d\big(\tau_{\Xi^\mathrm{lib}(N)},\tau\big)<\varepsilon\big\}.   
$$
Then Proposition \ref{P2.3} and Corollary \ref{C4.2} (together with \eqref{Eq3} and \eqref{Eq22}) show that 
$$
\varlimsup_{\varepsilon\searrow0}\varlimsup_{N\to\infty} \mathrm{esssup}\Big\{ \big|I_{P,T,N}  - I_{P,T}(\tau)\big| \,\Big|\, d\big(\tau_{\Xi^\mathrm{lib}(N)},\tau\big) < \varepsilon \Big\} = 0,  
$$
and hence
$$
\varlimsup_{\varepsilon\searrow0}\varlimsup_{N\to\infty}\frac{1}{N^2}\log\mathbb{P}\big(d\big(\tau_{\Xi^\mathrm{lib}(N)},\tau\big) < \varepsilon\big) 
\leq 
-I_{P,T}(\tau)
$$
for every $P = P^* \in \mathbb{C}\big\langle x_{\bullet\diamond}(\cdot)\big\rangle$ and $T \geq 0$. Hence we are done. 
\end{proof}

Here is a standard application of the above large deviation upper bound and Proposition \ref{P5.7}(3). 

\begin{corollary}\label{C5.9} We have $\lim_{N\to\infty}d\big(\tau_{\Xi^\mathrm{lib}(N)},\sigma_0^\mathrm{lib}\big) = 0$ almost surely. 
\end{corollary}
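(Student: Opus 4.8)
The plan is to run the standard Borel--Cantelli argument, feeding in the large deviation upper bound (Theorem \ref{T5.8}) together with the fact that $\sigma_0^\mathrm{lib}$ is the unique minimizer of the good rate function $I^\mathrm{lib}_{\sigma_0}$ (Propositions \ref{P5.6} and \ref{P5.7}(3)). First I would fix $\varepsilon>0$ and consider the set $\Lambda_\varepsilon := \big\{ \tau \in TS^c\big(C^*_R\big\langle x_{\bullet\diamond}(\cdot)\big\rangle\big) \mid d(\tau,\sigma_0^\mathrm{lib}) \geq \varepsilon \big\}$, which is closed since $d$ is a metric (Lemma \ref{L2.2}(1)); then Theorem \ref{T5.8} gives $\varlimsup_{N\to\infty}\frac{1}{N^2}\log\mathbb{P}\big(\tau_{\Xi^\mathrm{lib}(N)}\in\Lambda_\varepsilon\big) \leq -c_\varepsilon$ with $c_\varepsilon := \inf\{ I^\mathrm{lib}_{\sigma_0}(\tau)\mid\tau\in\Lambda_\varepsilon\}$.

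The key point, and the only thing that is not purely bookkeeping, is to show $c_\varepsilon>0$. If $\Lambda_\varepsilon$ is disjoint from the level set $\{I^\mathrm{lib}_{\sigma_0}\leq1\}$ then $c_\varepsilon\geq1$; otherwise $\Lambda_\varepsilon\cap\{I^\mathrm{lib}_{\sigma_0}\leq1\}$ is a nonempty closed subset of the compact level set $\{I^\mathrm{lib}_{\sigma_0}\leq1\}$ (this is where goodness, Proposition \ref{P5.6}, is used), hence compact, so the lower semicontinuous functional $I^\mathrm{lib}_{\sigma_0}$ attains its infimum on it at some $\tau_\star\in\Lambda_\varepsilon$; since $\sigma_0^\mathrm{lib}\notin\Lambda_\varepsilon$ and $\sigma_0^\mathrm{lib}$ is the \emph{unique} minimizer of $I^\mathrm{lib}_{\sigma_0}$ (Proposition \ref{P5.7}(3)), necessarily $c_\varepsilon=I^\mathrm{lib}_{\sigma_0}(\tau_\star)>0$.

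Given $c_\varepsilon>0$, there is $N_\varepsilon$ so that $\mathbb{P}\big(d(\tau_{\Xi^\mathrm{lib}(N)},\sigma_0^\mathrm{lib})\geq\varepsilon\big)\leq \mathrm{e}^{-N^2 c_\varepsilon/2}$ for all $N\geq N_\varepsilon$, so the probabilities are summable in $N$; the Borel--Cantelli lemma then yields $\varlimsup_{N\to\infty} d(\tau_{\Xi^\mathrm{lib}(N)},\sigma_0^\mathrm{lib})\leq\varepsilon$ almost surely. Finally I would apply this for $\varepsilon=1/m$, $m\in\mathbb{N}$, and intersect the resulting countable family of almost sure events to conclude $\varlimsup_{N\to\infty}d(\tau_{\Xi^\mathrm{lib}(N)},\sigma_0^\mathrm{lib})=0$ almost surely, which is the claim. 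I do not expect a genuine obstacle here: the whole content is concentrated in the positivity $c_\varepsilon>0$, which is exactly what the combination ``good rate function + unique minimizer'' is designed to deliver, and everything else (closedness of $\Lambda_\varepsilon$, summability, the countable intersection) is routine.
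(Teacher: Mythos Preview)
Your proposal is correct and follows essentially the same route as the paper's proof: apply Theorem \ref{T5.8} to the closed set $\{d(\,\cdot\,,\sigma_0^\mathrm{lib})\geq\varepsilon\}$, use Propositions \ref{P5.6} and \ref{P5.7}(3) to get strict positivity of the infimum, and finish with Borel--Cantelli. If anything, you are more explicit than the paper in justifying $c_\varepsilon>0$ via compactness of the level set and in handling the final countable intersection over $\varepsilon=1/m$.
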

\begin{proof} Let $\varepsilon > 0$ be arbitrarily chosen. By Proposition \ref{P5.6} and Proposition \ref{P5.7}(3) we observe that $\inf\{I_{\sigma_0}^\mathrm{lib}(\tau)\mid d(\tau,\sigma_0^\mathrm{lib}) \geq \varepsilon\} \gneqq 0$. Then, Theorem \ref{T5.8} implies that 
$$
\varlimsup_{N\to\infty}\frac{1}{N^2}\log\mathbb{P}(d(\tau_{\Xi^\mathrm{lib}(N)},\sigma_0^\mathrm{lib}) \geq \varepsilon) \leq -\inf\{I_{\sigma_0}^\mathrm{lib}(\tau)\mid d(\tau,\sigma_0^\mathrm{lib}) \geq \varepsilon\} \lneqq 0. 
$$
Thus we obtain that $\sum_{N=1}^\infty \mathbb{P}(d(\tau_{\Xi^\mathrm{lib}(N)},\sigma_0^\mathrm{lib}) \geq \varepsilon) < +\infty$. Hence the desired assertion follows by the Borel--Cantelli lemma. 
\end{proof}

\section{Discussions} 

One of the motivations in mind is to provide a common basis for the study of Voiculescu's approach (\cite{Voiculescu:AdvMath99}) and our orbital approach (\cite{HiaiMiyamotoUeda:IJM09},\cite{Ueda:IUMJ14}) to the concept of mutual information in free probability. In fact, the key ingredient of Voiculescu's approach is the liberation process, while the orbital approach involves `orbital microstates' by unitary matrices. Thus, a serious lack was a random matrix counterpart of liberation process, whose candidate we introduced in this article. Here we are not going to any detailed discussions about such a study, but only give some comments on it.

\medskip
We may apply the contraction principle in large deviation theory to our large deviation upper bound obtained in section 5. 

\begin{corollary}\label{C6.1} Let $\nu_{N,T}$ be the marginal probability distribution on $\mathrm{U}(N)$ of the $N\times N$ left unitary Brownian motion at time $T > 0$. Define 
$$
I_{\sigma_0,T}^\mathrm{lib}(\sigma) := \inf\big\{ I_{\sigma_0}^\mathrm{lib}(\tau)\,\big|\, \pi_T^*(\tau) = \sigma \big\}, \qquad \sigma \in TS\big(C^*_R\big\langle x_{\bullet\diamond}\big\rangle\big).  
$$
Then for any closed subset $\Lambda$ of $TS\big(C^*_R\big\langle x_{\bullet\diamond}\big\rangle\big)$ we have 
\begin{align*}
\varlimsup_{N\to\infty}\frac{1}{N^2}\log \nu_{N,T}^{\otimes n}\Big(\big\{ \mathbf{U} \in \mathrm{U}(N)^n\,\big|\, \mathrm{tr}^{\Xi(N)}_\mathbf{U}\in \Lambda\big\} \Big)
\leq 
-\inf\big\{ I_{\sigma_0,T}^\mathrm{lib}(\sigma) \mid \sigma \in \Lambda \big\}.    
\end{align*}
Here, $\mathrm{tr}^{\Xi(N)}_\mathbf{U} \in TS\big(C^*_R\big\langle x_{\bullet\diamond}\big\rangle\big)$ with $\mathbf{U} = (U_i)_{i=1}^n \in \mathrm{U}(N)^n$ is defined by 
$\mathrm{tr}^{\Xi(N)}_\mathbf{U}(P) := \mathrm{tr}_N(\Phi_\mathbf{U}(P))$, $\quad P \in \mathbb{C}\big\langle x_{\bullet\diamond} \big\rangle$, where $\Phi_\mathbf{U} : \mathbb{C}\big\langle x_{\bullet\diamond} \big\rangle \to M_N(\mathbb{C})$ is a unique $*$-homomorphism sending $x_{ij}$ $(1 \leq i \leq n)$ to $U_i\xi_{ij}(N)U_i^*$ and $x_{n+1\,j}$ to $\xi_{n+1\,j}(N)$. 
\end{corollary}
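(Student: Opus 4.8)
The plan is to derive this as a straightforward consequence of Theorem \ref{T5.8} by pushing forward along the continuous map $\pi_T^*$ (the easy, ``image'' half of the contraction principle). First I would record the bookkeeping identity connecting the two sides. Let $\mathbf{U}(N) := \big(U_N^{(1)}(T),\dots,U_N^{(n)}(T)\big) \in \mathrm{U}(N)^n$; by independence of the left unitary Brownian motions and $U_N^{(i)}(0) = I_N$, the law of $\mathbf{U}(N)$ is exactly $\nu_{N,T}^{\otimes n}$. Unwinding the definitions of $\pi_T$, $\tau_{\Xi^\mathrm{lib}(N)}$ and $\xi_{ij}^\mathrm{lib}(N)(T)$ shows that $\pi_T^*\big(\tau_{\Xi^\mathrm{lib}(N)}\big)(x_{ij}) = \mathrm{tr}_N\big(\xi_{ij}^\mathrm{lib}(N)(T)\big)$ equals $\mathrm{tr}_N\big(U_N^{(i)}(T)\xi_{ij}(N)U_N^{(i)}(T)^*\big) = \mathrm{tr}_N\big(\Phi_{\mathbf{U}(N)}(x_{ij})\big)$ for $1 \leq i \leq n$ and $\mathrm{tr}_N\big(\xi_{n+1\,j}(N)\big) = \mathrm{tr}_N\big(\Phi_{\mathbf{U}(N)}(x_{n+1\,j})\big)$ for $i = n+1$; by multiplicativity these agree on all words, so $\pi_T^*\big(\tau_{\Xi^\mathrm{lib}(N)}\big) = \mathrm{tr}^{\Xi(N)}_{\mathbf{U}(N)}$ as random elements of $TS\big(C^*_R\langle x_{\bullet\diamond}\rangle\big)$.

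Next I would translate the event. Since $\mathbf{U} \mapsto \mathrm{tr}^{\Xi(N)}_\mathbf{U}$ is continuous from $\mathrm{U}(N)^n$ into $TS\big(C^*_R\langle x_{\bullet\diamond}\rangle\big)$, the set $\{\mathrm{tr}^{\Xi(N)}_\mathbf{U} \in \Lambda\}$ is closed, hence $\nu_{N,T}^{\otimes n}$-measurable, and the previous paragraph gives
$$
\nu_{N,T}^{\otimes n}\big(\big\{\mathbf{U} \in \mathrm{U}(N)^n \,\big|\, \mathrm{tr}^{\Xi(N)}_\mathbf{U} \in \Lambda\big\}\big) = \mathbb{P}\big(\pi_T^*(\tau_{\Xi^\mathrm{lib}(N)}) \in \Lambda\big) = \mathbb{P}\big(\tau_{\Xi^\mathrm{lib}(N)} \in (\pi_T^*)^{-1}(\Lambda)\big).
$$
Because $\pi_T^* : TS^c\big(C^*_R\langle x_{\bullet\diamond}(\cdot)\rangle\big) \to TS\big(C^*_R\langle x_{\bullet\diamond}\rangle\big)$ is continuous (section 2, item \textbf{1}), $(\pi_T^*)^{-1}(\Lambda)$ is a closed subset of $TS^c\big(C^*_R\langle x_{\bullet\diamond}(\cdot)\rangle\big)$, so Theorem \ref{T5.8} applies to it and yields
$$
\varlimsup_{N\to\infty}\frac{1}{N^2}\log \nu_{N,T}^{\otimes n}\big(\{\mathrm{tr}^{\Xi(N)}_\mathbf{U} \in \Lambda\}\big) \leq -\inf\big\{ I^\mathrm{lib}_{\sigma_0}(\tau) \,\big|\, \tau \in (\pi_T^*)^{-1}(\Lambda)\big\}.
$$

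Finally I would rewrite the right-hand side as a fibered infimum:
$$
\inf\big\{ I^\mathrm{lib}_{\sigma_0}(\tau) \,\big|\, \pi_T^*(\tau) \in \Lambda\big\} = \inf_{\sigma \in \Lambda}\ \inf\big\{ I^\mathrm{lib}_{\sigma_0}(\tau) \,\big|\, \pi_T^*(\tau) = \sigma\big\} = \inf_{\sigma \in \Lambda} I_{\sigma_0,T}^\mathrm{lib}(\sigma),
$$
which is the asserted bound. There is no real obstacle here; the only points that need (routine) care are the identification $\pi_T^*(\tau_{\Xi^\mathrm{lib}(N)}) = \mathrm{tr}^{\Xi(N)}_{\mathbf{U}(N)}$ in law, the measurability of the event in question, and the continuity of $\pi_T^*$ already established in section 2.
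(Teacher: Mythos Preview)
Your argument is correct and is exactly the contraction principle applied to Theorem \ref{T5.8} via the continuous map $\pi_T^*$, which is precisely what the paper indicates (it only states that the corollary follows from the contraction principle without spelling out the details). The identifications $\pi_T^*\big(\tau_{\Xi^\mathrm{lib}(N)}\big) = \mathrm{tr}^{\Xi(N)}_{\mathbf{U}(N)}$ and the fibered rewriting of the infimum are the right bookkeeping, and nothing more is needed.
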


We write 
$$
\chi_\mathrm{orb}^T(\sigma) 
:= 
\lim_{\substack{m\to\infty \\ \delta \searrow 0}}\varlimsup_{N\to\infty}\frac{1}{N^2}\log \nu_{N,T}^{\otimes n}\Big(\big\{ \mathbf{U} \in \mathrm{U}(N)^n\,\big|\, \mathrm{tr}^{\Xi(N)}_\mathbf{U}\in \mathcal{O}_{m,\delta}(\sigma) \big\} \Big),
$$
where $\mathcal{O}_{m,\delta}(\sigma)$, $m \in \mathbb{N}$, $\delta > 0$, denotes the (open) subset of $\sigma' \in TS\big(C^*_R\big\langle x_{\bullet\diamond}\big\rangle\big)$ such that $\big|\sigma'(x_{i_1 j_1} \cdots x_{i_p j_p}) - \sigma(x_{i_1 j_1} \cdots x_{i_p j_p})\big| < \delta$ whenever $1 \leq i_k \leq n+1$, $1 \leq j_k \leq r(i_k)$, $1 \leq k \leq p$ and $1 \leq p \leq m$. 

\medskip
A problem in this direction is to show that $\chi_\mathrm{orb}(\sigma) \leq \varliminf_{T\to+\infty}\chi_\mathrm{orb}^T(\sigma)$ holds, where $\chi_\mathrm{orb}(\sigma)$ denotes the orbital free entropy of the random multi-variables $(x_{ij})_{1 \leq j \leq r(i)}$, $1 \leq i \leq n$, under $\sigma$ (see \cite{HiaiMiyamotoUeda:IJM09},\cite{Ueda:IUMJ14}). If this was the case, then we would obtain that $\chi_\mathrm{orb}(\sigma) = \lim_{T\to+\infty}\chi_\mathrm{orb}^T(\sigma)$ (see below) and $\chi_\mathrm{orb}(\sigma) \leq - \varlimsup_{T\to+\infty} I_{\sigma_0,T}^\mathrm{lib}(\sigma)$. Remark that, if the families $\{x_{ij}\}_{1\leq j \leq r(i)}$, $1 \leq i \leq n$, are freely independent under $\sigma_0$, then it is easy to see that $\pi_T^*(\sigma_0^\mathrm{lib}) = \sigma_0$ for all $T \geq 0$, and hence Proposition \ref{P5.7}(3) shows that $I_{\sigma_0,T}^\mathrm{lib}(\sigma_0) = 0$ for all $T \geq 0$ so that  $\chi_\mathrm{orb}(\sigma_0) = - I_{\sigma_0,T}^\mathrm{lib}(\sigma_0)$ holds as $0 = 0$ for all $T \geq 0$. Thus our conjecture seems plausible. 

\medskip
Here we would like to point out that 
$$
\lim_{T\to+\infty}\lim_{N\to\infty} \frac{1}{N^2}\log\max \left\{ \frac{d\nu_{N,T}}{d\nu_N}(U)\,\Big|\, U \in \mathrm{U}(N) \right\} = 
 \lim_{T\to+\infty}\lim_{N\to\infty} \frac{1}{N^2}\log\frac{d\nu_{N,T}}{d\nu_N}(I_N) = 0
$$
with the Haar probability measure $\nu_N$ on $\mathrm{U}(N)$ follows from the formula obtained precisely by L\'{e}vy and M\"{a}ida \cite[Proposition 4.2; Lemma 4.7; Proposition 5.2]{LevyMaida:ESAIM:Proc15} with the aid of the fact that 
$$
K(k) = \int_0^1 \frac{\mathrm{d}s}{\sqrt{(1-s^2)(1-k^2 s^2)}} = -\frac{1}{2}\log(1-k) + \frac{3}{2}\log2 + o(1) \quad (\text{as $k\nearrow 1$}). 
$$
Thus, for any Borel subset $\Lambda$ of $TS\big(C^*_R\big\langle x_{\bullet\diamond}\big\rangle\big)$ we have 
\begin{align*}
&\frac{1}{N^2}\log \nu_{N,T}^{\otimes n}\Big(\big\{ \mathbf{U} \in \mathrm{U}(N)^n\,\big|\, \mathrm{tr}^{\Xi(N)}_\mathbf{U}\in \Lambda \big\} \Big) \\
&\leq  
\frac{1}{N^2}\log \nu_N^{\otimes n}\Big(\big\{ \mathbf{U} \in \mathrm{U}(N)^n\,\big|\, \mathrm{tr}^{\Xi(N)}_\mathbf{U}\in \Lambda \big\} \Big) 
+ \frac{n}{N^2}\log \frac{d\nu_{N,T}}{d\nu_N}(I_N), 
\end{align*} 
implying that $\varlimsup_{T\to\infty}\chi^T_\mathrm{orb}(\sigma) \leq \chi_\mathrm{orb}(\sigma)$ (use \cite[Remark 3.3]{Ueda:Preprint16} at this point). On the other hand, with
$$
L := \varliminf_{T\to+\infty}\varliminf_{N\to\infty} \frac{1}{N^2}\log\min \left\{ \frac{d\nu_{N,T}}{d\nu_N}(U)\,\Big|\, U \in \mathrm{U}(N) \right\} (\leq 0),
$$
a similar consideration as above shows that $\varliminf_{T\to\infty}\chi^T_\mathrm{orb}(\sigma) \geq \chi_\mathrm{orb}(\sigma) + nL$. Hence the problem is whether $L = 0$ or not. We have confirmed this in the affirmative too, and will give a further study on the orbital free entropy in a subsequent paper.

\section*{Acknowledgement} We would like to express our sincere gratitude to the referee for his/her very careful reading of this paper and pointing out a mistake in the original proof of exponential tightness. 

}

\end{document}